	\def\ps@pprintTitle{%
	\let\@oddhead\@empty
	\let\@evenhead\@empty
	\def\@oddfoot{\centerline{\thepage}}%
	\let\@evenfoot\@oddfoot}
	\renewcommand\section{\@startsection {section}{1}{\z@}%
	           {-3.5ex \@plus -1ex \@minus -.2ex}%
	           {2.3ex \@plus.2ex}%
	           {\normalfont\large\bfseries}}
	\renewcommand\subsection{\@startsection{subsection}{2}{\z@}%
	           {-3.25ex\@plus -1ex \@minus -.2ex}%
	           {1.5ex \@plus .2ex}%
	           {\normalsize\bfseries\itshape}}
\newcommand{\eps}{\varepsilon}
\newcommand{\mb}[1]{\mathbf{#1}}
\newcommand{\warrow}{\overset{w}{\longrightarrow}}
\newcommand{\ccdot}{\kern-.12em\cdot\kern-.12em}
\newcommand\restrict[1]{\raisebox{-.5ex}{$|$}_{#1}}
\DeclareMathOperator*{\esssup}{ess\,sup}
	\@ifdefinable\@latex@chi{\let\@latex@chi\chi}
	\renewcommand*\chi{{\@latex@chi\smash[t]{\mathstrut}}} 
\newtheoremstyle{slplain}
  {0.4cm}
  {0.4cm}
  {\upshape}
  {}
  {\bfseries}
  {.}
  { }
  {}
\newtheoremstyle{itplain}
    {0.4cm}
    {0.4cm}
    {\itshape}
    {}
    {\bfseries}
    {.}
    { }
    {}
\declaretheorem[style=slplain,numberwithin=section]{definition}
\declaretheorem[style=slplain,sibling=definition]{example}
\declaretheorem[style=slplain,sibling=definition]{remark}
\declaretheorem[style=itplain,sibling=definition]{theorem}
\declaretheorem[style=itplain,sibling=definition]{proposition}
\declaretheorem[style=itplain,sibling=definition]{lemma}
\declaretheorem[style=itplain,sibling=definition]{corollary}
\patchcmd{\MaketitleBox}{\footnotesize\itshape\elsaddress\par\vskip36pt}{\footnotesize\itshape\elsaddress\par\parbox[b][36pt]{\linewidth}{\vfill\hfill\textnormal{\today}\hfill\null\vfill}}{}{}%
\patchcmd{\pprintMaketitle}{\footnotesize\itshape\elsaddress\par\vskip36pt}{\footnotesize\itshape\elsaddress\par\parbox[b][36pt]{\linewidth}{\vfill\hfill\textnormal{\today}\hfill\null\vfill}}{}{}%
\begin{document}

\begin{frontmatter}

\title{\mbox{ }\\[0.6cm] Lévy processes with respect to the index Whittaker convolution\vspace{5pt}}

\author[myaddress1]{Rúben Sousa\corref{mycorrespondingauthor}}
\cortext[mycorrespondingauthor]{Corresponding author}
\ead{rubensousa@fc.up.pt}

\author[myaddress2]{Manuel Guerra}
\ead{mguerra@iseg.ulisboa.pt}

\author[myaddress1]{Semyon Yakubovich}
\ead{syakubov@fc.up.pt}

\address[myaddress1]{CMUP, Departamento de Matemática, Faculdade de Ciências, Universidade do Porto,\linebreak Rua do Campo Alegre 687, 4169-007 Porto, Portugal\vspace{2pt}}

\address[myaddress2]{CEMAPRE and ISEG (School of Economics and Management), Universidade de Lisboa,\linebreak Rua do Quelhas, 1200-781 Lisbon, Portugal}

\begin{abstract}
The index Whittaker convolution operator, recently introduced by the authors, gives rise to a convolution measure algebra having the property that the convolution of probability measures is a probability measure. In this paper, we introduce the class of Lévy processes with respect to the index Whittaker convolution and study their basic properties. We prove that the square root of the Shiryaev process belongs to our family of Lévy process, and this is shown to yield a martingale characterization of the Shiryaev process analogous to Lévy's characterization of Brownian motion.
	 
Our results demonstrate that a nice theory of Lévy processes with respect to generalized convolutions can be developed even if the usual compactness assumption on the support of the convolution fails, shedding light into the connection between the properties of the convolution algebra and the nature of the singularities of the associated differential operator.
\end{abstract}

\begin{keyword}
Lévy process \sep Index Whittaker transform \sep Generalized convolution \sep Shiryaev process \sep Martingale characterization \sep Infinitely divisible distributions
\end{keyword}

\end{frontmatter}

\section{Introduction}

The \textit{index Whittaker transform} is the integral transform (of index type) defined by
\begin{equation} \label{eq:intro_Whit_transf}
(\mathrm{W}_{\!\alpha} g)(\tau) := \int_0^\infty g(x) W_{\alpha, i\tau}(x)\, x^{-2} dx, \qquad \tau \geq 0
\end{equation}
where $i$ is the imaginary unit, $\alpha < {1 \over 2}$ is a parameter and $W_{\alpha, \nu}(x)$ is the Whittaker function of the second kind. This transformation first appeared in \cite{wimp1964} as a particular case of an integral transform having the Meijer-G function in the kernel. Various results on the $L_p$ theory, such as the Plancherel theorem, were established in \cite{srivastava1998}. In its general form, the index Whittaker is connected with the Asian option pricing problem in mathematical finance \cite{linetsky2004}. Furthermore, it includes as a particular case the Kontorovich-Lebedev transform, which is one of the most well-known index transforms and has a wide range of applications in physics \cite{sousayakubovich2017,yakubovichluchko1994,yakubovich1996}. The index Whittaker transform is, up to a simple transformation (see \cite{sousayakubovich2017}), the Sturm-Liouville type integral transform associated with the eigenfunction expansion of the differential operator
\begin{equation} \label{eq:iw_intro_Lop}
\mathcal{L} = - {1 \over 4} \Bigl[x^2 {d^2 \over dx^2} + \bigl(x^{-1} + (3-4\alpha) x\bigr){d \over dx} \Bigr].
\end{equation}
The operator $\mathcal{L}$ is also the negative of the generator of the \emph{index Whittaker diffusion}, defined as the square root of the so-called Shiryaev process, which is the solution of a stochastic differential equation derived by Shiryaev \cite{shiryaev1961} in the context of quickest detection problems and has various applications in fields such as sequential analysis and mathematical finance \cite{donatimartin2001,peskir2006}.

Given an integral transformation for which a Plancherel theorem holds, it is natural to study whether the integral transformation can be used to define a convolution operator inducing a Banach algebra structure in the space of finite complex Borel measures \cite{litvinov1987}. If, additionally, the convolution of probability measures is always a probability measure, it becomes meaningful to ask if a reasonable theory of infinitely divisible probability distributions with respect to the convolution can be developed. One of the purposes of this paper is to show that the \emph{index Whittaker convolution} of (probability) measures, in the form introduced in Section \ref{chap:iw_convalgebra}, provides an affirmative answer to these questions.

Another important goal of this paper is to introduce and develop the concepts of convolution semigroups, Lévy processes and Gaussian processes with respect to the index Whittaker convolution. We show that these Lévy processes constitute a family of Feller processes whose $L_2$-generator can be explicitly described; we also establish a Lévy-Khintchine representation for this class of Lévy processes, as well as a general martingale characterization. It turns out that the index Whittaker diffusion is a Gaussian process with respect to the index Whittaker convolution, and one of our main results (Theorem \ref{thm:iw_moment_diffusion_martchar}) gives an explicit martingale characterization of the index Whittaker diffusion which is analogous to Lévy's characterization of Brownian motion. This result can be restated as a Lévy-type characterization of the Shiryaev process. Actually, a simple change of variables in the definition of the generalized convolution would lead to a convolution algebra whose class of Lévy processes includes the Shiryaev diffusion process; our choice of definition is a matter of analytical convenience.

Some of our proofs are based on the Laplace-type integral representation for the Whittaker function, which is established in Section \ref{chap:laplacerep}. This integral representation is new and of independent interest.

In a recent manuscript \cite{sousaetal2018}, the authors derived a product formula for the Whittaker function $W_{\alpha, \nu}(x)$ whose kernel does not depend on the second parameter $\nu$ and is given in closed form in terms of the parabolic cylinder function. In the same work, this product formula was used to define the index Whittaker convolution of complex-valued functions. Here, in order to address the probabilistic properties of this convolution, we use a modified form of the same product formula to extend the index Whittaker convolution to the space of all finite measures.



The concepts of infinitely divisible distributions and Lévy processes with respect to generalized convolutions are not new, as much work has been done in the framework of hypergroups \cite{bloomheyer1995, rentzschvoit2000} and other abstract algebraic structures (see \cite{borowieckaolszewska2015} for work in the context of Urbanik convolution algebras). A generalized convolution $*$ on the half-line $[0,\infty)$ is said to define a hypergroup structure if $\delta_x * \delta_y$ is a probability measure for all $x,y \geq 0$ (here $\delta_x$ is the Dirac measure at $x$) and if it satisfies a set of axioms: associativity, continuity, existence of identity element, existence of an involution, and the following compactness condition: \vspace{0.5\topsep}
\begin{enumerate}[leftmargin=1.25\leftmargini]
\item[\textbf{(H\textsubscript{C})}] For each $x, y \geq 0$, the support $\mathrm{supp}(\delta_x * \delta_y)$ is compact, and the mapping $(x,y) \mapsto \mathrm{supp}(\delta_x * \delta_y)$ is continuous
\end{enumerate} \vspace{-0.5\topsep}
(the axioms are given in full in \cite[Section 1.1]{bloomheyer1995}). The hypergroup axioms allow for the development of a substantial theory of harmonic analysis which includes various results on infinitely divisible distributions and the associated stochastic processes; in fact, many of our results on processes related to the index Whittaker convolution are analogous to known results for Sturm-Liouville hypergroups on the half-line. The index Whittaker convolution, however, is not covered by the theory of hypergroups because it violates the compactness condition (H\textsubscript{C}).

As in the case of Sturm-Liouville hypergroups on $[0,\infty)$, the index Whittaker convolution $\delta_x * \delta_y$ is defined so that $(x,y) \mapsto \int_{[0,\infty)\!} f(\xi) (\delta_x * \delta_y)(d\xi)$ is a solution of the Cauchy problem
\[
\mathcal{L}_x u(x,y) = \mathcal{L}_y u(x,y), \qquad u(x,0) = u(0,x) = f(x), \qquad {\partial u \over \partial y}(x,0) = {\partial u \over \partial x}(0,y) = 0
\]
where, in our framework, $\mathcal{L}_x$ and $\mathcal{L}_y$ denote the differential operator \eqref{eq:iw_intro_Lop} acting on the variable $x$ and $y$ respectively; in Sturm-Liouville hypergroups the operator is instead of the form $\mathfrak{L} = -{d^2 \over dx^2} - {A'(x) \over A(x)} {d \over dx}$, where $A$ is a positive function satisfying a set of assumptions (given in \cite[Section 2]{zeuner1992}) which make it natural to regard the operator $\mathfrak{L}$ as a perturbed Bessel operator (see \cite{chebli1995}). The crucial difference between the two settings is connected with the usual classification of second-order partial differential equations, cf.\ e.g.\ \cite{bitsadze1964}: while in the hypergroup case the equation $\mathfrak{L}_x u = \mathfrak{L}_y u$ is uniformly hyperbolic on $(0,\infty)^2$, this is no longer true when $\mathfrak{L}$ is replaced by the operator \eqref{eq:iw_intro_Lop}, because in this case the equation $\mathcal{L}_x u = \mathcal{L}_y u$ is hyperbolic inside $(0,\infty)^2$ but has a parabolic degeneracy along the boundary of the positive quadrant. Therefore, while the compactness of $\mathrm{supp}(\delta_x * \delta_y)$ in Sturm-Liouville hypergroups reflects the finite speed of propagation feature of hyperbolic equations \cite[Proposition 3.7]{zeuner1992}, the property $\mathrm{supp}(\delta_x * \delta_y) = [0,\infty)\,$ ($x, y > 0$) of the index Whittaker convolution is a natural consequence of the instantaneous propagation phenomenon for parabolic equations. As far as the authors are aware, Sturm-Liouville operators leading to Cauchy problems with initial condition on the boundary of parabolic degeneracy have never been treated in the hypergroup-related literature. Our work thus demonstrates that the uniform hyperbolicity condition is not indispensable for building the generalized class of Lévy processes associated with (the generator of) a given one-dimensional diffusion process.

The outline of the paper is as follows: Section \ref{chap:preliminaries} sets notation and states some facts about the kernel of the index Whittaker transform. In Section \ref{chap:laplacerep} we establish the Laplace-type integral representation for the Whittaker function. Section \ref{chap:iw_convalgebra} is devoted to the index Whittaker convolution measure algebra: we extend the index Whittaker transform to (probability) measures, construct the generalized translation and convolution operators and study their main properties. The infinite divisibility of probability measures with respect to this convolution is investigated in Section \ref{chap:iw_infdiv}, where we establish a Lévy-Khintchine representation in which the index Whittaker transform plays a role parallel to that of the ordinary characteristic function in the classical Lévy-Khintchine formula. Finally, Lévy and Gaussian processes with respect to the index Whittaker convolution are addressed in Section \ref{chap:iw_levyprocess}, which contains the main results of this work.

\section{Preliminaries} \label{chap:preliminaries}

The following notation will be used throughout this article: $\mathrm{C}^k(I)$ denotes the space of $k$ times continuously differentiable functions on an interval $I$; $\mathrm{C}_\mathrm{c}^k(I)$ is its subspace of compactly supported functions; the spaces of bounded continuous functions and of continuous functions vanishing at infinity are denoted by  $\mathrm{C}_\mathrm{b}(I)$ and $\mathrm{C}_0(I)$, respectively. For a weight function $w$ defined on a set $E$, $L_p(E; w(x)\, dx)$ denotes the weighted $L_p$-space with norm
\[
\|f\|_{\scalebox{0.65}{$L_p(E; w(x)\, dx)\!$}} = \biggl( \int_E |f(x)|^p w(x)dx \biggr)^{\! 1/p} \quad (1 \leq p < \infty), \qquad\quad \|f\|_{\scalebox{0.65}{$L_\infty(E; w(x)\, dx)$}} = \esssup_{x \in E} |f(x)|.
\]
The space of probability (respectively, finite positive, finite complex) Borel measures on an interval $I$ will be denoted by $\mathcal{P}(I)$ (respectively, $\mathcal{M}_{b}(I)$, $\mathcal{M}_{\mathbb{C}}(I)$).

We define $\bm{W}_{\!\alpha,\nu}(x)$ as the following function of confluent hypergeometric type:
\begin{equation} \label{eq:bW_def}
\bm{W}_{\!\alpha,\nu}(x) := 2^\alpha x^{2\alpha} e^{1 \over 4x^2} W_{\alpha,\nu}\Bigl({1 \over 2x^2}\Bigr) = \bigl(2x^2\bigr)^{-{1 \over 2} + \alpha + \nu} \Psi\Bigl({1 \over 2} - \alpha - \nu, 1-2\nu; {1 \over 2x^2}\Bigr) \qquad (x > 0)
\end{equation}
where $W_{\alpha,\nu}(z)$ is the Whittaker function of the second kind \cite[\S13.14]{dlmf}, $\Psi(a,b;z)$ is the confluent hypergeometric function of the second kind or Tricomi function \cite[Chapter VI]{erdelyiI1953}, and $\alpha \in (-\infty,{1 \over 2})$, $\nu \in \mathbb{C}$ are parameters. Unless stated otherwise, the parameter $\alpha < {1 \over 2}$ is held fixed throughout the discussion.

By transformation of the Whittaker differential equation (see \cite[\S13.14(i)]{dlmf}), the function $\bm{W}_{\!\alpha,\nu}(x)$ is a solution of the differential equation $\mathcal{L} w = \bigl((\tfrac{1}{2}-\alpha)^2 - \nu^2\bigr) w$, where $\mathcal{L}$ is the operator \eqref{eq:iw_intro_Lop}, i.e.
\begin{equation} \label{eq:iw_Lop}
\mathcal{L} := -{1 \over 4}\biggl(x^2 {d^2 \over dx^2} + {[x^2 \mathrm{m}(x)]' \over \mathrm{m}(x)} {d \over dx} \biggr)
\end{equation}
being $\mathrm{m}$ the function
\begin{equation} \label{eq:iw_mweight}
\mathrm{m}(x) := x^{1-4\alpha} e^{-{1 \over 2x^2}\!}.
\end{equation}
It follows from \cite[Equation 13.19.3]{dlmf} that the limiting behavior of its derivatives as $x \to 0$ is given by
\begin{equation} \label{eq:bW_limderivatives}
\begin{aligned}
{d^{2n} \over dx^{2n}} \bm{W}_{\alpha,\nu}(x) & \xrightarrow[x \to 0\,]{} (-1)^n 2^{3n} \pi^{-{1 \over 2}} \Gamma(\tfrac{1}{2} + n) (\tfrac{1}{2} - \alpha + \nu)_n (\tfrac{1}{2} - \alpha - \nu)_n \\
{d^{2n+1} \over dx^{2n+1}} \bm{W}_{\alpha,\nu}(x) & \xrightarrow[x \to 0\,]{} 0
\end{aligned} \qquad (n \in \mathbb{N}_0)
\end{equation}
where $\Gamma(\cdot)$ is the Gamma function \cite[Chapter I]{erdelyiI1953} and $(a)_n$ is the Pochhammer symbol, $(a)_0 = 1$ and $(a)_n = \prod_{j=0}^{n-1} (a+j)$ for $n \in \mathbb{N}$. In particular, the function $\bm{W}_{\!\alpha,\nu}(x)$ extends continuously to $x = 0$ by setting $\bm{W}_{\!\alpha,\nu}(0) \equiv 1$. In addition, we have \cite[Equation 13.18.2]{dlmf}
\begin{equation} \label{eq:bW_equal1_param}
\bm{W}_{\!\alpha,{1 \over 2} - \alpha}(x) = 1 \qquad \text{for all } x > 0.
\end{equation}
Concerning the limiting behavior as $x \to \infty$, it is given by
\begin{equation} \label{eq:bW_liminfty}
\begin{aligned}
\bm{W}_{\alpha,\pm \nu}(x) & \sim {2^{-{1 \over 2} + \alpha + \nu\,} \Gamma(2\nu) \over \Gamma({1 \over 2} - \alpha + \nu)} x^{-1+2(\alpha + \nu)}, & \mathrm{Re}\,\nu > 0, \\
\bm{W}_{\alpha,\pm \nu}(x) & = O\bigl( x^{-1+2(\alpha + \mathrm{Re}\,\nu)} \bigr), & \mathrm{Re}\,\nu \geq 0, \nu \neq 0, \\
\bm{W}_{\alpha,0}(x) & = O\bigl( x^{-1+2\alpha}\log x \bigr).
\end{aligned}
\end{equation}
as can be seen using \cite[\S13.14(iii)]{dlmf}. In particular, $\bm{W}_{\!\alpha,\nu}(x) \xrightarrow[\,x \to \infty\,]{} 0 \;$ for each $\nu$ in the strip $|\mathrm{Re}\,\nu| < {1 \over 2} - \alpha$. We shall also make use of the following asymptotic expansion \cite[Theorem 1.11]{yakubovich1996}, which holds uniformly in $x \in [\eps, \infty)$:
\begin{equation} \label{eq:bW_itau_asymp}
\bm{W}_{\!\alpha,i\tau}(x) = 2^\alpha x^{2\alpha-1} \tau^{\alpha - {1 \over 2}} \exp\Bigl(\frac{1}{4x^2}-\frac{\pi\tau}{2}\Bigr) \cos\bigl[\tau\log(8\tau x^2) - \tau - \tfrac{\pi}{2}(\tfrac{1}{2} - \alpha) \bigr] \Bigl(1+O(\tau^{-1})\Bigr), \quad\; \tau \to +\infty.
\end{equation}

\section{A Laplace-type integral representation for the Whittaker function} \label{chap:laplacerep}

The following theorem gives an integral representation for the confluent hypergeometric-type function \eqref{eq:bW_def} which is, to the best of our knowledge, new; in particular, it cannot be found in standard references such as \cite{prudnikovI1986,prudnikovII1986,prudnikovIII1989}. We will call it the \emph{Laplace-type representation} for $\bm{W}_{\!\alpha, \nu}(x)$ because it is of the same form as the Laplace representation for the characters of Sturm-Liouville hypergroups, cf.\ \cite[(4.7)--(4.8)]{zeuner1992}.

\begin{theorem} \label{thm:bW_laplacerep}
The confluent hypergeometric-type function \eqref{eq:bW_def} admits the integral representation
\begin{equation} \label{eq:bW_laplacerep}
\begin{aligned}
\bm{W}_{\!\alpha, \nu}(x) = \int_{-\infty\!}^\infty e^{\nu s} \eta_x(s) ds = 2 \! \int_0^{\infty\!} \cosh(\nu s) \eta_x(s) ds \qquad (\alpha, \nu \in \mathbb{C},\; x > 0)
\end{aligned}
\end{equation}
where $\eta_x$ is the nonnegative function defined by
\[
\eta_x(s) := 2^{-{3 \over 2}} \pi^{-{1 \over 2}} x^{-1 + 2\alpha} \exp\biggl({1 \over 2x^2}-{1 \over 4x^2} \cosh^2\Bigl({s \over 2}\Bigr) \biggr) D_{2\alpha}\biggl( {1 \over x} \cosh\Bigl({s \over 2}\Bigr) \biggr)
\]
being $D_\mu(z)$ the parabolic cylinder function \cite[Chapter VIII]{erdelyiII1953}.
\end{theorem}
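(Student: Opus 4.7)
The plan is to show that both sides of \eqref{eq:bW_laplacerep}, viewed as functions of $x$, solve the same Sturm-Liouville ODE $\mathcal{L}w = ((\tfrac{1}{2}-\alpha)^2 - \nu^2)w$ and share the same boundary value at the singular endpoint $x = 0$. Set $I(x,\nu) := \int_{-\infty}^\infty e^{\nu s}\eta_x(s)\, ds$. The large-argument asymptotics $D_{2\alpha}(z) \sim z^{2\alpha}e^{-z^2/4}$ yield the bound $\eta_x(s) \lesssim \cosh^{2\alpha}(s/2)\exp(-\sinh^2(s/2)/(2x^2))$ as $|s|\to\infty$, so $\eta_x$ has super-exponential decay and $I$, as well as all its $x$-derivatives, converge absolutely and locally uniformly. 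This justifies differentiation under the integral sign.

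The crux of the argument is the separation-type identity
\[
\mathcal{L}_x \eta_x(s) + \partial_s^2 \eta_x(s) = (\tfrac{1}{2}-\alpha)^2 \eta_x(s).
\]
I would derive it by writing $\eta_x(s) = c\, x^{-1+2\alpha}\, e^{h(x,s)}\, D_{2\alpha}(g(x,s))$ with $g := \cosh(s/2)/x$ and $h := 1/(2x^2) - g^2/4$, and applying $\mathcal{L}_x$ and $\partial_s^2$ through the chain rule. The simple relations $g_x = -g/x$ and $g_{ss} = g/4$ keep the bookkeeping manageable, and every occurrence of $D_{2\alpha}''$ is eliminated via the Weber equation $D_{2\alpha}''(z) = (z^2/4 - 2\alpha - \tfrac{1}{2}) D_{2\alpha}(z)$. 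After systematic but admittedly lengthy algebraic cancellation, the claimed identity emerges. This verification is the main technical obstacle of the proof; no conceptual issue arises, only unavoidable differentiation and bookkeeping.

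Granted the identity, two integrations by parts in $s$ (boundary terms vanishing by the Gaussian decay) give
\[
\mathcal{L}_x I(x,\nu) = \bigl((\tfrac{1}{2}-\alpha)^2 - \nu^2\bigr) I(x,\nu),
\]
so $I(\cdot,\nu)$ satisfies the same ODE as $\bm{W}_{\alpha,\nu}$. A Laplace-type argument, based on the substitution $u = \sinh(s/2)/x$ and the pointwise asymptotics of $\eta_x$ established in the first paragraph, shows that $I(x,\nu) \to (2\pi)^{-1/2}\int_{-\infty}^\infty e^{-u^2/2}\, du = 1$ as $x\to 0^+$, matching $\bm{W}_{\alpha,\nu}(0) = 1$ from \eqref{eq:bW_limderivatives}. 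Combined with the evenness $\eta_x(-s) = \eta_x(s)$ (which forces $I$ to be even in $\nu$) and the standard uniqueness at the regular-singular endpoint $x = 0$, this identifies $I(x,\nu) = \bm{W}_{\alpha,\nu}(x)$ for all $x > 0$.

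Finally, the asserted nonnegativity of $\eta_x$ reduces to $D_{2\alpha}(z) \geq 0$ for $z > 0$: for $\alpha \leq 0$ this is immediate from the Laplace-type representation $D_{2\alpha}(z) = \frac{e^{-z^2/4}}{\Gamma(-2\alpha)}\int_0^\infty e^{-zt - t^2/2}\, t^{-2\alpha - 1}\, dt$, while for $0 < \alpha < \tfrac{1}{2}$ the parameter $2\alpha$ stays strictly below the first positive zero of $D_{2\alpha}$, so the sign is preserved throughout.
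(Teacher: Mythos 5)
Your proof is essentially correct, but it follows a genuinely different route from the paper's. The paper proves \eqref{eq:bW_laplacerep} by combining two tabulated integrals: one (Prudnikov 2.4.18.12 together with 2.19.4.7) identifying a Laplace transform of $\bm{W}_{\alpha,\nu}(x^{1/2})$ with $2K_{2\nu}(\xi)=\tfrac12\int e^{\nu s}e^{-\xi\cosh(s/2)}ds$, and another (Prudnikov 2.11.4.4) writing $\xi^{2\alpha-1}e^{-\xi\cosh(s/2)}$ as a Laplace transform with a parabolic-cylinder kernel; Fubini plus injectivity of the Laplace transform then gives the formula for real $\nu$ and $\alpha<\tfrac12$, and a separate analytic-continuation step (with explicit bounds on $D_{2\alpha+n}$) extends it to all complex parameters. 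You instead verify directly that $I(x,\nu)=\int e^{\nu s}\eta_x(s)\,ds$ solves $\mathcal Lw=((\tfrac12-\alpha)^2-\nu^2)w$ via the separation identity $\mathcal L_x\eta_x+\partial_s^2\eta_x=(\tfrac12-\alpha)^2\eta_x$ and matches $\bm W_{\alpha,\nu}$ at the singular endpoint — the standard strategy for Laplace representations of Sturm--Liouville characters (cf.\ Zeuner, Ch\'ebli). I checked your key identity: writing $\eta_x=c\,x^{-1+2\alpha}e^{1/(2x^2)}\Phi(g)$ with $g=\cosh(s/2)/x$ and $\Phi(g)=e^{-g^2/4}D_{2\alpha}(g)$, Weber's equation becomes $\Phi''+g\Phi'+(2\alpha+1)\Phi=0$, and the computation collapses to $\mathcal L_x\eta+\partial_s^2\eta=\tfrac14 cA\{(1-2\alpha)^2\Phi-x^{-2}[\Phi''+g\Phi'+(1+2\alpha)\Phi]\}=(\tfrac12-\alpha)^2\eta$ — so the ``lengthy algebra'' you deferred does close, and the doubly exponential decay of $\eta_x$ in $s$ kills the boundary terms in the two integrations by parts. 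Two caveats: first, you must actually perform (or at least record) this cancellation, since it is the crux of your argument; second, $x=0$ is an \emph{irregular} singular point of $\mathcal Lw=\lambda w$, not a regular-singular one as you state — your uniqueness step survives because the second solution blows up like $e^{1/(2x^2)}$ (as one sees from $\int^x y^{4\alpha-3}e^{1/(2y^2)}dy$ in the case $\lambda=0$), so boundedness near $0$ plus the limit $I(x,\nu)\to1$ (which your substitution $u=\sinh(s/2)/x$ does establish) pins down $I=\bm W_{\alpha,\nu}$, but the justification is the one-dimensionality of the recessive solution space, not Frobenius theory. What each approach buys: the paper's is short given the tables but needs a separate continuation argument; yours is self-contained, treats all complex $\alpha,\nu$ uniformly, and makes transparent \emph{why} the kernel $\eta_x$ works (it intertwines $\mathcal L_x$ with $(\tfrac12-\alpha)^2-\partial_s^2$), at the cost of the explicit verification and a more delicate endpoint analysis.
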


\begin{proof}
Only the first equality in \eqref{eq:bW_laplacerep} needs proof. Let us temporarily assume that $\nu \geq 0$ and $-\infty < \alpha < {1 \over 2}$, and let $\xi > 0$. We begin by noting the identity
\begin{equation} \label{eq:tilW_laplacerep_pf1}
\xi^{1-2\alpha} \! \int_0^\infty \exp\biggl(-{\xi^2 \over 2}x - {1 \over 2x} \biggr) x^{-2\alpha} \bm{W}_{\alpha,\nu}\bigl(x^{1 \over 2}\bigr) \, dx = 2K_{2\nu}(\xi) = {1 \over 2} \int_{-\infty}^\infty e^{\nu s} \exp\Bigl(-\xi \cosh\Bigl({s \over 2}\Bigr)\Bigr) \, ds.
\end{equation}
which is a consequence of integrals 2.4.18.12 in \cite{prudnikovI1986} and 2.19.4.7 in \cite{prudnikovIII1989}. Here $K_\nu(x)$ denotes the modified Bessel function of the second kind \cite[Chapter VII]{erdelyiII1953}. To deduce the theorem from this identity, we will use the injectivity property of the Laplace transform, after rewriting the right-hand side as an iterated integral. To that end, we point out that, according to integral 2.11.4.4 in \cite{prudnikovII1986}, for $s, \xi > 0$ we have
\[
\xi^{2\alpha - 1} \exp\Bigl(-\xi \cosh\Bigl({s \over 2}\Bigr)\Bigr) = (2\pi)^{-{1 \over 2}} \! \int_0^\infty \exp\biggl(-{\xi^2 \over 2}x - {1 \over 4x} \cosh^2\Bigl({s \over 2}\Bigr) \biggr) x^{-{1 \over 2} - \alpha} D_{2\alpha}\Bigl( x^{-{1 \over 2}} \cosh\Bigl({s \over 2}\Bigr) \Bigr) dx
\]
Substituting in \eqref{eq:tilW_laplacerep_pf1} and interchanging the order of integration (which is valid since $D_\mu(y) > 0$ for $y > 0$ and $\mu < 1$, cf.\ \cite[Equation 12.5.3]{dlmf}, and therefore the integrand is positive), we find that
\begin{align*}
& \int_0^\infty \exp\biggl(-{\xi^2 \over 2}x - {1 \over 2x} \biggr) x^{-2\alpha\,} \bm{W}_{\alpha,\nu}\bigl(x^{1 \over 2}\bigr) \, dx = \\
& \qquad\quad = 2^{-{3 \over 2}} \pi^{-{1 \over 2}} \! \int_0^\infty \exp\biggl(-{\xi^2 \over 2}x \biggr) x^{-{1 \over 2} - \alpha} \! \int_{-\infty}^\infty \exp\biggl(\nu s - {1 \over 4x} \cosh^2\Bigl({s \over 2}\Bigr)\biggr) D_{2\alpha}\Bigl( x^{-{1 \over 2}} \cosh\Bigl({s \over 2}\Bigr) \Bigr) ds \, dx
\end{align*}
Given that the Laplace transform is one-to-one, this identity yields
\[
e^{-{1 \over 2x}} \bm{W}_{\alpha,\nu}\bigl(x^{1 \over 2}\bigr) = 2^{-{3 \over 2}} \pi^{-{1 \over 2}} x^{-{1 \over 2} + \alpha} \! \int_{-\infty}^\infty \exp\biggl(\nu s - {1 \over 4x} \cosh^2\Bigl({s \over 2}\Bigr)\biggr) D_{2\alpha}\Bigl( x^{-{1 \over 2}} \cosh\Bigl({s \over 2}\Bigr) \Bigr) ds,
\]
finishing the proof for the case $-\infty < \alpha < {1 \over 2}$, $\nu \in \mathbb{R}$.

To extend \eqref{eq:bW_laplacerep} to all $\alpha, \nu \in \mathbb{C}$, it is enough to show that $\int_{-\infty\!}^\infty e^{\nu s} \eta_x(s) ds$ is an entire function of the parameter $\alpha$ and the parameter $\nu$ (so that the usual analytic continuation argument can be applied). For $t > 0$ and $\alpha \in \mathbb{C}$ with $\mathrm{Re}\,\alpha \leq 0$, the integral representation \cite[Equation 12.5.3]{dlmf} gives
\begin{align*}
\bigl|D_{2\alpha}(t)\bigr| & = {e^{-{t^2 \over 4}}\, t^{2\, \mathrm{Re}\,\alpha} \over |\Gamma({1 \over 2} - \alpha)|}  \biggl| \int_0^\infty e^{-s} s^{-{1 \over 2} - \alpha} \Bigl( 1+{2s \over t^2} \Bigr)^{\!\alpha} ds \biggr| \\
& \leq {e^{-{t^2 \over 4}}\, t^{2\, \mathrm{Re}\,\alpha} \over |\Gamma({1 \over 2} - \alpha)|} \int_0^\infty e^{-s} s^{-{1 \over 2} - \mathrm{Re}\,\alpha} \Bigl( 1+{2s \over t^2} \Bigr)^{\!\mathrm{Re}\,\alpha} ds \\
& \leq {e^{-{t^2 \over 4}}\, t^{2\, \mathrm{Re}\,\alpha} \over |\Gamma({1 \over 2} - \alpha)|} \int_0^\infty e^{-s} s^{-{1 \over 2} - \mathrm{Re}\,\alpha} ds \\
& = {\Gamma({1 \over 2} - \mathrm{Re}\,\alpha) \over |\Gamma({1 \over 2} - \alpha)|} e^{-{t^2 \over 4}}\, t^{2\, \mathrm{Re}\,\alpha}.
\end{align*}
Furthermore, from the recurrence relation \cite[Equation 8.2.(14)]{erdelyiII1953} it follows that for each $n \in \mathbb{N}_0$ we have $D_{2\alpha + n}(t) = p_{n,\alpha}(t) D_{2\alpha}(t) + q_{n,\alpha}(t) D_{2\alpha-1}(t)$, being $p_{n,\alpha}(\cdot)$, $q_{n,\alpha}(\cdot)$ polynomials of degree at most $n$ whose coefficients are continuous functions of $\alpha$. It is easy to see that $|p_{n,\alpha}(t)|, |q_{n,\alpha}(t)| \leq C_n(\alpha) \, (1+t^n)$ for some function $C_n(\alpha)$ that depends continuously on $\alpha \in \mathbb{C}$ and, consequently,
\begin{align}
\nonumber & \bigl|D_{2\alpha+n}(t)\bigr| \leq C_n(\alpha) \, (1+t^n) \bigl[D_{2\alpha}(t) + D_{2\alpha-1}(t)\bigr] \\
\nonumber & \qquad\qquad\quad\! \leq C_n(\alpha) \, e^{-{t^2 \over 4}}\, t^{2\, \mathrm{Re}\,\alpha} (1+t^n) \biggl[{\Gamma({1 \over 2} - \mathrm{Re}\,\alpha) \over |\Gamma({1 \over 2} - \alpha)|} + {\Gamma(1 - \mathrm{Re}\,\alpha) \over |\Gamma(1 - \alpha)|} t^{-1} \biggr], \\[2pt]
\label{eq:tilW_laplacerep_pf2} & \sup_{\substack{|\alpha| \leq M \\ \mathrm{Re}\, \alpha \leq 0}} \bigl|D_{2\alpha+n}(t)\bigr| \leq C_{M,n} \, e^{-{t^2 \over 4}} (t^{-2M-1} + t^n)
\end{align}
where $M > 0$ and $n \in \mathbb{N}_0$ are arbitrary and the constant $C_{M,n}$ depends on $M$ and $n$. Using \eqref{eq:tilW_laplacerep_pf2}, we see that
\[
\sup_{(\alpha,\nu) \in \mathcal{R}_{\mathsmaller{M}}} \int_{-\infty}^\infty \biggl| \, \exp\biggl(\nu s -{1 \over 4x} \cosh^2\Bigl({s \over 2}\Bigr) \biggr) D_{2\alpha+n}\Bigl( x^{-{1 \over 2}\!} \cosh\Bigl({s \over 2}\Bigr) \Bigr) \biggr| \, ds < \infty
\]
where $\mathcal{R}_{\mathsmaller{M}} = \bigl\{ (\alpha,\nu): |\alpha| \leq M,\; \mathrm{Re}\,\alpha \leq 0,\; |\nu| \leq M \bigr\}$. Applying the standard results on the analyticity of parameter-dependent integrals (e.g.\ \cite{mattner2001}), we obtain the entireness in $\alpha$ and in $\nu$ of  $\int_{-\infty\!}^\infty e^{\nu s} \eta_x(s) ds$, completing the proof.
\end{proof}

In what follows we again assume that $\alpha < {1 \over 2}$. A straightforward consequence of Theorem \ref{thm:bW_laplacerep} is that $|\bm{W}_{\!\alpha,\nu}(x)| \leq \bm{W}_{\!\alpha,\nu_0}(x)$ whenever $|\mathrm{Re}\, \nu| \leq \nu_0$ ($\nu_0 \geq 0$). Together with \eqref{eq:bW_equal1_param}, this implies that
\begin{equation} \label{eq:bW_ineq_param}
|\bm{W}_{\!\alpha,\nu}(x)| \leq 1 \qquad\;\; \text{for all } x > 0 \text{ and } \nu \text{ in the strip } |\mathrm{Re}\,\nu| \leq \tfrac{1}{2} - \alpha.
\end{equation}
The following lemma will also be useful:

\begin{lemma} \label{lem:bW_ineq_1minusW}
The confluent hypergeometric-type function \eqref{eq:bW_def} satisfies the inequality
\[
1-\bm{W}_{\!\alpha,\nu}(x) \leq 2\bigl((\tfrac{1}{2}-\alpha)^2 - \nu^2\bigr)	\, x^2 \qquad \text{for each } x \geq 0 \text{ and } \nu \in [0,\tfrac{1}{2} - \alpha] \, \cup \, i\, \mathbb{R}.
\]
\end{lemma}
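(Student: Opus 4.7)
My plan is to exploit the Sturm-Liouville form of the eigenvalue equation satisfied by $\bm{W}_{\!\alpha,\nu}$. Writing $\mathcal{L}u = -\frac{1}{4\mathrm{m}(x)}\bigl(x^2\mathrm{m}(x)u'(x)\bigr)'$ and setting $\lambda := (\tfrac{1}{2}-\alpha)^2-\nu^2$, the equation $\mathcal{L}\bm{W}_{\!\alpha,\nu} = \lambda \bm{W}_{\!\alpha,\nu}$ is equivalent to $\bigl(x^2\mathrm{m}(x)\bm{W}_{\!\alpha,\nu}'(x)\bigr)' = -4\lambda\,\mathrm{m}(x)\bm{W}_{\!\alpha,\nu}(x)$. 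Using $\bm{W}_{\!\alpha,\nu}(0)=1$, $\bm{W}_{\!\alpha,\nu}'(0)=0$ from \eqref{eq:bW_limderivatives}, together with the super-exponential vanishing of $\mathrm{m}(x)=x^{1-4\alpha}e^{-1/(2x^2)}$ at the origin (which kills the boundary terms), I integrate twice to obtain
\[
1-\bm{W}_{\!\alpha,\nu}(x) \;=\; 4\lambda\int_0^x \frac{1}{y^2\mathrm{m}(y)}\int_0^y \mathrm{m}(t)\bm{W}_{\!\alpha,\nu}(t)\,dt\,dy.
\]

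For $\nu$ in the stated set one has $\lambda\geq 0$, and Theorem~\ref{thm:bW_laplacerep} shows that $\bm{W}_{\!\alpha,\nu}$ is real-valued (its Laplace representation becomes $\int\cosh(\nu s)\eta_x$ or $\int\cos(\tau s)\eta_x$); combined with \eqref{eq:bW_ineq_param} this gives $\bm{W}_{\!\alpha,\nu}(t)\leq 1$, so the identity above yields
\[
1-\bm{W}_{\!\alpha,\nu}(x) \;\leq\; 4\lambda\,F(x), \qquad F(x):=\int_0^x\frac{1}{y^2\mathrm{m}(y)}\int_0^y\mathrm{m}(t)\,dt\,dy.
\]
The proof thus reduces to the weight-free estimate $F(x)\leq x^2/2$. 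Let $G(x) := x^2/2 - F(x)$. A direct computation gives $\mathcal{L}(x^2) = -\tfrac{1}{2}-2(1-\alpha)x^2$, while $(x^2\mathrm{m}(x)F'(x))' = \mathrm{m}(x)$ by construction of $F$, so $\mathcal{L}F \equiv -\tfrac{1}{4}$. Hence $\mathcal{L}G = -(1-\alpha)x^2$, equivalently $\bigl(x^2\mathrm{m}(x)G'(x)\bigr)' = 4(1-\alpha)x^2\mathrm{m}(x)\geq 0$.

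Since $G(0)=G'(0)=0$ and $x^2\mathrm{m}(x)G'(x)\to 0$ as $x\downarrow 0$, integrating once gives $x^2\mathrm{m}(x)G'(x) = 4(1-\alpha)\int_0^x t^2\mathrm{m}(t)\,dt \geq 0$ (here I use $\alpha<\tfrac{1}{2}$), so $G'\geq 0$ on $[0,\infty)$ and therefore $G\geq 0$. Combining with the earlier inequality yields $1-\bm{W}_{\!\alpha,\nu}(x)\leq 4\lambda\cdot x^2/2 = 2\lambda x^2$, which is the claim. The step requiring the most care is the auxiliary bound $F(x)\leq x^2/2$: because the weight $\mathrm{m}$ has an essential singularity at the origin, the vanishing of the boundary terms at $x=0$ (used both to derive the integral form of the eigenvalue equation and to integrate $(x^2\mathrm{m}(x)G'(x))'$) has to be extracted carefully from the specific decay properties of $\mathrm{m}$; beyond this bookkeeping the argument is just a one-dimensional maximum-principle for $\mathcal{L}$.
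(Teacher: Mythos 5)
Your proof is correct, and it follows the paper's own argument for the first half: both derive the integral identity $1-\bm{W}_{\!\alpha,\nu}(x) = 4\lambda\int_0^x \frac{1}{y^2\mathrm{m}(y)}\int_0^y \mathrm{m}(\xi)\bm{W}_{\!\alpha,\nu}(\xi)\,d\xi\,dy$ from the Sturm--Liouville form of the eigenvalue equation and the boundary data \eqref{eq:bW_limderivatives}, and both then replace $\bm{W}_{\!\alpha,\nu}$ by $1$ in the inner integral (you are right to flag that this needs $\bm{W}_{\!\alpha,\nu}$ real-valued and $\leq 1$ on the stated parameter set, which the paper leaves implicit). Where you diverge is in the final estimate $F(x)\leq x^2/2$: the paper gets this by the elementary pointwise bound $y^{4\alpha-3}\xi^{1-4\alpha} = y\,\xi^{-3}(\xi/y)^{4(1-\alpha)}\leq y\,\xi^{-3}$ for $0<\xi\leq y$, after which the double integral evaluates exactly to $x^2/2$ because $\xi^{-3}e^{-1/(2\xi^2)}$ is the derivative of $e^{-1/(2\xi^2)}$; you instead run a one-dimensional comparison argument, computing $\mathcal{L}(x^2)$ and $\mathcal{L}F$ and integrating $(x^2\mathrm{m}G')'\geq 0$ to conclude $G=x^2/2-F\geq 0$. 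Both computations check out ($\mathcal{L}(x^2)=-\tfrac12-2(1-\alpha)x^2$ and $\mathcal{L}F\equiv-\tfrac14$ are correct, and $4(1-\alpha)>0$ since $\alpha<\tfrac12$). The paper's route is shorter and yields the constant with no auxiliary function; your route is more systematic and would generalize to weights $\mathrm{m}$ for which no such clean pointwise monotonicity of the integrand is available, at the cost of having to justify once more that the boundary term $x^2\mathrm{m}(x)G'(x)$ vanishes at the origin — which, as you note, follows from the super-exponential decay of $\mathrm{m}$.
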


\begin{proof}
Given that $\bm{W}_{\!\alpha,\nu}(\cdot)$ solves the equation $\mathcal{L} w = \bigl((\tfrac{1}{2}-\alpha)^2 - \nu^2\bigr) w$, we have
\[
-{d \over d\xi}\biggl[\xi^{2} \mathrm{m}(\xi) {d \over d\xi} \bm{W}_{\!\alpha,\nu}(\xi)\biggr] = 4 \bigl((\tfrac{1}{2}-\alpha)^2 - \nu^2\bigr) \, \mathrm{m}(\xi) \bm{W}_{\!\alpha,\nu}(\xi).
\]
Taking into account \eqref{eq:bW_limderivatives}, after integrating both sides between $0$ and $y$ and then between $0$ and $x$ we obtain
\begin{equation} \label{eq:bW_integraleq}
1 - \bm{W}_{\!\alpha,\nu}(x) = 4 \bigl((\tfrac{1}{2}-\alpha)^2 - \nu^2\bigr) \! \int_0^x {1 \over y^{2\,} \mathrm{m}(y)} \! \int_0^y \mathrm{m}(\xi) \, \bm{W}_{\!\alpha,\nu}(\xi)\, d\xi\, dy.
\end{equation}
Using \eqref{eq:bW_ineq_param} and the inequality $({\xi \over y})_{\,}^{4(1-\alpha)} \leq 1$ (which holds for $0 < \xi \leq y$ due to the assumption $\alpha < {1 \over 2}$), we thus find that
\begin{align*}
1 - \bm{W}_{\!\alpha,\nu}(x) & \leq 4 \bigl((\tfrac{1}{2} - \alpha)^2 - \nu^2\bigr) \! \int_0^x y^{4\alpha-3} \exp\bigl(\tfrac{1}{2y^2}\bigr) \! \int_0^y \xi^{1-4\alpha} \exp\bigl(-\tfrac{1}{2\xi^2}\bigr) \, d\xi\, dy \\
& \leq 4 \bigl((\tfrac{1}{2} - \alpha)^2 - \nu^2\bigr) \! \int_0^x y \exp\bigl(\tfrac{1}{2y^2}\bigr) \! \int_0^y \xi^{-3} \exp\bigl(-\tfrac{1}{2\xi^2}\bigr) \, d\xi\, dy \\
& = 2 \bigl((\tfrac{1}{2} - \alpha)^2 - \nu^2\bigr)\, x^2
\end{align*}
as required.
\end{proof}

\section{The index Whittaker convolution algebra} \label{chap:iw_convalgebra}

\subsection{Index Whittaker transforms of measures} \label{sec:iw_measures}

The confluent hypergeometric-type function \eqref{eq:bW_def} is the kernel of (the modified form of) the \emph{index Whittaker transform} defined by
\begin{equation} \label{eq:iw_transf}
\widehat{f}(\lambda) \equiv (\bm{\mathrm{W}}f)(\lambda) = \int_0^\infty f(x) \bm{W}_{\!\alpha,\Delta_{\lambda\!}}(x)\, \mathrm{m}(x) dx, \qquad \lambda \geq 0
\end{equation}
where $\Delta_\lambda := \sqrt{(\frac{1}{2}-\alpha)^2-\lambda}\,$ and $\mathrm{m}$ is the function given in \eqref{eq:iw_mweight}. The basic $L_2$-property of the index Whittaker transform is given in the next theorem. In the statement (and later in this work) we use the notation $L_p(\mathrm{m}) := L_p\bigl((0,\infty); \mathrm{m}(x) dx\bigr)$ for the Lebesgue spaces with respect to the weight defined in \eqref{eq:iw_mweight}, whose norm will be denoted by $\|\cdot\|_p$.

\begin{theorem} \label{thm:iw_transf_L2iso}
For $\alpha > 0$, the index Whittaker transform \eqref{eq:iw_transf} defines an isometric isomorphism
\[
\bm{\mathrm{W}}\!: L_2(\mathrm{m}) \longrightarrow L_2\bigl(\Lambda; \rho(\lambda) d\lambda \bigr)
\]
where $\Lambda := \bigl(({1 \over 2} - \alpha)^2,\infty\bigr)$ and $\rho(\lambda) := 2^{1-2\alpha} \pi^{-2} \sinh(-2\pi i \Delta_{\lambda})  \bigl| \Gamma\bigl({1 \over 2} - \alpha + \Delta_\lambda\bigr) \bigr|^{2\!}$, whose inverse is given by
\begin{equation} \label{eq:iw_inverse}
(\bm{\mathrm{W}}^{-1} \phi)(x) = \int_{({1 \over 2} - \alpha)^{2\!}}^\infty \! \phi(\lambda) \, \bm{W}_{\!\alpha, \Delta_{\lambda\!}}(x) \, \rho(\lambda) d\lambda
\end{equation}
the convergence of the integrals \eqref{eq:iw_transf} and \eqref{eq:iw_inverse} being understood with respect to the norm of the spaces $L_2\bigl(\Lambda; \rho(\lambda) d\lambda \bigr)$ and $L_2(\mathrm{m})$ respectively. \end{theorem}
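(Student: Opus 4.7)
My approach is to reduce this theorem to the Plancherel theorem for the classical index Whittaker transform \eqref{eq:intro_Whit_transf} established by Srivastava and Yakubovich \cite{srivastava1998,yakubovich1996}, via an explicit change of variables. Using definition \eqref{eq:bW_def} together with the weight \eqref{eq:iw_mweight}, a direct computation gives
\[
\bm{W}_{\!\alpha,i\tau}(x)\,\mathrm{m}(x) \;=\; 2^\alpha\, x^{1-2\alpha}\, e^{-1/(4x^2)}\, W_{\alpha,i\tau}\!\bigl(\tfrac{1}{2x^2}\bigr),
\]
which makes the substitution $y = \tfrac{1}{2x^2}$ natural. Setting $g(y) := c_\alpha\, f\bigl((2y)^{-1/2}\bigr)\, y^\alpha e^{-y/2}$ for a suitable positive constant $c_\alpha$, and parametrizing $\lambda = \tau^2 + (\tfrac{1}{2}-\alpha)^2$ (so $\Delta_\lambda = i\tau$ with $\tau \geq 0$), I would verify the intertwining identities
\[
(\bm{\mathrm{W}}f)(\lambda) \;=\; (\mathrm{W}_{\!\alpha}g)(\tau) \qquad\text{and}\qquad \|f\|_{L_2(\mathrm{m})} \;=\; c_\alpha'\, \|g\|_{L_2((0,\infty);\, y^{-2}dy)}
\]
with an explicit proportionality constant $c_\alpha'>0$ coming from the Jacobian. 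Thus $f \mapsto g$ is an isometric isomorphism (up to scaling) between $L_2(\mathrm{m})$ and $L_2((0,\infty); y^{-2}dy)$ that conjugates $\bm{\mathrm{W}}$ into $\mathrm{W}_{\!\alpha}$.

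Next I would invoke the classical Plancherel theorem, which asserts that (for $0<\alpha<\tfrac{1}{2}$) $\mathrm{W}_{\!\alpha}$ extends to an isometric isomorphism from $L_2((0,\infty); y^{-2}dy)$ onto $L_2((0,\infty); \sigma(\tau)\,d\tau)$, with spectral density proportional to $\tau \sinh(2\pi\tau)\, |\Gamma(\tfrac{1}{2}-\alpha+i\tau)|^{2}$ and an explicit inverse that integrates against $W_{\alpha,i\tau}$ with respect to $\sigma$. Transporting this through the bijection $\tau \leftrightarrow \lambda = \tau^2 + (\tfrac{1}{2}-\alpha)^2$ (so $d\lambda = 2\tau\,d\tau$), and using $\sinh(-2\pi i \Delta_\lambda) = \sinh(2\pi\tau)$, converts the image space into $L_2(\Lambda; \rho(\lambda) d\lambda)$ with precisely the density $\rho(\lambda)$ from the statement, once the constants $c_\alpha$ and $c_\alpha'$ are fixed correctly. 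The inversion formula \eqref{eq:iw_inverse} and the $L_2$-sense of convergence of both \eqref{eq:iw_transf} and \eqref{eq:iw_inverse} are then inherited directly from the corresponding statements for $\mathrm{W}_{\!\alpha}$.

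The main obstacle I anticipate is not conceptual but arithmetic: pinning down the exact normalization $2^{1-2\alpha}\pi^{-2}$ in $\rho(\lambda)$ requires carefully tracking the Jacobian of $y = \tfrac{1}{2x^2}$, the factor $2^\alpha$ in \eqref{eq:bW_def}, the weight $y^\alpha e^{-y/2}$ absorbed into $g$, and the specific normalization convention used in \cite{srivastava1998,yakubovich1996} for the classical Plancherel measure. The assumption $\alpha > 0$ (on top of the standing $\alpha < \tfrac{1}{2}$) is needed here because it enters the hypotheses of the classical Plancherel theorem, guaranteeing the requisite decay of $W_{\alpha,i\tau}(y)$ relative to the weight $y^{-2}$ on $(0,\infty)$.
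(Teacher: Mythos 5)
Your proposal is correct and follows essentially the same route as the paper: the paper's proof also conjugates $\bm{\mathrm{W}}$ into the classical transform $\mathrm{W}_{\!\alpha}$ via the isometry $(\Theta f)(y) = 2^{\alpha-1} y^\alpha e^{-y/2} f\bigl((2y)^{-1/2}\bigr)$ from $L_2(\mathrm{m})$ onto $L_2\bigl((0,\infty); y^{-2}dy\bigr)$, writing $(\bm{\mathrm{W}} f)(\lambda) = 2^{\alpha-1}[\mathrm{W}_{\!\alpha}(\Theta f)](i\Delta_\lambda)$, and then invokes the known $L_2$-theory of \cite[Section 3]{srivastava1998}. The only difference is that the paper does not spell out the constant-tracking you flag as the main obstacle; it simply cites the reference for the normalization.
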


\begin{proof}
The index Whittaker transform in the form \eqref{eq:iw_transf} can be written as $(\bm{\mathrm{W}} f)(\lambda) = 2^{\alpha - 1} [\mathrm{W}_\alpha (\Theta f)](i\Delta_{\lambda})$, where $\Theta: L_2(\mathrm{m}) \longrightarrow L_2\bigl((0,\infty); x^{-2} dx\bigr)$ is the isometric operator defined by
\[
(\Theta f)(x) := 2^{\alpha - 1} x^\alpha e^{-{x \over 2}} f\bigl((2x)^{-1/2}\bigr), \qquad x > 0
\]
and $\mathrm{W}_\alpha$ is the operator of the index Whittaker transform in its classical form, defined by \eqref{eq:intro_Whit_transf}. Therefore, the fact that $\bm{\mathrm{W}}$ is an isomorphism and the inversion formula follows from known results on the $L_2$-theory for the index Whittaker transform, cf.\ \cite[Section 3]{srivastava1998}.
\end{proof}

We will also need the following addenda to Theorem \ref{thm:iw_transf_L2iso}:

\begin{lemma} \label{lem:iw_transf_compactsupp}
Let $f \in \mathrm{C}_\mathrm{c}^2(0,\infty)$, and let $\widehat{f}$ be its index Whittaker transform \eqref{eq:iw_transf}. Then 
\begin{equation} \label{eq:iw_transf_compactsupp}
f(x) = \int_{({1 \over 2} - \alpha)^{2\!}}^\infty \! \widehat{f}(\lambda) \, \bm{W}_{\!\alpha, \Delta_{\lambda\!}}(x) \, \rho(\lambda) d\lambda
\end{equation}
where the right-hand side integral converges absolutely for each $x > 0$.
\end{lemma}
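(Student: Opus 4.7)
The plan is to first establish the absolute convergence of the integral in \eqref{eq:iw_transf_compactsupp} by combining asymptotic estimates for the three factors in the integrand, and then to identify the resulting function with $f$ by invoking the $L_2$-inversion provided by Theorem \ref{thm:iw_transf_L2iso}. Throughout, I parametrize $\Lambda$ by $\tau := \sqrt{\lambda - (\tfrac{1}{2}-\alpha)^2} \in (0,\infty)$, so that $\Delta_\lambda = i\tau$ and $d\lambda = 2\tau\, d\tau$.

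The preparatory step is the operational identity $\widehat{\mathcal{L}f}(\lambda) = \lambda\, \widehat{f}(\lambda)$, valid for all $\lambda \in \Lambda$. It follows from two integrations by parts applied to the Sturm--Liouville form of $\mathcal{L}$ read off from \eqref{eq:iw_Lop}, using the eigenfunction equation $\mathcal{L}\bm{W}_{\!\alpha,\Delta_\lambda} = \lambda\, \bm{W}_{\!\alpha,\Delta_\lambda}$; all boundary terms vanish because $\mathrm{supp}(f)$ is compact in $(0,\infty)$, and $\mathcal{L}f \in \mathrm{C}_\mathrm{c}(0,\infty)$ ensures that $\widehat{\mathcal{L}f}(\lambda)$ is defined as a proper Lebesgue integral for each $\lambda$. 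The purpose of this identity is to trade the $\mathrm{C}^2$-regularity of $f$ for the extra decay $\widehat{f}(\lambda) = \lambda^{-1}\widehat{\mathcal{L}f}(\lambda)$.

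Next I collect the large-$\tau$ asymptotics. From \eqref{eq:bW_itau_asymp} one has $|\bm{W}_{\!\alpha,i\tau}(x)| \leq C_{[a,b]}\, \tau^{\alpha - 1/2} e^{-\pi\tau/2}$ uniformly for $x$ in any compact $[a,b] \subset (0,\infty)$. Inserting this bound into the integral defining $\widehat{\mathcal{L}f}(\lambda)$, taken over the compact support of $f$, gives $|\widehat{\mathcal{L}f}(\lambda)| = O(\tau^{\alpha - 1/2} e^{-\pi\tau/2})$, and hence $|\widehat{f}(\lambda)| = O(\tau^{\alpha - 5/2} e^{-\pi\tau/2})$ by the operational identity. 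Stirling's formula for $|\Gamma(\tfrac{1}{2}-\alpha+i\tau)|^2$ together with $\sinh(2\pi\tau) \sim \tfrac{1}{2}e^{2\pi\tau}$ yields $\rho(\lambda) = O(\tau^{-2\alpha} e^{\pi\tau})$. Multiplying the three bounds and substituting $d\lambda = 2\tau\, d\tau$ cancels the exponentials and leaves an integrand of size $O(\tau^{-2})\, d\tau$ at infinity. Near $\tau = 0^+$ the crude bounds $|\widehat{f}(\lambda)| \leq \|f\|_{L_1(\mathrm{m})}$ and $|\bm{W}_{\!\alpha,\Delta_\lambda}(x)| \leq 1$ (the latter from \eqref{eq:bW_ineq_param}), together with $\rho(\lambda)\, d\lambda = O(\tau^2)\, d\tau$, secure integrability there. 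Absolute convergence follows, and because the dominating function is uniform for $x$ in compact subsets of $(0,\infty)$, dominated convergence shows that the integral defines a continuous function $\widetilde{f}(x)$ on $(0,\infty)$.

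To identify $\widetilde{f}$ with $f$, consider the truncated integrals
\[
\widetilde{f}_n(x) := \int_{(\frac{1}{2}-\alpha)^{2}}^{n} \widehat{f}(\lambda)\, \bm{W}_{\!\alpha,\Delta_\lambda}(x)\, \rho(\lambda)\, d\lambda.
\]
By Theorem \ref{thm:iw_transf_L2iso} one has $\widetilde{f}_n \to f$ in $L_2(\mathrm{m})$, while simultaneously $\widetilde{f}_n(x) \to \widetilde{f}(x)$ at every $x > 0$ by dominated convergence. Extracting a pointwise a.e.\ convergent subsequence from the $L_2$-convergent family forces $\widetilde{f} = f$ a.e., and the continuity of both functions then gives equality at every $x > 0$. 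The main obstacle I anticipate is the tight bookkeeping of asymptotics: the exponential growth $e^{\pi\tau}$ of the Plancherel weight must cancel exactly against two copies of $e^{-\pi\tau/2}$ coming from $\bm{W}_{\!\alpha,i\tau}$ and $\widehat{f}$, leaving only polynomial decay, and integrability at infinity hinges entirely on the $\lambda^{-1}$ factor extracted from the operational identity, so the $\mathrm{C}^2$-regularity of $f$ is indispensable.
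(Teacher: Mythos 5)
Your proposal is correct, and it reaches the crucial decay estimate $\widehat{f}(\lambda) = O\bigl(\tau^{\alpha-5/2}e^{-\pi\tau/2}\bigr)$ by a different mechanism than the paper. The paper inserts the uniform asymptotic expansion \eqref{eq:bW_itau_asymp} directly into the integral defining $\widehat{f}$, rewrites the resulting oscillatory integral as a Fourier transform of $g(\xi) = f(e^{\xi/2})\exp\bigl(\xi(\tfrac12-\alpha)-\tfrac14 e^{-\xi}\bigr) \in \mathrm{C}_\mathrm{c}^2(\mathbb{R})$ via the substitution $y = e^{\xi/2}$, and gains the factor $\tau^{-2}$ by integrating by parts twice in the Fourier variable. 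You instead integrate by parts twice against the Sturm--Liouville form of $\mathcal{L}$ to obtain the operational identity $\widehat{\mathcal{L}f}(\lambda) = \lambda\,\widehat{f}(\lambda)$, and then apply the expansion \eqref{eq:bW_itau_asymp} only to the crude bound $|\widehat{\mathcal{L}f}(\lambda)| = O(\tau^{\alpha-1/2}e^{-\pi\tau/2})$; since $\lambda \sim \tau^2$, this yields the same decay. Both routes hinge on exactly the same hypotheses ($\mathrm{C}^2$-regularity and compact support in $(0,\infty)$), and the subsequent power counting against $\rho(\lambda)\,d\lambda = O(\tau^{1-2\alpha}e^{\pi\tau})\,d\tau$ and $\bm{W}_{\!\alpha,i\tau}(x) = O(\tau^{\alpha-1/2}e^{-\pi\tau/2})$ is identical, giving an $O(\tau^{-2})$ integrand. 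Your version has two modest advantages: it sidesteps the slightly informal manipulation of the phase $\cos[\tau\log(8\tau y^2)-\cdots]$ inside an $O(\cdot)$ symbol (where one must observe that the $y$-independent part of the phase factors out as a unimodular constant), and it spells out the identification step --- truncation, $L_2$-convergence from Theorem \ref{thm:iw_transf_L2iso}, an a.e.\ convergent subsequence, and continuity of both sides --- which the paper compresses into the single sentence ``apply Theorem \ref{thm:iw_transf_L2iso}''. The paper's route, on the other hand, needs no information about the eigenfunction equation and would survive in settings where only the kernel asymptotics are available.
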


\begin{proof}
For simplicity, write $\lambda = \tau^2 + ({1 \over 2} - \alpha)^2$ with $\tau \geq 0$. Since $f$ has compact support, we can apply the asymptotic expansion \eqref{eq:bW_itau_asymp} and obtain
\begin{align*}
\widehat{f}\bigl(\tau^2 + (\tfrac{1}{2} - \alpha)^2\bigr) & = \tau^{\alpha - {1 \over 2}} \exp\Bigl(-{\pi \tau \over 2}\Bigr) \, O\biggl(\int_0^\infty \! f(y) \, y^{-2\alpha} \exp\bigl( -\tfrac{1}{4y^2} \bigr) \cos\bigl[\tau\log(8\tau y^2) - \tau - \tfrac{\pi}{2}(\tfrac{1}{2} - \alpha) \bigr] dy \biggr) \\
& = \tau^{\alpha - {1 \over 2}} \exp\Bigl(-{\pi \tau \over 2}\Bigr) \, O\biggl(\int_0^\infty \! f(y) \, y^{-2\alpha} \exp\bigl( -\tfrac{1}{4y^2} + 2i\tau\log y \bigr) dy \biggr) \\
& = \tau^{\alpha - {1 \over 2}} \exp\Bigl(-{\pi \tau \over 2}\Bigr) \, O\biggl(\int_{-\infty}^\infty \! g(\xi) \exp\bigl( i\tau\xi \bigr) d\xi \biggr) \\
& = \tau^{\alpha - {5 \over 2}} \exp\Bigl(-{\pi \tau \over 2}\Bigr) \, O\biggl(\int_{-\infty}^\infty \! g''(\xi) \exp\bigl( i\tau\xi \bigr) d\xi \biggr), \hspace{1.5cm} \tau \to \infty
\end{align*}
where $g(\xi) = f(e^{\xi \over 2}) \exp\bigl(\xi({1 \over 2} - \alpha) - {1 \over 4}e^{-\xi}\bigr)$; the last step is obtained using integration by parts, noting that $g \in \mathrm{C}_\mathrm{c}^2(\mathbb{R})$. Consequently, $\widehat{f}\bigl(\tau^2 + (\tfrac{1}{2} - \alpha)^2\bigr) = O\bigl(\tau^{\alpha - {5 \over 2}} \exp\bigl(-\tfrac{\pi \tau}{2}\bigr)\bigr)$ as $\tau \to \infty$. Combining this with \eqref{eq:bW_itau_asymp} and the expansion $\Gamma(a+i\tau) \sim (2\pi)^{1 \over 2} \tau^{a - {1 \over 2}} e^{-{\pi\tau \over 2}}$ as $\tau \to \infty$ \cite[Equation 5.11.9]{dlmf}, we conclude that
\[
\widehat{f}\bigl(\tau^2 + (\tfrac{1}{2} - \alpha)^2\bigr) W_{\alpha, i\tau}(x) \, \tau\rho\bigl(\tau^2 + (\tfrac{1}{2} - \alpha)^2\bigr) = O(\tau^{-2}), \qquad \tau \to \infty,
\]
which proves the absolute convergence of the integral in the right-hand side of \eqref{eq:iw_transf_compactsupp}. The proof is finished by applying Theorem \ref{thm:iw_transf_L2iso}.
\end{proof}

The index Whittaker transform operator extends in a natural way to finite measures:

\begin{definition}
Let $\mu \in \mathcal{M}_{b}[0,\infty)$. The \emph{index Whittaker transform} of the measure $\mu$ is the function defined by the integral
\begin{equation} \label{eq:iw_transf_meas_def}
\widehat{\mu}(\lambda) = \int_{[0,\infty)} \! \bm{W}_{\!\alpha,\Delta_{\lambda\!}}(x)\, \mu(dx), \qquad \lambda \geq 0.
\end{equation}
\end{definition}

The next proposition contains some basic properties of the index Whittaker transform of finite measures. In the statement, the notation $\mu_n \warrow \mu$ refers to the weak convergence of the corresponding measures \cite[Definition 13.12(i)]{klenke2014}.

\begin{proposition} \label{prop:iwmeas_props}
The index Whittaker transform $\widehat{\mu}$ of $\mu \in \mathcal{M}_{b}[0,\infty)$ has the following properties:
\begin{enumerate}[itemsep=1.2pt,topsep=4pt]
\item[\textbf{(i)}] $\widehat{\mu}$ is uniformly continuous on $[0,\infty)$. Moreover, if a family of measures $\{\mu_j: j \in J\} \subseteq \mathcal{P}[0,\infty)$ is such that the family of restricted measures $\bigl\{\mu_j\restrict{(0,\infty)\!\!\!} : j \in J\bigr\}$ is tight, then $\{\widehat{\mu_j}: j \in J\}$ is uniformly equicontinuous on $[0,\infty)$. \\[-10pt]

\item[\textbf{(ii)}] Each measure $\mu \in \mathcal{M}_{b}[0,\infty)$ is uniquely determined by its index Whittaker transform $\widehat{\mu}$.\\[-10pt]

\item[\textbf{(iii)}] \label{prop:iwmeas_props_weak_iwunif} If $\{\mu_n\}$ is a sequence of measures belonging to $\mathcal{M}_{b}[0,\infty)$, $\mu \in \mathcal{M}_{b}[0,\infty)$, and $\mu_n \warrow \mu$, then
\[
\widehat{\mu_n} \xrightarrow[\,n \to \infty\,]{} \widehat{\mu} \qquad \text{uniformly for } \lambda \text{ in compact sets.}
\]

\item[\textbf{(iv)}] \label{prop:iwmeas_props_iwpoint_weak} If $\{\mu_n\}$ is a sequence of measures belonging to $\mathcal{M}_{b}[0,\infty)$ whose index Whittaker transforms are such that
\begin{equation} \label{eq:iwmeas_continuity_hyp}
\widehat{\mu_n}(\lambda) \xrightarrow[\,n \to \infty\,]{} f(\lambda) \qquad \text{pointwise in } \lambda \geq 0
\end{equation}
for some real-valued function $f$ which is continuous at a neighborhood of zero, then $\mu_n \warrow \mu$ for some measure $\mu \in \mathcal{M}_{b}[0,\infty)$ such that $\widehat{\mu} \equiv f$. 
\end{enumerate}
\end{proposition}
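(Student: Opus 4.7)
The four parts build on each other, so I would prove them in order.

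For \textbf{(i)}, pointwise continuity of $\widehat\mu$ on $[0,\infty)$ follows from the entireness of $\nu \mapsto \bm{W}_{\!\alpha,\nu}$ (Theorem \ref{thm:bW_laplacerep}), the uniform bound \eqref{eq:bW_ineq_param}, and dominated convergence. Next, the asymptotic expansion \eqref{eq:bW_itau_asymp} shows that $\bm{W}_{\!\alpha,\Delta_\lambda}(x) \to 0$ as $\lambda \to \infty$ uniformly for $x \in [\eps,\infty)$; dominated convergence then gives $\widehat\mu(\lambda) \to \mu(\{0\})$ as $\lambda \to \infty$, and a continuous function on $[0,\infty)$ with a finite limit at infinity is uniformly continuous. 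For the equicontinuity claim, given $\eps > 0$, pick $[a,b] \subset (0,\infty)$ with $\mu_j((0,\infty)\setminus [a,b]) < \eps$ for all $j$, and split $\widehat{\mu_j}(\lambda_1) - \widehat{\mu_j}(\lambda_2)$ accordingly: the mass at $\{0\}$ contributes nothing since $\bm{W}_{\!\alpha,\Delta_\lambda}(0) \equiv 1$; the tail contributes at most $2\eps$ via $|\bm{W}_{\!\alpha,\Delta_\lambda}| \leq 1$; and on the compact set $[a,b]$ the family $\{\bm{W}_{\!\alpha,\Delta_\lambda}\restrict{[a,b]}: \lambda \geq 0\}$ is equicontinuous in $\lambda$ uniformly in $x$ (joint continuity on compact $\lambda$-boxes plus the uniform decay at $\lambda = \infty$ furnished by \eqref{eq:bW_itau_asymp}).

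For \textbf{(ii)}, Lemma \ref{lem:iw_transf_compactsupp} combined with Fubini (justified by the absolute convergence in that lemma and finiteness of $\mu$) yields, for any $f \in \mathrm{C}_\mathrm{c}^2(0,\infty)$,
\[
\int_{[0,\infty)} \! f \, d\mu \;=\; \int_{({1 \over 2}-\alpha)^2}^\infty \widehat{f}(\lambda)\, \widehat{\mu}(\lambda)\, \rho(\lambda)\, d\lambda,
\]
so $\widehat\mu$ determines $\mu|_{(0,\infty)}$; since $\widehat\mu(0) = \mu([0,\infty))$ by \eqref{eq:bW_equal1_param}, $\mu(\{0\})$ is also determined. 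Part \textbf{(iii)} then follows from two ingredients: the Portmanteau theorem applied to $\bm{W}_{\!\alpha,\Delta_\lambda}(\cdot) \in \mathrm{C}_\mathrm{b}[0,\infty)$ gives pointwise convergence $\widehat{\mu_n}(\lambda) \to \widehat\mu(\lambda)$; Prokhorov's theorem gives tightness of $\{\mu_n\}$, hence of $\{\mu_n\restrict{(0,\infty)}\}$, so by part (i) the family $\{\widehat{\mu_n}\}$ is uniformly equicontinuous, and a standard Arzelà--Ascoli-style argument upgrades the convergence to uniform-on-compacts in $\lambda$.

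The main obstacle is part \textbf{(iv)}, which is the Lévy-type continuity statement. My plan is: (a) deduce tightness of $\{\mu_n\}$ from continuity of $f$ at $0$; (b) extract weakly convergent subsequences by Prokhorov; (c) identify the common limit as the unique $\mu$ with $\widehat\mu = f$ using (iii) and (ii); (d) conclude weak convergence of the full sequence. The crux is (a). Set
\[
g_a(x) := 1 - \frac{1}{a} \int_0^a \bm{W}_{\!\alpha,\Delta_\tau}(x)\, d\tau.
\]
By Lemma \ref{lem:bW_ineq_1minusW}, $g_a(x) \leq a x^2$, so $g_a$ is small near $x = 0$; conversely, since $\bm{W}_{\!\alpha,\Delta_\tau}(x) \to 0$ as $x \to \infty$ for each $\tau > 0$ (by \eqref{eq:bW_liminfty}), dominated convergence gives $g_a(x) \to 1$ as $x \to \infty$, so I can pick $R = R(a)$ with $g_a \geq \tfrac{1}{2}$ on $[R,\infty)$. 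Fubini then yields
\[
\mu_n\bigl([R,\infty)\bigr) \;\leq\; \frac{2}{a} \int_0^a \bigl(1 - \widehat{\mu_n}(\tau)\bigr) d\tau \;\xrightarrow[n \to \infty]{}\; \frac{2}{a} \int_0^a \bigl(1 - f(\tau)\bigr) d\tau,
\]
the limit using dominated convergence together with the fact that $\widehat{\mu_n}(0) = \mu_n([0,\infty)) \to f(0)$ provides a uniform bound on $\{\mu_n([0,\infty))\}$. After reducing to the probability case $f(0) = 1$ by normalizing $\mu_n$ by $\widehat{\mu_n}(0)$ (and disposing of the trivial case $f(0)=0$ separately, in which $f \equiv 0$ and $\mu_n \warrow 0$), the right-hand side tends to $0$ as $a \to 0^+$ by continuity of $f$ at $0$. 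This proves tightness, after which steps (b)--(d) are routine.
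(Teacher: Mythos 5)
Your parts (i), (iii) and (iv) are sound. In (i) you replace the paper's quantitative Lipschitz bound (which comes from the Laplace representation, $|\bm{W}_{\!\alpha,\Delta_{\lambda_1}\!}(x)-\bm{W}_{\!\alpha,\Delta_{\lambda_2}\!}(x)|\leq|\Delta_{\lambda_1}-\Delta_{\lambda_2}|\int s\,e^{(1/2-\alpha)s}\eta_x(s)\,ds$) by a compactness argument on $[a,b]\times[0,\infty]$ using the uniform decay \eqref{eq:bW_itau_asymp}; both work, and your observation that $\widehat\mu(\lambda)\to\mu(\{0\})$ gives uniform continuity cheaply. Your (iii) and your tightness argument in (iv) via $g_a(x)=1-\frac1a\int_0^a\bm{W}_{\!\alpha,\Delta_\tau}(x)\,d\tau$ are essentially the paper's own arguments.

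The genuine gap is in part (ii). Your Parseval identity $\int f\,d\mu=\int_\Lambda\widehat f(\lambda)\widehat\mu(\lambda)\rho(\lambda)\,d\lambda$ cannot be obtained by Fubini in the way you indicate, and is in fact problematic as stated. The absolute convergence established in Lemma \ref{lem:iw_transf_compactsupp} is pointwise in $x$ but degenerates as $x\to0$: writing $\lambda=\tau^2+(\tfrac12-\alpha)^2$, one has $\rho(\lambda)\,d\lambda\asymp\tau^{1-2\alpha}e^{\pi\tau}d\tau$ while the proof of that lemma only gives $\widehat f(\lambda)=O(\tau^{\alpha-5/2}e^{-\pi\tau/2})$, so $\int_\Lambda|\widehat f(\lambda)|\rho(\lambda)\,d\lambda=+\infty$ for generic $f\in\mathrm{C}_\mathrm{c}^2(0,\infty)$ (the $e^{\pi\tau/2}$ surplus cannot be absorbed; Plancherel only forces $|\widehat f|\lesssim e^{-\pi\tau/2}$ in an $L_2$ sense). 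Since $\bm{W}_{\!\alpha,\Delta_\lambda\!}(x)\to1$ pointwise as $x\to0$, Fatou gives $\int_\Lambda|\widehat f(\lambda)||\bm{W}_{\!\alpha,\Delta_\lambda\!}(x)|\rho(\lambda)\,d\lambda\to\infty$ as $x\to0$, so Tonelli's criterion fails whenever $\mu$ puts mass in every neighbourhood of the origin; the extreme case $\mu=\delta_0$ shows the right-hand side of your identity need not even converge absolutely. Restricting the outer integral to $\mathrm{supp}\,f\subset[a,b]$ does make Fubini legitimate, but then the inner integral produces $\widehat{\mu|_{[a,b]}}$ rather than $\widehat\mu$, so the argument does not close. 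You would need a separate device to handle mass near $0$ — the paper sidesteps all of this by rewriting $\widehat\mu$ through the Kontorovich--Lebedev transform (formula \eqref{eq:iwmeas_props_pf3}, where the change of order of integration is harmless) and then invoking injectivity of the Kontorovich--Lebedev and Laplace transforms. Note also that part (iv) of your write-up implicitly leans on (ii) to identify subsequential limits, so the gap propagates there until (ii) is repaired.
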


\begin{proof}
\textbf{(i)} Let us prove the second statement, which implies the first. Fix $\eps > 0$. By the tightness assumption, we can choose $M > 0$ such that $\mu_j\bigl((0,{1 \over M}) \cup (M,\infty)\bigr) < \eps$. Moreover, noting that $|\mathrm{Re}\,\Delta_\lambda| \leq {1 \over 2} - \alpha$, it is easily seen that $|\exp(\Delta_{\lambda_1} s) - \exp(\Delta_{\lambda_2} s)| \leq |\Delta_{\lambda_1} - \Delta_{\lambda_2}| s \, e^{({1 \over 2} - \alpha) s}$ for all $s, \lambda_1, \lambda_2 \geq 0$ and, consequently, from Theorem \ref{thm:bW_laplacerep} we get
\begin{equation} \label{eq:iwmeas_props_pf1}
\bigl|\bm{W}_{\!\alpha,\Delta_{\lambda_1}\!}(x) - \bm{W}_{\!\alpha,\Delta_{\lambda_2}\!}(x)\bigr| \leq |\Delta_{\lambda_1} - \Delta_{\lambda_2}| \int_{-\infty}^\infty s \, e^{({1 \over 2} - \alpha) s} \eta_x(s) \, ds
\end{equation}
where the integral on the right-hand side converges uniformly with respect to $x$ in compact subsets of $(0,\infty)$ and is therefore a continuous function of $x > 0$. By continuity of $\lambda \mapsto \Delta_\lambda$, we can choose $\delta > 0$ such that 
\begin{equation} \label{eq:iwmeas_props_pf2}
|\Delta_{\lambda_1} - \Delta_{\lambda_2}| < {\eps \over C_M} \;\;\; \text{whenever} \;\; |\lambda_1 - \lambda_2| < \delta \;\;\;\; (\lambda_1,\lambda_2 \geq 0)
\end{equation}
where $C_M = \max_{x \in [{1 \over M}, M]} \int_{-\infty}^\infty s \, e^{({1 \over 2} - \alpha) s} \eta_x(s) \, ds < \infty$. Combining \eqref{eq:iwmeas_props_pf1}--\eqref{eq:iwmeas_props_pf2}, \eqref{eq:bW_ineq_param} and the fact that $\bm{W}_{\!\alpha,\Delta_{\lambda}\!}(0)$ $\equiv 1$, we deduce that
\begin{align*}
& \bigl|\widehat{\mu_j}(\lambda_1) - \widehat{\mu_j}(\lambda_2)\bigr| = \biggl| \int_{(0,\infty)} \bigl(\bm{W}_{\!\alpha,\Delta_{\lambda_1}\!}(x) - \bm{W}_{\!\alpha,\Delta_{\lambda_2}\!}(x)\bigr) \mu_j(dx) \biggr| \\
& \quad \leq\int_{(0,{1 \over M}) \cup (M,\infty)\!} \bigl|\bm{W}_{\!\alpha,\Delta_{\lambda_1}\!}(x) - \bm{W}_{\!\alpha,\Delta_{\lambda_2}\!}(x)\bigr|\mu_j(dx) + \int_{[{1 \over M},M]\!} \bigl|\bm{W}_{\!\alpha,\Delta_{\lambda_1}\!}(x) - \bm{W}_{\!\alpha,\Delta_{\lambda_2}\!}(x)\bigr|\mu_j(dx) \\
& \quad \leq 2\eps + \eps = 3\eps
\end{align*}
for all $j \in J$, provided that $|\lambda_1 - \lambda_2| < \delta$, which means that $\{\widehat{\mu_j}\}$ is uniformly equicontinuous. \\[-10pt]

\textbf{(ii)} Writing $\lambda = \tau^2 + ({1 \over 2} - \alpha)^2$ with $\tau \geq 0$, the index Whittaker transform $\widehat{\mu}(\tau^2 + (\tfrac{1}{2} - \alpha)^2)$ can be written as
\begin{align}
\nonumber \widehat{\mu}\bigl(\tau^2 + (\tfrac{1}{2} - \alpha)^2\bigr) & = {2^{1+2\alpha} \over |\Gamma({1 \over 2} - \alpha + i\tau)|^2} \int_{[0,\infty)} \! \int_0^\infty \exp\bigl(-\tfrac{(xt)^2}{2}\bigr) t^{-2\alpha} K_{2i\tau}(t)\, dt\, \mu(dx) \\
\label{eq:iwmeas_props_pf3} & = {2^{1+2\alpha} \over |\Gamma({1 \over 2} - \alpha + i\tau)|^2} \int_0^\infty \! K_{2i\tau}(t) \, t^{-2\alpha} \! \int_{[0,\infty)} \exp\bigl(-\tfrac{(xt)^2}{2}\bigr) \mu(dx)\, dt
\end{align}
where we have applied integral 2.16.8.3 in \cite{prudnikovII1986}, and the change of order of integration is easily justified. Suppose that $\widehat{\mu_1}(\lambda) = \widehat{\mu_2}(\lambda)$ for all $\lambda \geq ({1 \over 2} - \alpha)^2$. Then \eqref{eq:iwmeas_props_pf3}, together with the injectivity of the Kontorovich-Lebedev transform (see \cite[Theorem 6.5]{yakubovichluchko1994}), imply that 
\[
\int_0^\infty \exp\bigl(-\tfrac{(xt)^2}{2}\bigr) \mu_1(dx) = \int_0^\infty \exp\bigl(-\tfrac{(xt)^2}{2}\bigr) \mu_2(dx) \quad\; \text{for almost every } t>0.
\]
In fact, by continuity this equality holds for all $t \geq 0$, because the integrals converge uniformly with respect to $t \geq 0$. Consequently,
\[
\int_0^\infty e^{-ys} \bm{\mu}_1(dy) = \int_0^\infty e^{-ys} \bm{\mu}_2(dy) \quad\; \text{for all } s \geq 0
\]
where $\bm{\mu}_i$ ($i = 1,2$) are the measures defined by $\bm{\mu}_i(B) = \mu_i(\{x: x^2 \in B\})$. Since the measures $\bm{\mu}_i$ are uniquely determined by their Laplace transforms \cite[Theorem 15.6]{klenke2014}, we have $\bm{\mu}_1 = \bm{\mu}_2$ and, consequently, $\mu_1 = \mu_2$. \\[-10pt]

\textbf{(iii)} Since $\bm{W}_{\!\alpha,\Delta_{\lambda\!}}(x)$ is continuous and bounded, the pointwise convergence $\widehat{\mu_n}(\lambda) \to \widehat{\mu}(\lambda)$ follows trivially from the Portemanteau theorem (see \cite[Theorem 13.16]{klenke2014}). For the restricted measures, we clearly have $\mu_n\restrict{(0,\infty)\!\!} \warrow \mu\restrict{(0,\infty)\!}$, hence (by Prokhorov's theorem \cite[Theorem 13.29]{klenke2014}) $\{\mu_n\restrict{(0,\infty)\!}\}$ is tight and therefore (by part (i)) $\{\widehat{\mu_n}\}$ is uniformly equicontinuous. Invoking Lemma 15.22 in \cite{klenke2014}, we conclude that the convergence $\widehat{\mu_n} \to \widehat{\mu}$ is uniform for $\lambda$ in compact sets. \\[-10pt]

\textbf{(iv)} We only need to show that the sequence $\{\mu_n\}$ is tight. Indeed, if $\{\mu_n\}$ is tight, then Prokhorov's theorem yields that for any subsequence $\{\mu_{n_k}\}$ there exists a further subsequence $\{\mu_{n_{k_j}}\!\}$ and a finite measure $\mu \in \mathcal{M}_{b}[0,\infty)$ such that $\mu_{n_{k_j}}\!\! \warrow \mu$. Then, due to part (iii) and to \eqref{eq:iwmeas_continuity_hyp}, we have $\widehat{\mu}(\lambda) = f(\lambda)$ for all $\lambda \geq 0$, which implies (by part (ii)) that all such subsequences have the same weak limit; consequently, the sequence $\mu_n$ itself converges weakly to $\mu$.

To prove the tightness, take $\eps > 0$. Since $f$ is continuous at a neighborhood of zero, we have ${1 \over \delta} \int_0^{2\delta} \bigl(f(0) - f(\lambda)\bigr)d\lambda \longrightarrow 0$ as $\delta \downarrow 0$; therefore, we can choose $\delta > 0$ such that
\[
{1 \over \delta} \int_0^{2\delta} \bigl(f(0) - f(\lambda)\bigr)d\lambda < \eps.
\]
Next we observe that, as a consequence of \eqref{eq:bW_liminfty} and the dominated convergence theorem, we have $\int_0^{2\delta} \bigl( 1-\bm{W}_{\!\alpha,\Delta_{\lambda\!}}(x) \bigr) d\lambda \longrightarrow 2\delta$ as $x \nearrow \infty$, meaning that we can pick $M>0$ such that
\[
\int_0^{2\delta} \bigl( 1-\bm{W}_{\!\alpha,\Delta_{\lambda\!}}(x) \bigr) d\lambda \geq \delta \qquad \text{for all } x > M.
\]

By our choice of $M$ and Fubini's theorem,
\begin{align*}
\mu_n\bigl([M,\infty)\bigr) & = {1 \over \delta} \int_M^\infty \delta\, \mu_n(dx) \\
& \leq {1 \over \delta} \int_M^\infty \int_0^{2\delta} \bigl( 1-\bm{W}_{\!\alpha,\Delta_{\lambda\!}}(x) \bigr) d\lambda\, \mu_n(dx) \\
& \leq {1 \over \delta} \int_0^\infty \int_0^{2\delta} \bigl( 1-\bm{W}_{\!\alpha,\Delta_{\lambda\!}}(x) \bigr) d\lambda\, \mu_n(dx) \\
& = {1 \over \delta} \int_0^{2\delta} \bigl(\widehat{\mu_n}(0) - \widehat{\mu_n}(\lambda)\bigr) d\lambda.
\end{align*}
Hence, using the dominated convergence theorem,
\begin{align*}
\limsup_{n \to \infty} \mu_n\bigl([M,\infty)\bigr) & \leq {1 \over \delta} \limsup_{n \to \infty}\! \int_0^{2\delta} \bigl(\widehat{\mu_n}(0) - \widehat{\mu_n}(\lambda)\bigr) d\lambda \\
& = {1 \over \delta} \int_0^{2\delta}\!\! \lim_{n \to \infty} \bigl(\widehat{\mu_n}(0) - \widehat{\mu_n}(\lambda)\bigr) d\lambda = {1 \over \delta} \int_0^{2\delta} \bigl(f(0) - f(\lambda)\bigr) d\lambda < \eps
\end{align*}
due to the choice of $\delta$. Since $\eps$ is arbitrary, we conclude that $\{\mu_n\}$ is tight, as desired.
\end{proof}

\begin{remark}
Parts (iii) and (iv) of the proposition above show that the index Whittaker transform possesses the following important property: \emph{the index Whittaker transform is a topological homeomorphism between $\mathcal{P}[0,\infty)$ with the weak topology and the set $\widehat{\mathcal{P}}$ of index Whittaker transforms of probability measures with the topology of uniform convergence in compact sets.}
\end{remark}

\subsection{Index Whittaker translation and convolution} \label{sec:iw_transl_conv}

The next theorem contains the product formula which is the starting point for the construction of the convolution operator associated with the index Whittaker transform.

\begin{theorem} \label{thm:bW_prodform}
The product $\bm{W}_{\alpha,\nu}(x) \bm{W}_{\alpha,\nu}(y)$ of two functions \eqref{eq:bW_def} with different arguments admits the integral representation
\begin{equation} \label{eq:bW_prodform}
\bm{W}_{\alpha,\nu}(x) \bm{W}_{\alpha,\nu}(y) = \int_0^\infty \bm{W}_{\alpha,\nu}(\xi)\, q(x,y,\xi)\, \mathrm{m}(\xi) d\xi \qquad (x,y > 0, \; \alpha, \nu \in \mathbb{C})
\end{equation}
where $\mathrm{m}(\cdot)$ is defined in \eqref{eq:iw_mweight} and
\begin{align*}
q(x,y,\xi) & \equiv q_\alpha(x,y,\xi) \\
& := (2\pi)^{-{1 \over 2}} (xy\xi)^{-1+2\alpha} \exp\biggl( {1 \over 2x^2} + {1 \over 2y^2} + {1 \over 2\xi^2} - \biggl({x^2+y^2+\xi^2 \over 4xy\xi}\biggr)^{\!\!2\,} \biggr) D_{2\alpha}\biggl( {x^2+y^2+\xi^2 \over 2xy\xi} \biggr) \\
& \, = \int_{({1 \over 2} - \alpha)^{2\!}}^\infty \!  \bm{W}_{\!\alpha, \Delta_{\lambda\!}}(x)  \bm{W}_{\!\alpha, \Delta_{\lambda\!}}(y) \bm{W}_{\!\alpha, \Delta_{\lambda\!}}(\xi)\, \rho(\lambda) d\lambda.
\end{align*}
In particular, $\int_0^\infty q(x,y,\xi)\, \mathrm{m}(\xi) d\xi = 1$ for all $x,y > 0$.
\end{theorem}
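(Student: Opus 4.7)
The plan is to establish the product formula (\ref{eq:bW_prodform}) with the closed-form kernel first, and to derive the remaining two claims as immediate consequences.

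The key idea is to exploit the Laplace-type representation from Theorem \ref{thm:bW_laplacerep}. Substituting it into each of the two factors on the left-hand side of (\ref{eq:bW_prodform}) and changing variables $u = s+t$ gives
\[
\bm{W}_{\alpha,\nu}(x)\bm{W}_{\alpha,\nu}(y) = \int_{-\infty}^\infty e^{\nu u}\,(\eta_x * \eta_y)(u)\, du,
\]
where $\eta_x * \eta_y$ denotes the additive convolution on $\mathbb{R}$. Applying Theorem \ref{thm:bW_laplacerep} to $\bm{W}_{\alpha,\nu}(\xi)$ in the proposed right-hand side of (\ref{eq:bW_prodform}) and interchanging the order of integration (justified by positivity for real $\nu$), one sees that it suffices to establish the convolution identity
\[
(\eta_x * \eta_y)(u) = \int_0^\infty \eta_\xi(u)\, q(x,y,\xi)\, \mathrm{m}(\xi)\, d\xi \qquad (u \in \mathbb{R}).
\]
Once this identity is proved for $u \in \mathbb{R}$, multiplying both sides by $e^{\nu u}$ and integrating over $u$ yields the product formula for $\nu \geq 0$, and analytic continuation extends it to all $\nu \in \mathbb{C}$, both sides being entire in $\nu$ by Theorem \ref{thm:bW_laplacerep}.

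The convolution identity is the analytic heart of the argument. Given the relation $\bm{W}_{\alpha,\nu}(x) = 2^\alpha x^{2\alpha} e^{1/(4x^2)} W_{\alpha,\nu}(1/(2x^2))$, the cleanest route is to derive it as a reparametrization of the product formula for the Whittaker function $W_{\alpha,\nu}$ established by the authors in \cite{sousaetal2018}: the substitutions $x \mapsto 1/(2x^2)$, $y \mapsto 1/(2y^2)$, $\xi \mapsto 1/(2\xi^2)$ convert the earlier parabolic cylinder product kernel into the one stated here, with the Jacobian of the last substitution accounting for the weight $\mathrm{m}(\xi)$. Alternatively, one can verify the underlying integral identity for $D_{2\alpha}$ directly by inserting the integral representation \cite[Eq.~12.5.3]{dlmf} of $D_{2\alpha}$ and evaluating the resulting inner integral by a Fubini-type calculation.

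With the product formula in hand, the two remaining assertions follow immediately. Setting $\nu = \tfrac{1}{2}-\alpha$ in (\ref{eq:bW_prodform}) and invoking (\ref{eq:bW_equal1_param}) yields the normalization $\int_0^\infty q(x,y,\xi)\mathrm{m}(\xi)\, d\xi = 1$. For the spectral representation of $q$, substituting $\nu = \Delta_\lambda$ (with $\lambda > (\tfrac{1}{2}-\alpha)^2$) shows that $\lambda \mapsto \bm{W}_{\alpha,\Delta_\lambda}(x)\bm{W}_{\alpha,\Delta_\lambda}(y)$ is the index Whittaker transform of $\xi \mapsto q(x,y,\xi)$. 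After verifying that $q(x,y,\cdot) \in L_2(\mathrm{m})$ for each fixed $x,y > 0$ --- which follows from the explicit closed form together with the Gaussian decay of $D_{2\alpha}(t)$ as $t\to\infty$ and its controlled behavior as $t\to 0^+$ --- the inversion formula in Theorem \ref{thm:iw_transf_L2iso} delivers the stated integral expression for $q$. The main obstacle is the parabolic cylinder convolution identity in the preceding paragraph; the rest of the proof is bookkeeping around that core computation.
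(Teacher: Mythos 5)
Your proposal is essentially correct, and for the central claim it lands on the same argument as the paper: the closed-form product formula is obtained from the product formula of \cite{sousaetal2018} by the change of variables $x \mapsto (2x^2)^{-1}$ etc., with the Jacobian producing the weight $\mathrm{m}(\xi)$. The detour through the Laplace representation and the convolution identity for $\eta_x * \eta_y$ is redundant in this route: once you reparametrize the kernel of \cite{sousaetal2018} you already have \eqref{eq:bW_prodform} directly (for the range of parameters treated there), and analytic continuation in $\alpha,\nu$ \`a la Theorem \ref{thm:bW_laplacerep} handles the rest; you never actually need to prove the $\eta$-identity as a separate statement. Where you genuinely diverge from the paper is the index-integral representation of $q$: the paper simply cites Equation (45) of \cite{sousaetal2018}, whereas you propose to read off that $\lambda \mapsto \bm{W}_{\!\alpha,\Delta_\lambda}(x)\bm{W}_{\!\alpha,\Delta_\lambda}(y)$ is the index Whittaker transform of $q(x,y,\cdot)$ and then invert via Theorem \ref{thm:iw_transf_L2iso}. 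That is a legitimate, more self-contained route, but note two caveats: Theorem \ref{thm:iw_transf_L2iso} is stated only for $\alpha > 0$, so your argument does not cover the full range $\alpha < \tfrac12$ without further justification; and the inversion formula gives equality in $L_2(\mathrm{m})$, so to obtain the pointwise identity claimed in the theorem you still need to check absolute convergence of the index integral (via \eqref{eq:bW_itau_asymp} and the asymptotics of $\rho$, as in the proof of Lemma \ref{lem:iw_transf_compactsupp}) together with continuity of both sides in $\xi$. The normalization statement is handled identically in both arguments.
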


\begin{proof}
This result follows from \cite[Theorem 3.1]{sousaetal2018} by applying an elementary change of variables. The representation as an index integral is a consequence of \cite[Equation (45)]{sousaetal2018}. The last statement is obtained by setting $\nu = {1 \over 2} - \alpha$ and recalling \eqref{eq:bW_equal1_param}.
\end{proof}

An upper bound for the kernel of the product formula \eqref{eq:bW_prodform} which will later be useful is
\begin{equation} \label{eq:bW_prodform_kernbound}
|q(x,y,\xi)| \leq {C \over xy\xi} (x^2+y^2+\xi^2)^{2\alpha} \exp\biggl( {1 \over 4x^2} + {1 \over 4\xi^2} - {y^2 \over 8(x\xi)^2} - {(x^2-\xi^2)^2 \over 8(xy\xi)^2}\biggr), \qquad y \in [0,M],\; x,\xi > 0
\end{equation}
where $C > 0$ is a constant depending only on $M$. This bound, which is valid for all $\alpha \in \mathbb{R}$, follows from the inequality ${x^2+y^2+\xi^2 \over 2xy\xi} \geq M^{-1}$ (valid for $x,\xi > 0$,\, $y \in [0,M]$) and the fact that the function $t^{-2\alpha} e^{t^2/4} D_{2\alpha}(t)$ is bounded on $[M^{-1},\infty)$, see \cite[Equation 8.4(1)]{erdelyiII1953}. It is worth noting that $q(x,y,\xi) \equiv q(y,x,\xi) \equiv q(\xi,x,y)$.

In addition, if $\alpha < {1 \over 2}$, we have the positivity condition
\begin{equation} \label{eq:iw_gentransl_positivity}
q(x,y,\xi) > 0 \qquad\quad (x, y, \xi > 0)
\end{equation}
which follows from the properties of the parabolic cylinder function (but would be highly nontrivial to prove if an explicit form for the kernel of the product formula were not available, cf.\ \cite{carlen2010,chebli1995}).

We now define the generalized translation operator induced by \eqref{eq:bW_prodform}, for $\alpha < {1 \over 2}$.

\begin{definition} \label{def:iw_gentransl}
Let $1 \leq p \leq \infty$. The linear operator
\begin{equation} \label{eq:iw_gentransl_def}
(\mathcal{T}^y f)(x) = \int_0^\infty \! f(\xi) q(x,y,\xi)\, \mathrm{m}(\xi) d\xi \qquad \bigl( f \in L_p(\mathrm{m}),\; x, y > 0 \bigr)
\end{equation}
will be called the \emph{index Whittaker translation}.
\end{definition}

As a first remark, we note that the last statement of Theorem \ref{thm:bW_prodform} means that
\begin{equation} \label{eq:iw_gentransl_1preserv}
\mathcal{T}^y \mathds{1} = \mathds{1} \qquad (y > 0)
\end{equation}
where $\mathds{1}$ denotes the function identically equal to one. The properties \eqref{eq:iw_gentransl_positivity} and \eqref{eq:iw_gentransl_1preserv} mean that the index Whittaker translation (and convolution) satisfies the \emph{hypergroup property} as defined e.g.\ in \cite[Section 3.2]{bakryhuet2008}; we however stress that, as discussed in the Introduction, the convolution algebra studied here does not satisfy the axiom scheme of hypergroups (cf.\ \cite{bloomheyer1995} and references therein) under which a comprehensive theory of harmonic analysis has been developed. 

Some important facts on the translation operator \eqref{eq:iw_gentransl_def} are summarized in the following proposition:

\begin{proposition} \label{prop:iw_gentransl_props}
Fix $y > 0$. Then: \\[-8pt]

\textbf{(a)} If $f \in L_\infty(\mathrm{m})$ is such that $0 \leq f \leq 1$, then $0 \leq \mathcal{T}^y f \leq 1$; \\[-8pt]

\textbf{(b)} For each $1 \leq p \leq \infty$, we have
\[
\|\mathcal{T}^y f\|_{p} \leq \|f\|_{p}  \qquad \text{ for all } f \in L_p(\mathrm{m})
\]
(in particular, $\mathcal{T}^y \bigl(L_p(\mathrm{m})\bigr) \subset L_p(\mathrm{m})$); \\[-8pt]

\textbf{(c)} If $f \in L_p(\mathrm{m})$ where $1 < p \leq \infty$, then $\mathcal{T}^y f \in \mathrm{C}(0,\infty)$ and, moreover, we have 
\[
\lim_{h \to 0} \|\mathcal{T}^{y+h} f - \mathcal{T}^y f \|_{p} = 0;
\]

\textbf{(d)} If $f \in \mathrm{C}_\mathrm{b}(0,\infty)$, then $(\mathcal{T}^y f)(x) \to f(y)$ as $x \to 0$; \\[-8pt]

\textbf{(e)} If $f \in L_\infty(\mathrm{m})$ is such that $\lim_{x \to \infty} f(x) = 0$, then $\lim_{x \to \infty} (\mathcal{T}^y f)(x) = 0$.
\end{proposition}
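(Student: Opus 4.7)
The plan hinges on the fact that, by Theorem \ref{thm:bW_prodform} and \eqref{eq:iw_gentransl_positivity}, the kernel $q(x,y,\xi)\mathrm{m}(\xi)d\xi$ is a probability measure in $\xi$ for each fixed $x,y > 0$; by the full symmetry $q(x,y,\xi) \equiv q(\xi,y,x)$ noted after \eqref{eq:bW_prodform_kernbound}, the same probabilistic interpretation holds after relabeling variables. Part (a) is then immediate from \eqref{eq:iw_gentransl_positivity} and \eqref{eq:iw_gentransl_1preserv}. For part (b), I would apply Jensen's inequality with $\phi(t)=|t|^p$ to the probability kernel to obtain $|\mathcal{T}^y f(x)|^p \leq \mathcal{T}^y(|f|^p)(x)$ for $1 \leq p < \infty$; integrating against $\mathrm{m}(x)dx$, applying Fubini and using $\int_0^\infty q(x,y,\xi)\mathrm{m}(x)dx = 1$ (by the symmetry of $q$) would yield $\|\mathcal{T}^y f\|_p \leq \|f\|_p$, while the case $p=\infty$ is immediate from (a).

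\textbf{Part (c).} For the continuity in $x$, I would write $\mathcal{T}^y f(x_1) - \mathcal{T}^y f(x_2) = \int f(\xi)(q(x_1,y,\xi)-q(x_2,y,\xi))\mathrm{m}(\xi)d\xi$, apply Hölder with the conjugate exponent $p' = p/(p-1)$ and use the explicit kernel bound \eqref{eq:bW_prodform_kernbound} together with the joint continuity of $q$ to invoke dominated convergence on the $L_{p'}(\mathrm{m})$-norm of $q(x_1,y,\cdot)-q(x_2,y,\cdot)$. For the $L_p$-continuity in $y$, I would first establish the result for $f$ in a dense subclass (say $\mathrm{C}_\mathrm{c}^2(0,\infty)$ when $1<p<\infty$, or $\mathrm{C}_0(0,\infty)$ when $p=\infty$) via dominated convergence with the same bound, then extend to general $f \in L_p(\mathrm{m})$ by an $\eps/3$-argument exploiting the uniform contraction from~(b).

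\textbf{Part (d).} The core step is the weak convergence $\mu_x := q(x,y,\xi)\mathrm{m}(\xi)d\xi \warrow \delta_y$ as $x \to 0^+$. By Theorem \ref{thm:bW_prodform} one has $\widehat{\mu_x}(\lambda) = \bm{W}_{\!\alpha,\Delta_\lambda}(x)\bm{W}_{\!\alpha,\Delta_\lambda}(y)$, which by \eqref{eq:bW_limderivatives} converges pointwise in $\lambda$ to $\bm{W}_{\!\alpha,\Delta_\lambda}(y) = \widehat{\delta_y}(\lambda)$ as $x \to 0$; since this limit is continuous in $\lambda$ (Proposition \ref{prop:iwmeas_props}(i)), Proposition \ref{prop:iwmeas_props}(iv) yields the claimed weak convergence. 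To conclude (d) for $f \in \mathrm{C}_\mathrm{b}(0,\infty)$, which need not admit a continuous extension to $[0,\infty)$, I would fix $\delta \in (0,y)$ and let $\tilde f$ be the bounded continuous extension of $f\restrict{[\delta,\infty)}$ to $[0,\infty)$ obtained by setting $\tilde f \equiv f(\delta)$ on $[0,\delta]$; weak convergence applied to $\tilde f \in \mathrm{C}_\mathrm{b}[0,\infty)$ gives $\int \tilde f\, d\mu_x \to f(y)$, while $\bigl|\int (f-\tilde f)\, d\mu_x\bigr| \leq 2\|f\|_\infty \mu_x([0,\delta])$, which tends to $0$ by Portmanteau applied to the closed set $[0,\delta]$ not containing $y$.

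\textbf{Part (e) and main obstacle.} Given $f \in L_\infty(\mathrm{m})$ vanishing at infinity and $\eps > 0$, I would choose $R > 0$ such that $|f(\xi)| < \eps$ for a.e.\ $\xi > R$, giving
\[
|\mathcal{T}^y f(x)| \leq \|f\|_\infty \int_0^R q(x,y,\xi)\mathrm{m}(\xi)d\xi + \eps,
\]
so that the statement reduces to showing the first integral tends to $0$ as $x \to \infty$. Here the plan is to invoke the kernel bound \eqref{eq:bW_prodform_kernbound}: for $x \geq 2R$ and $\xi \in (0,R]$ one has $(x^2-\xi^2)^2 \geq \tfrac{9}{16}x^4$, hence $(x^2-\xi^2)^2/(8(xy\xi)^2) \geq c\, x^2/\xi^2$ for a constant $c > 0$ depending only on $y$, producing super-exponential decay in $x$. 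Distributing this decay carefully between absorbing the polynomial prefactor $(x^2+y^2+\xi^2)^{2\alpha}$ and combining with the $\exp(-1/(4\xi^2))$ factor coming from $\mathrm{m}(\xi)$ to handle the singularity at $\xi = 0$, one obtains an integrable dominator on $(0,R]$ that vanishes as $x \to \infty$, and dominated convergence closes the argument. I expect the main technical obstacle to be precisely this juggling of competing exponentials and polynomial prefactors in \eqref{eq:bW_prodform_kernbound}, where one has to extract a single $x$-dependent decay that simultaneously survives the singularity at $\xi = 0$ and dominates the $(x^2)^{2\alpha}$ growth; a secondary difficulty is the $L_\infty$-continuity claim in (c), which may need the stronger hypothesis $f \in \mathrm{C}_0(0,\infty)$ in order to rule out oscillatory $L_\infty$ pathologies.
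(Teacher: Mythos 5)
Your overall route is sound and is genuinely different from the paper's: the paper disposes of this proposition in one line by citing the corresponding statement of the companion manuscript \cite{sousaetal2018}, whereas you rebuild everything from the two ingredients available inside this paper, namely the normalization and full symmetry of $q$ from Theorem \ref{thm:bW_prodform} and the kernel bound \eqref{eq:bW_prodform_kernbound}, plus the measure-theoretic toolbox of Proposition \ref{prop:iwmeas_props}. Parts (a) and (b) are complete as written (the probability-kernel viewpoint plus Jensen and Tonelli, with $\int_0^\infty q(x,y,\xi)\,\mathrm{m}(x)\,dx=1$ coming from the symmetry $q(x,y,\xi)=q(\xi,y,x)$). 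Part (d) is a nice alternative to a direct kernel estimate: passing through $\widehat{\mu_x}(\lambda)=\bm{W}_{\!\alpha,\Delta_{\lambda\!}}(x)\bm{W}_{\!\alpha,\Delta_{\lambda\!}}(y)$ and Proposition \ref{prop:iwmeas_props}(ii),(iv) gives $\mu_x\warrow\delta_y$ cleanly, and your cutoff near the origin correctly handles the fact that $f\in\mathrm{C}_\mathrm{b}(0,\infty)$ need not extend continuously to $0$. Part (e) works and is less delicate than you fear: for $\xi\in(0,R]$ and $x\ge 2R$ the term $(x^2-\xi^2)^2/(8(xy\xi)^2)\ge c\,x^2/\xi^2\ge c\,x^2/R^2$ already kills every polynomial prefactor, while the factor $e^{1/(4\xi^2)}$ in \eqref{eq:bW_prodform_kernbound} combines with $\mathrm{m}(\xi)$ to give $e^{-1/(4\xi^2)}$, so $\xi^{-4\alpha}e^{-1/(4\xi^2)}$ is bounded on $(0,R]$ and $\int_0^R q(x,y,\xi)\,\mathrm{m}(\xi)\,d\xi$ decays like a Gaussian in $x$; there is no real juggling to do.

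The one genuine gap is the case $p=\infty$ of the second assertion in (c), and your own hedge there is warranted: the density argument is not merely harder in that case, it is unavailable, because $\mathrm{C}_0(0,\infty)$ (or $\mathrm{C}_\mathrm{c}$) is not dense in $L_\infty(\mathrm{m})$ and the contraction bound from (b) gives you nothing beyond the dense subclass. Worse, no argument of the form $|\mathcal{T}^{y+h}f-\mathcal{T}^yf|\le\|f\|_\infty\int_0^\infty|q(x,y+h,\xi)-q(x,y,\xi)|\,\mathrm{m}(\xi)\,d\xi$ can close it uniformly in $x$: by the concentration $\delta_x*\delta_y\warrow\delta_y$ as $x\to 0$ established in your part (d), for each fixed $h>0$ the measures $\delta_x*\delta_{y+h}$ and $\delta_x*\delta_y$ become nearly mutually singular for small $x$, so $\sup_{x>0}\int_0^\infty|q(x,y+h,\xi)-q(x,y,\xi)|\,\mathrm{m}(\xi)\,d\xi$ does not tend to $0$ with $h$. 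Equivalently, via $(\mathcal{T}^yf)(x)=(\mathcal{T}^xf)(y)$ the claim amounts to equicontinuity at $y$ of the family $\{(\mathcal{T}^xf)(\cdot)\}_{x>0}$, which one should not expect for discontinuous $f$ (test $f=\mathds{1}_{[0,y]}$). So this piece of the proposition cannot be recovered by your scheme; it has to be either restricted to continuous $f$ or imported from \cite{sousaetal2018} as the paper does. Everything else in your proposal stands.
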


\begin{proof}
All the properties are a direct consequence of the corresponding statements in \cite[Proposition 4.3]{sousaetal2018}, taking into account the elementary connection between the operator $\mathcal{T}^y$ from Definition \ref{def:iw_gentransl} and the translation operator defined in \cite[Definition 4.1]{sousaetal2018}. Alternatively, a direct proof can be given by using similar arguments.
\end{proof}

We observe that, as a consequence of Proposition \ref{prop:iw_gentransl_props}, the index Whittaker translation \eqref{eq:iw_gentransl_def} (with the convention that $(\mathcal{T}^x f)(0) = (\mathcal{T}^0 f)(x) = f(x)$ for all $x$) satisfies the properties
\begin{equation} \label{eq:iw_gentransl_contpreserv}
\mathcal{T}^y \bigl(\mathrm{C}_\mathrm{b}[0,\infty)\bigr) \subset \mathrm{C}_\mathrm{b}[0,\infty) \qquad \text{and} \qquad \mathcal{T}^y \bigl(\mathrm{C}_0[0,\infty)\bigr) \subset \mathrm{C}_0[0,\infty) \qquad (y \geq 0),
\end{equation}
as well as the obvious symmetry property
\begin{equation} \label{eq:iw_gentransl_symm}
(\mathcal{T}^y f)(x) = (\mathcal{T}^x f)(y) \qquad (x,y \geq 0).
\end{equation}
It is also easy to check that the index Whittaker translation is symmetric with respect to the measure $\mathrm{m}(x)dx$, in the sense that for $f,g \in \mathrm{C}_\mathrm{b}[0,\infty) \cap  L_1\bigl(\mathrm{m})$ we have
\begin{equation} \label{eq:iw_gentransl_symmweig}
\int_0^\infty (\mathcal{T}^y f)(x) g(x) \mathrm{m}(x) dx = \int_0^\infty \! f(x) (\mathcal{T}^y g)(x) \mathrm{m}(x) dx.
\end{equation}

We may now define, in the natural way, the generalized convolution associated with the translation operator \eqref{eq:iw_gentransl_def}:

\begin{definition}
Let $\mu, \nu \in \mathcal{M}_{b}[0,\infty)$. The measure $\mu * \nu$ defined by
\begin{equation} \label{eq:iwconv_def}
\int_{[0,\infty)\!} f(x)\, (\mu * \nu)(dx) = \int_{[0,\infty)\!} \int_{[0,\infty)\!} (\mathcal{T}^y f)(x)\, \mu(dx) \nu(dy), \qquad f \in \mathrm{C}_\mathrm{b}[0,\infty)
\end{equation}
is called the \emph{index Whittaker convolution} of the measures $\mu$ and $\nu$.
\end{definition}

Due to \eqref{eq:iw_gentransl_1preserv} and \eqref{eq:iw_gentransl_positivity}, the index Whittaker convolution of two probability measures $\mu, \nu \in \mathcal{P}[0,\infty)$ is also a probability measure. Furthermore, Lemma \ref{lem:iwconv_hatprod} below shows that the index Whittaker convolution is commutative and associative. Consequently: 

\begin{proposition}
The space $(\mathcal{M}_\mathbb{C}[0,\infty),*)$ is an algebra over $\mathbb{C}$ whose identity element is the Dirac measure $\delta_0$.
\end{proposition}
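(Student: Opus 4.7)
The plan is to verify the defining axioms of a $\mathbb{C}$-algebra: that $*$ is bilinear, that it admits $\delta_0$ as two-sided identity, and that it is associative (the last of which, together with commutativity, is deferred to the lemma cited immediately after the statement).

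First I would confirm that $\mu * \nu$ genuinely defines an element of $\mathcal{M}_\mathbb{C}[0,\infty)$. For positive $\mu,\nu \in \mathcal{M}_b[0,\infty)$, the estimate $0 \leq \mathcal{T}^y f \leq \|f\|_\infty$ (obtained from Proposition \ref{prop:iw_gentransl_props}(a) combined with $\mathcal{T}^y \mathds{1} = \mathds{1}$, after writing a bounded nonnegative $f$ as $\|f\|_\infty$ times a function into $[0,1]$) shows that the right-hand side of \eqref{eq:iwconv_def} defines a bounded positive linear functional on $\mathrm{C}_\mathrm{b}[0,\infty)$; restricting to $\mathrm{C}_0[0,\infty)$ and invoking the Riesz--Markov--Kakutani theorem produces a unique finite positive measure $\mu * \nu$ of total mass $\mu([0,\infty))\nu([0,\infty))$. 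The extension to complex measures is then routine via the Jordan decomposition, and bilinearity is immediate from the linearity of the double integral in $\mu$ and in $\nu$.

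Next I would check that $\delta_0$ is a two-sided identity. For any $f \in \mathrm{C}_\mathrm{b}[0,\infty)$, using the convention $(\mathcal{T}^0 f)(x) = (\mathcal{T}^x f)(0) = f(x)$ recorded before \eqref{eq:iw_gentransl_contpreserv}, a direct application of \eqref{eq:iwconv_def} gives
\[
\int_{[0,\infty)\!} f\, d(\mu * \delta_0) = \int_{[0,\infty)\!} (\mathcal{T}^0 f)(x)\, \mu(dx) = \int_{[0,\infty)\!} f\, d\mu,
\]
and the identical computation with the roles swapped yields $\delta_0 * \mu = \mu$. Commutativity $\mu * \nu = \nu * \mu$ then follows by inserting the symmetry \eqref{eq:iw_gentransl_symm} into the defining integral and applying Fubini, while associativity is precisely the content of Lemma \ref{lem:iwconv_hatprod} and may be cited.

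The only mildly delicate point is the measure-theoretic legitimacy of \eqref{eq:iwconv_def}: one must know that $(x,y)\mapsto (\mathcal{T}^y f)(x)$ is jointly Borel measurable so that Fubini applies. This can be handled by noting that Proposition \ref{prop:iw_gentransl_props}(c) gives continuity of $\mathcal{T}^y f$ in $x$ for fixed $y > 0$, and the symmetry \eqref{eq:iw_gentransl_symm} transfers this to continuity in $y$ for fixed $x > 0$; a bounded separately continuous function on $(0,\infty)^2$ is Borel measurable (as a pointwise limit of continuous functions), and the boundary values $x=0$ or $y=0$ contribute null sets under any locally finite reference measure, so the iterated integral in \eqref{eq:iwconv_def} is well-defined. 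This is the main, and only modest, obstacle; the rest of the verification is bookkeeping.
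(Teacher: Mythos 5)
Your proposal is correct and follows essentially the same route as the paper: well-definedness of $\mu*\nu$ comes from positivity and mass-preservation of $\mathcal{T}^y$ (i.e.\ \eqref{eq:iw_gentransl_positivity} and \eqref{eq:iw_gentransl_1preserv}), bilinearity is immediate, and commutativity and associativity are obtained from Proposition \ref{lem:iwconv_hatprod} together with the injectivity of the index Whittaker transform on measures (Proposition \ref{prop:iwmeas_props}(ii)); your direct Fubini argument for commutativity via \eqref{eq:iw_gentransl_symm} is an equally valid, slightly more elementary alternative for that one axiom. One small correction: your claim that the boundary $\{x=0\}\cup\{y=0\}$ "contributes null sets under any locally finite reference measure" is false, since $\mu$ or $\nu$ may charge the origin (e.g.\ $\mu=\delta_0$); this does not damage the argument, because on each boundary line the convention $(\mathcal{T}^0 f)(x)=(\mathcal{T}^x f)(0)=f(x)$ makes the integrand continuous there, so joint Borel measurability on all of $[0,\infty)^2$ follows by gluing the open quadrant to the two boundary pieces rather than by discarding them.
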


Since $\int_{[0,\infty)\!} f(\xi) (\delta_x * \delta_y)(d\xi) = (\mathcal{T}^y f)(x)$, the fact that $q(x,y,\xi)$ is strictly positive for $x,y,\xi > 0$ yields that $\mathrm{supp}(\delta_x * \delta_y) = [0,\infty)$ for all $x,y > 0$, in sharp contrast with the compactness axiom (H\textsubscript{C}) which is part of the definition of a hypergroup (cf.\ Introduction). It is worth mentioning that positive product formulas which lead to convolution operators not satisfying the hypergroup requirements on $\mathrm{supp}(\delta_x * \delta_y)$ have also been found for certain families of orthogonal polynomials  \cite{connettschwartz1995}.

We now state the fundamental connection between the index Whittaker transform and convolution:

\begin{proposition} \label{lem:iwconv_hatprod}
Let $\mu, \mu_1, \mu_2 \in \mathcal{M}_{b}[0,\infty)$. We have $\mu = \mu_1 * \mu_2$ if and only if
\[
\widehat{\mu}(\lambda) = \widehat{\mu_1}(\lambda)\, \widehat{\mu_2}(\lambda) \qquad \text{for all } \lambda \geq 0. 
\]
\end{proposition}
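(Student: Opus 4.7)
The plan is to reduce both directions to the product formula of Theorem \ref{thm:bW_prodform} combined with the injectivity of the index Whittaker transform on measures (Proposition \ref{prop:iwmeas_props}(ii)).

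First, I would observe the key consequence of the product formula: for each fixed $\lambda \geq 0$, applying Theorem \ref{thm:bW_prodform} with $\nu = \Delta_\lambda$ gives
\[
(\mathcal{T}^y \bm{W}_{\!\alpha,\Delta_\lambda})(x) = \bm{W}_{\!\alpha,\Delta_\lambda}(x)\, \bm{W}_{\!\alpha,\Delta_\lambda}(y) \qquad (x,y \geq 0).
\]
Note that $\bm{W}_{\!\alpha,\Delta_\lambda}(\cdot) \in \mathrm{C}_\mathrm{b}[0,\infty)$ by the continuous extension to $x = 0$, the bound \eqref{eq:bW_ineq_param}, and the asymptotic behavior \eqref{eq:bW_liminfty}, so this function is a legitimate test function in the definition \eqref{eq:iwconv_def} of the index Whittaker convolution.

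For the forward implication, assume $\mu = \mu_1 * \mu_2$. Choosing $f = \bm{W}_{\!\alpha,\Delta_\lambda}$ in \eqref{eq:iwconv_def} and applying the identity above, I compute
\[
\widehat{\mu}(\lambda) = \int_{[0,\infty)\!} \bm{W}_{\!\alpha,\Delta_\lambda}(x)\, (\mu_1*\mu_2)(dx) = \int_{[0,\infty)\!}\!\int_{[0,\infty)\!} \bm{W}_{\!\alpha,\Delta_\lambda}(x)\, \bm{W}_{\!\alpha,\Delta_\lambda}(y)\, \mu_1(dx)\mu_2(dy) = \widehat{\mu_1}(\lambda)\,\widehat{\mu_2}(\lambda),
\]
where the factorization of the double integral is valid by Fubini's theorem since both measures are finite and the integrand is bounded.

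For the converse, first I note that $\mu_1 * \mu_2 \in \mathcal{M}_b[0,\infty)$ is well-defined (with total mass $\mu_1([0,\infty))\,\mu_2([0,\infty))$), since $\mathcal{T}^y$ preserves $\mathrm{C}_\mathrm{b}[0,\infty)$ by \eqref{eq:iw_gentransl_contpreserv}. Applying the forward direction to the pair $(\mu_1,\mu_2)$ yields $\widehat{\mu_1 * \mu_2}(\lambda) = \widehat{\mu_1}(\lambda)\,\widehat{\mu_2}(\lambda) = \widehat{\mu}(\lambda)$ for every $\lambda \geq 0$. The uniqueness part of Proposition \ref{prop:iwmeas_props}(ii) then forces $\mu = \mu_1 * \mu_2$, which finishes the proof.

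No real obstacles arise here; the one point that needs a moment of care is confirming that $\bm{W}_{\!\alpha,\Delta_\lambda}$ lies in the class $\mathrm{C}_\mathrm{b}[0,\infty)$ used in the definition \eqref{eq:iwconv_def}, and that the double integral genuinely factors—both of which follow immediately from \eqref{eq:bW_ineq_param} and finiteness of the measures.
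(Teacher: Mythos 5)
Your proof is correct and follows exactly the paper's argument: the product formula of Theorem \ref{thm:bW_prodform} gives $(\mathcal{T}^y \bm{W}_{\!\alpha,\Delta_\lambda})(x) = \bm{W}_{\!\alpha,\Delta_\lambda}(x)\bm{W}_{\!\alpha,\Delta_\lambda}(y)$, from which the ``only if'' direction is a direct computation, and the converse follows from the uniqueness statement in Proposition \ref{prop:iwmeas_props}(ii). The extra checks you record (that $\bm{W}_{\!\alpha,\Delta_\lambda}$ is an admissible test function and that the double integral factors) are sensible but routine, and do not change the route.
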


\begin{proof}
In view of \eqref{eq:bW_prodform}, we have $\bigl(\mathcal{T}^y \bm{W}_{\!\alpha,\Delta_{\lambda\!}}\bigr)(x) = \bm{W}_{\!\alpha,\Delta_{\lambda\!}}(x) \bm{W}_{\!\alpha,\Delta_{\lambda\!}}(y)$, hence
\begin{align*}
\widehat{\mu_1 * \mu_2}(\lambda) & = \int_{[0,\infty)\!\!} \bm{W}_{\!\alpha,\Delta_{\lambda\!}}(x) \, (\mu_1 * \mu_2)(dx) \\
& = \int_{[0,\infty)\!} \int_{[0,\infty)\!} \bigl(\mathcal{T}^y \bm{W}_{\!\alpha,\Delta_{\lambda\!}}\bigr)(x)\, \mu_1(dx) \mu_2(dy)\\
& = \int_{[0,\infty)\!} \int_{[0,\infty)\!\!} \bm{W}_{\!\alpha,\Delta_{\lambda\!}}(x) \bm{W}_{\!\alpha,\Delta_{\lambda\!}}(y) \, \mu_1(dx) \mu_2(dy) \: = \: \widehat{\mu_1}(\lambda) \widehat{\mu_2}(\lambda), \qquad\;\; \lambda \geq 0.
\end{align*}
This proves the ``only if" part, and the converse follows from the uniqueness property in Proposition \ref{prop:iwmeas_props}(ii).
\end{proof}

\subsection{Further properties of the index Whittaker translation}

It is straightforward to show, by a computation similar to that in the proof of Proposition \ref{lem:iwconv_hatprod}, that the index Whittaker translation operator is connected with the index Whittaker transform via the identity 
\begin{equation} \label{eq:iw_gentransl_conn}
\widehat{\mathcal{T}^y f}(\lambda) = \bm{W}_{\!\alpha,\Delta_{\lambda\!}}(y) \widehat{f}(\lambda) \qquad\quad (f \in \mathrm{C}_\mathrm{c}[0,\infty), \; y, \lambda \geq 0).
\end{equation}
From this identity, we obtain the following proposition.

\begin{proposition}
For each $f \in \mathrm{C}_\mathrm{b}[0,\infty)$ and $x,\xi \geq 0$:
\begin{equation} \label{eq:iw_gentransl_symmxy}
\mathcal{T}^\xi \mathcal{T}^y f = \mathcal{T}^y \mathcal{T}^\xi f, \qquad f \in \mathrm{C}_\mathrm{b}[0,\infty).
\end{equation}
\end{proposition}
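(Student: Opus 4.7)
The plan is to express both $\mathcal{T}^\xi \mathcal{T}^y f$ and $\mathcal{T}^y \mathcal{T}^\xi f$ as integrals of $f$ against a triple Dirac convolution, and then to conclude by uniqueness of the index Whittaker transform of finite measures.

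First, I would fix $x, y, \xi \geq 0$ and iterate the defining relation \eqref{eq:iwconv_def}. Since $\mathcal{T}^y f \in \mathrm{C}_\mathrm{b}[0,\infty)$ by \eqref{eq:iw_gentransl_contpreserv}, applying \eqref{eq:iwconv_def} once with $\mu = \delta_x,\, \nu = \delta_\xi$ to the bounded continuous function $\mathcal{T}^y f$, and a second time with $\mu = \delta_x * \delta_\xi,\, \nu = \delta_y$ applied to $f$, gives
\[
(\mathcal{T}^\xi \mathcal{T}^y f)(x) \;=\; \int_{[0,\infty)\!} (\mathcal{T}^y f)(z)\,(\delta_x * \delta_\xi)(dz) \;=\; \int_{[0,\infty)\!} f\, d\bigl((\delta_x * \delta_\xi) * \delta_y\bigr),
\]
and the symmetric computation yields $(\mathcal{T}^y \mathcal{T}^\xi f)(x) = \int_{[0,\infty)\!} f\, d\bigl((\delta_x * \delta_y) * \delta_\xi\bigr)$.

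Next, I would invoke Proposition \ref{lem:iwconv_hatprod} twice: both triple convolutions have index Whittaker transform equal to the triple product
\[
\bm{W}_{\!\alpha,\Delta_{\lambda\!}}(x)\,\bm{W}_{\!\alpha,\Delta_{\lambda\!}}(\xi)\,\bm{W}_{\!\alpha,\Delta_{\lambda\!}}(y),
\]
which is plainly symmetric under the exchange $\xi \leftrightarrow y$. The uniqueness property of the index Whittaker transform of finite measures (Proposition \ref{prop:iwmeas_props}(ii)) then forces the identity $(\delta_x * \delta_\xi) * \delta_y = (\delta_x * \delta_y) * \delta_\xi$ in $\mathcal{M}_b[0,\infty)$, and integrating $f$ against this common measure delivers the claim at every $x$.

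No genuine obstacle arises: the argument is a routine consequence of the measure-algebra machinery already developed. The only piece of bookkeeping worth checking is the iteration of \eqref{eq:iwconv_def} with the non-Dirac first argument $\delta_x * \delta_\xi$, which is unproblematic because $\delta_x * \delta_\xi$ is a probability measure and $f$ is bounded, so all integrals are finite and Fubini applies.
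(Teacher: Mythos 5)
Your argument is correct. You establish $(\mathcal{T}^\xi \mathcal{T}^y f)(x) = \int f\, d\bigl((\delta_x * \delta_\xi) * \delta_y\bigr)$ by iterating \eqref{eq:iwconv_def}, and then deduce the symmetry in $\xi \leftrightarrow y$ from Proposition \ref{lem:iwconv_hatprod} together with the uniqueness statement of Proposition \ref{prop:iwmeas_props}(ii); both ingredients are available at that point in the paper, and the bookkeeping you flag (applying \eqref{eq:iwconv_def} with first argument $\delta_x * \delta_\xi$) is indeed harmless since that measure is a probability measure and $\mathcal{T}^y f \in \mathrm{C}_\mathrm{b}[0,\infty)$ by \eqref{eq:iw_gentransl_contpreserv}.

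The paper takes a slightly different, function-side route: it derives the proposition directly from the identity \eqref{eq:iw_gentransl_conn}, $\widehat{\mathcal{T}^y f}(\lambda) = \bm{W}_{\!\alpha,\Delta_{\lambda\!}}(y)\, \widehat{f}(\lambda)$, so that $\widehat{\mathcal{T}^\xi \mathcal{T}^y f} = \bm{W}_{\!\alpha,\Delta_{\lambda\!}}(\xi)\, \bm{W}_{\!\alpha,\Delta_{\lambda\!}}(y)\, \widehat{f}$ is manifestly symmetric and injectivity of the transform finishes the job. Both proofs ultimately rest on the same fact — the triple product of kernels is symmetric under the index Whittaker transform — but your measure-level formulation is arguably the more robust of the two: \eqref{eq:iw_gentransl_conn} is stated only for $f \in \mathrm{C}_\mathrm{c}[0,\infty)$, so the paper's route implicitly requires an extra approximation step to reach all of $\mathrm{C}_\mathrm{b}[0,\infty)$, whereas your reduction to the single measure identity $(\delta_x * \delta_\xi) * \delta_y = (\delta_x * \delta_y) * \delta_\xi$ (i.e.\ associativity and commutativity of the convolution on Dirac measures) handles every bounded continuous $f$ at once.
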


Next we investigate the properties of the generalized translation of $f \in \mathcal{E}$, where
\begin{equation} \label{eq:iw_transl_Espace_def}
\mathcal{E} := \biggl\{f \in L_0(0,\infty): \; |f(x)| \leq b_{1\,} \exp\biggl(\frac{1}{2x^2} + b_2(x^{-\beta\!} + x^{\beta})\biggr) \; \text{ for some } b_1, b_2 \geq 0 \text{ and } 0 \leq \beta < 2\biggr\}
\end{equation}
being $L_0(0,\infty)$ the space of Lebesgue measurable functions $f:(0,\infty) \to \mathbb{R}$. The reader should note that the condition $f \in \mathcal{E}$ ensures that for each $x, y > 0$ the index Whittaker translation $(\mathcal{T}^y f)(x) = \int_0^\infty \! f(\xi) q(x,y,\xi)\, \mathrm{m}(\xi) d\xi$ exists as an absolutely convergent integral, as can be verified using \eqref{eq:bW_prodform_kernbound}.

\begin{lemma} \label{lem:iw_transl_Espace_lem1}
Fix $y, M > 0$. Let $f \in \mathcal{E}$ and $p \in \mathbb{N}_0$. Then, for each $\eps > 0$ there exists $\delta, M_0 > 0$ such that
\[
\int_{E_{\mathsmaller{M}}}\ \! |f(\xi)|\, \Bigl|{\partial^p \over \partial x^p} q(x,y,\xi)\Bigr| \mathrm{m}(\xi) \, d\xi < \eps \qquad \text{ for all } x \in (0,\delta] \text{ and } M \geq M_0
\]
where $E_{M} = (0,\tfrac{1}{M}] \cup [M,\infty)$.
\end{lemma}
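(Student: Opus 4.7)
The plan is to first bound $|\partial_x^p q(x,y,\xi)|$ by the same exponential as in \eqref{eq:bW_prodform_kernbound} times an acceptable polynomial-type factor, and then exploit the kernel's super-exponential decay at both ends of $E_M$. For the first step, I would differentiate $q$ inductively in $x$: each $\partial_x$ acts on $(xy\xi)^{-1+2\alpha}$ and on $e^{\mathrm{expn}}$ (producing rational factors in $x,\xi$) and on $D_{2\alpha}\bigl(\tfrac{x^2+y^2+\xi^2}{2xy\xi}\bigr)$ via the recurrence \cite[Equation 8.2.(14)]{erdelyiII1953}, which converts $D_{2\alpha}'$ into shifted $D_{2\alpha+j}$. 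Applying the bound \eqref{eq:tilW_laplacerep_pf2} to each shifted $D$-factor (reducing to $\mathrm{Re}\,\alpha\leq 0$, if necessary, by the same recurrence) and combining the resulting $e^{-t^2/4}$ with $e^{\mathrm{expn}}$, one recovers precisely the exponent $E := \tfrac{1}{4x^2}+\tfrac{1}{4\xi^2}-\tfrac{y^2}{8(x\xi)^2}-\tfrac{(x^2-\xi^2)^2}{8(xy\xi)^2}$ of \eqref{eq:bW_prodform_kernbound}, yielding
\[
|\partial_x^p q(x,y,\xi)| \leq C(y,p)\,\tilde R(x,\xi)\,\exp\bigl(E(x,y,\xi)\bigr)
\]
with $\tilde R$ a finite linear combination of monomials in $x,\xi,1/x,1/\xi$.

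Together with $|f(\xi)|\mathrm{m}(\xi) \leq b_1 \xi^{1-4\alpha} e^{b_2(\xi^{-\beta}+\xi^\beta)}$, which follows from $f\in\mathcal{E}$ and the cancellation of $e^{1/(2\xi^2)}$ against $\mathrm{m}(\xi)$, I would analyze $(0,1/M]$ and $[M,\infty)$ separately through the algebraic rewriting
\[
E = \tfrac{1}{4y^2} + \tfrac{1}{x^2}\bigl(\tfrac{1}{4}-\tfrac{\xi^2}{8y^2}\bigr) + \tfrac{1}{\xi^2}\bigl(\tfrac{1}{4}-\tfrac{x^2}{8y^2}\bigr) - \tfrac{y^2}{8(x\xi)^2}
\]
obtained from $(x^2-\xi^2)^2 = x^4-2x^2\xi^2+\xi^4$. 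On $(0,1/M]$, splitting the last (negative) term evenly and absorbing the two halves into the two brackets, I obtain, for $M > 2/y$ and $\delta < y/2$, the bound $E\leq C_y - A(M)/x^2 - B(\delta)/\xi^2$ with $A(M),B(\delta)>0$ that grow without bound as $M\to\infty$ and $\delta\to 0$. Since $\beta<2$, both $\xi^{-s}e^{b_2\xi^{-\beta}-B(\delta)/\xi^2}$ (on $(0,1/M]$) and $x^{-r}e^{-A(M)/(2x^2)}$ (on $(0,\delta]$) are bounded; consequently $\int_0^{1/M} |f|\mathrm{m}|\partial_x^p q|\,d\xi \leq C e^{-A(M)/(2x^2)}$, which is $<\eps/2$ once $\delta$ is taken sufficiently small.

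On $[M,\infty)$ with $M>2y$, the inequality $\tfrac{1}{x^2}(\tfrac{1}{4}-\tfrac{\xi^2}{8y^2}) \leq -\xi^2/(16 x^2 y^2)$ yields $E\leq C_y -\xi^2/(16 x^2 y^2)$; the elementary estimate $\sup_{x>0} x^{-a} e^{-\xi^2/(32 x^2 y^2)} \leq c_a \xi^{-a}$ then absorbs the negative powers of $x$ in $\tilde R$ into negative powers of $\xi$, while the remaining Gaussian $e^{-\xi^2/(32\delta^2 y^2)}$ dominates $e^{b_2\xi^\beta}$ since $\beta<2$. Hence $\int_M^\infty |f|\mathrm{m}|\partial_x^p q|\,d\xi \to 0$ as $M\to\infty$ uniformly in $x\in(0,\delta]$, and is $<\eps/2$ for $M\geq M_0$. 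The principal difficulty is the first step: controlling the combinatorial growth of shifted $D$-factors and rational prefactors under repeated differentiation, and verifying that the combined exponent still collapses to $E$ exactly. The remaining exponent analysis is elementary but relies crucially on the hypothesis $\beta<2$ at both ends of $E_M$.
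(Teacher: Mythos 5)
Your argument is correct and follows essentially the same route as the paper's: derivatives of $q$ are reduced to kernel-type bounds via the parabolic-cylinder recurrence (the paper packages this step as the closed-form identity \eqref{eq:bW_prodform_kernderiv}, which writes $\partial_x q_\alpha$ in terms of $q_{\alpha+1/2}$ and $q_\alpha$ with rational prefactors and thereby reduces everything to the $p=0$ case of \eqref{eq:bW_prodform_kernbound}), and the tails over $E_M$ are then controlled, uniformly for small $x$, by the Gaussian decay of the exponent in $1/\xi$ near $0$ and in $\xi$ near $\infty$, which dominates $e^{b_2(\xi^{-\beta}+\xi^{\beta})}$ precisely because $\beta<2$. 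The only organizational difference is that the paper carries out the exponent analysis with the single combined square $-(x^2+\xi^2-y^2)^2/(4xy\xi)^2$ on the set $\{|\xi^2-y^2|\geq\delta\}$ instead of treating the two components of $E_M$ separately.
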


\begin{proof}
Fix $k \geq -1 + \max\{2\alpha,0\}$. Note that (after a new choice of $b_2$ and $\beta$) the function $\xi \mapsto \xi^{k}f(\xi)$ also belongs to $\mathcal{E}$. Let $\delta < {y^2 \over 2}$. If $|\xi^2 - y^2| \geq \delta$ and $x^2 \leq {\delta \over 2}$, using \eqref{eq:bW_prodform_kernbound}, the boundedness of the function $|t|^{k+1} e^{-|t|^2}$ and the inequalities
\begin{align*}
& {(x^2 + y^2 + \xi^2)^{2\alpha} \over |x^2 + \xi^2 - y^2|^{k+1}} \leq \biggl(1+{4y^2 \over \delta}\biggr)^{\!2\alpha}\Bigl({\delta \over 2}\Bigr)^{\!2\alpha - k - 1}, & \hspace{-2cm} \alpha \geq 0 \\[2pt]
& {(x^2 + y^2 + \xi^2)^{2\alpha} \over |x^2 + \xi^2 - y^2|^{k+1}} \leq y^{4\alpha} \Bigl({\delta \over 2}\Bigr)^{\!- k - 1}, & \hspace{-2cm} \alpha \leq 0
\end{align*}
we find that
\begin{equation} \label{eq:iw_transl_Espace_lem1_pf}
\begin{aligned}
x^{-k} |f(\xi)|\, q_\alpha(x,y,\xi) \, \mathrm{m}(\xi) & \leq {C \over (xy\xi)^{k+1}} (x^2+y^2+\xi^2)^{2\alpha} \exp\biggl( b_2(\xi^{-\beta\!} + \xi^{\beta}) - {(x^2+\xi^2-y^2)^2 \over 8(xy\xi)^2}\biggr)  \\[2pt]
& \leq C \, \exp\biggl( b_2(\xi^{-\beta\!} + \xi^{\beta}) - {(x^2+\xi^2-y^2)^2 \over (4xy\xi)^2}\biggr), & \hspace{-2.2cm} |\xi^2 - y^2| \geq \delta, \; x^2 \leq \tfrac{\delta}{2}
\end{aligned}
\end{equation}
where $C$ depends only on $y$ and $\delta$. Since $0 \leq \beta < 2$, the integral $\int_0^\infty \exp\Bigl\{ b_2(\xi^{-\beta\!} + \xi^{\beta}) - {(x^2+\xi^2-y^2)^2 \over (4xy\xi)^2}\Bigr\} d\xi$ converges uniformly in $x \in [0,({\delta \over 2})^{1/2}]$. Combining this with the inequality \eqref{eq:iw_transl_Espace_lem1_pf}, we conclude that $M_0 > 0$ can be chosen so large that
\begin{equation} \label{eq:iw_transl_Espace_lem1_sc}
\int_{E_{\mathsmaller{M}}} \! x^{-k} |f(\xi)|\, q(x,y,\xi) \, \mathrm{m}(\xi) \, d\xi < \delta \qquad \text{ for all } 0 < x < \bigl(\tfrac{\delta}{2}\bigr)^{1/2} \text{ and } M \geq M_0.
\end{equation}
Using the identity
\begin{equation} \label{eq:bW_prodform_kernderiv}
{\partial q_\alpha(x,y,\xi) \over \partial x} = {y^2 + \xi^2 - x^2 \over 2x^3 (y \xi)^2} \, q_{\alpha + {1 \over 2}}(x,y,\xi) - \bigl( x^{-3} + (1-2\alpha) x^{-1} \bigr) \, q_\alpha(x,y,\xi)
\end{equation}
we inductively see that the function $f(\xi)\, {\partial^p \over \partial x^p} q_\alpha(x,y,\xi)$ can be written as a finite sum of the form\linebreak $\sum_j C_j x^{-k_j} g_j(\xi)\, q_{\alpha_j}(x,y,\xi)$, where $g_j \in \mathcal{E}$ and $k_j \geq -1 + \max\{2\alpha_j,0\}$ for all $j$. Therefore, the conclusion of the lemma follows from \eqref{eq:iw_transl_Espace_lem1_sc}.
\end{proof}

\begin{lemma} \label{lem:iw_transl_Espace_limits}
Let $f \in \mathcal{E} \cap \mathrm{C}^2(0,\infty)$ and $y > 0$. Then: 
\begin{enumerate}[itemsep=0pt,topsep=4pt]
\item[\textbf{(i)}] $\lim\limits_{x \to 0} (\mathcal{T}^y f)(x) = f(y)$; 
\item[\textbf{(ii)}] $\lim\limits_{x \to 0} {\partial \over \partial x}(\mathcal{T}^y f)(x) = 0$.
\end{enumerate}
\end{lemma}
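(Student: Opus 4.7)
Both statements reduce, via a single truncation scheme, to facts already available: Proposition~\ref{prop:iw_gentransl_props}(d) for bounded continuous functions in~(i), and the inversion formula of Lemma~\ref{lem:iw_transf_compactsupp} for compactly supported $\mathrm{C}^2$ functions in~(ii). In both cases, Lemma~\ref{lem:iw_transl_Espace_lem1} controls the remainder supported in $E_M$.

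\textbf{For (i).} Using $\mathcal{T}^y \mathds{1} \equiv 1$, write
\[
(\mathcal{T}^y f)(x) - f(y) = \int_0^\infty \! \bigl[f(\xi) - f(y)\bigr]\, q(x,y,\xi)\, \mathrm{m}(\xi) d\xi
\]
and split the integral into the three regions $\{|\xi - y| < \delta_1\}$, $\{|\xi - y| \geq \delta_1\} \cap [1/M, M]$, and $E_M$. Continuity of $f$ at $y$ takes care of the first (with $|f(\xi) - f(y)| < \epsilon$); Lemma~\ref{lem:iw_transl_Espace_lem1} with $p = 0$, applied separately to $f$ and to the constant function $\mathds{1} \in \mathcal{E}$, takes care of the third; and the middle region is controlled by the concentration of the probability kernel $q(x,y,\cdot) \mathrm{m}(\cdot)$ at $\xi = y$, itself obtained from Proposition~\ref{prop:iw_gentransl_props}(d) applied to a continuous bump function equal to $1$ near $y$ and vanishing on $\{|\xi - y| \geq \delta_1\}$. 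Since the constant $\max_{[1/M, M]} |f|$ appears only in the middle estimate and multiplies a quantity that vanishes as $x \to 0$ for \emph{fixed} $M$, its possible growth with $M$ is immaterial once the order of quantifiers (first $\epsilon$, then $\delta_1$ and $M$, finally $x \to 0$) is respected.

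\textbf{For (ii).} Fix $\phi_M \in \mathrm{C}_\mathrm{c}^\infty(0, \infty)$ with $\phi_M \equiv 1$ on a neighborhood of $y$ contained in $[2/M, M/2]$ and $\mathrm{supp}\,\phi_M \subset [1/M, M]$, and set $f_M := f \phi_M \in \mathrm{C}_\mathrm{c}^2(0, \infty)$. Differentiation under the integral (legitimate for each $x > 0$ by \eqref{eq:bW_prodform_kernbound}--\eqref{eq:bW_prodform_kernderiv} combined with $f \in \mathcal{E}$) yields $\partial_x(\mathcal{T}^y f)(x) = \int_0^\infty f(\xi) \partial_x q(x,y,\xi) \mathrm{m}(\xi) d\xi$. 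The remainder $f - f_M = f(1 - \phi_M)$ is supported in $E_{M/2}$ and bounded by $|f|$, so Lemma~\ref{lem:iw_transl_Espace_lem1} with $p = 1$ forces $|\partial_x(\mathcal{T}^y(f - f_M))(x)| < \epsilon$ uniformly for $x$ in a right neighborhood of $0$. For the smooth piece, Lemma~\ref{lem:iw_transf_compactsupp} and Fubini (using the product formula \eqref{eq:bW_prodform}) give the absolutely convergent representation
\[
(\mathcal{T}^y f_M)(x) = \int_{({1 \over 2} - \alpha)^{2\!}}^\infty \widehat{f_M}(\lambda)\, \bm{W}_{\!\alpha,\Delta_{\lambda\!}}(x)\, \bm{W}_{\!\alpha,\Delta_{\lambda\!}}(y)\, \rho(\lambda) d\lambda,
\]
which, differentiated in $x$ and combined with the vanishing $\partial_x \bm{W}_{\!\alpha,\nu}(0) = 0$ from \eqref{eq:bW_limderivatives}, produces $\partial_x(\mathcal{T}^y f_M)(x) \to 0$ by dominated convergence.

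\textbf{Main obstacle.} The technical heart of (ii) lies in producing a $\lambda$-integrable dominant for $\widehat{f_M}(\lambda)\, \partial_x \bm{W}_{\!\alpha,\Delta_\lambda}(x)\, \bm{W}_{\!\alpha,\Delta_\lambda}(y)\, \rho(\lambda)$ uniform for $x$ in a right neighborhood of $0$. For bounded $\tau = \sqrt{\lambda - ({1 \over 2} - \alpha)^2}$ this is straightforward: integrating the ODE $\mathcal{L} \bm{W}_{\!\alpha,i\tau} = \lambda \bm{W}_{\!\alpha,i\tau}$ as in the proof of Lemma~\ref{lem:bW_ineq_1minusW} gives $|\partial_x \bm{W}_{\!\alpha,i\tau}(x)| \leq C \lambda x$ for $x$ small. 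For large $\tau$, the oscillatory asymptotic \eqref{eq:bW_itau_asymp} applied both to $\bm{W}_{\!\alpha,i\tau}(y)$ and (formally differentiated) to $\partial_x \bm{W}_{\!\alpha,i\tau}(x)$ supplies the factor $e^{-\pi\tau}$ needed to beat the $e^{\pi\tau}$ growth of $\rho(\lambda)$; combined with the decay $\widehat{f_M}(\lambda) = O(\tau^{\alpha - 5/2} e^{-\pi\tau/2})$ extracted from the proof of Lemma~\ref{lem:iw_transf_compactsupp}, this produces an $x$-uniform dominant in the large-$\tau$ regime. A threshold splitting $\tau \lessgtr R(x)$ between the two bounds (with $R(x) \to \infty$ slowly as $x \to 0$) then closes the dominated convergence argument.
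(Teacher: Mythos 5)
Your overall architecture is the paper's: truncate $f$ to a compactly supported piece and control the remainder on $E_M$ with Lemma \ref{lem:iw_transl_Espace_lem1} ($p=0$ for (i), $p=1$ for (ii)), then treat the compactly supported piece separately. For part (ii) your argument is essentially identical to the paper's (differentiate the spectral representation \eqref{eq:iw_transl_indexrep} under the integral sign, apply \eqref{eq:bW_limderivatives} and dominated convergence). For part (i) you take a genuinely different and more elementary route for the core step: instead of passing through Lemma \ref{lem:iw_transf_compactsupp} and the index representation, you run an approximate-identity argument, using $\mathcal{T}^y\mathds{1}=\mathds{1}$ together with Proposition \ref{prop:iw_gentransl_props}(d) applied to a continuous bump to show that the kernel mass on $\{|\xi-y|\ge\delta_1\}$ vanishes as $x\to 0$. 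This is correct and avoids the transform machinery entirely, at the cost of leaning on Proposition \ref{prop:iw_gentransl_props}(d), which the paper imports from the companion manuscript; the paper's spectral route is self-contained in that respect. Your handling of the order of quantifiers in (i) is sound.

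One caveat on your ``main obstacle'' paragraph for (ii). The small-$\tau$ bound is fine --- integrating the ODE once as in \eqref{eq:bW_integraleq} in fact gives $\bigl|{d\over dx}\bm{W}_{\!\alpha,\Delta_\lambda}(x)\bigr|\le 4\lambda x$ for \emph{all} $x>0$, not just small $x$. But the large-$\tau$ dominant you propose, obtained by ``formally differentiating'' \eqref{eq:bW_itau_asymp}, is not justified: asymptotic expansions cannot in general be differentiated termwise, and \eqref{eq:bW_itau_asymp} is only stated to hold uniformly for $x\in[\eps,\infty)$, i.e.\ away from the region $x\to 0$ where you need it; your threshold $\tau\lessgtr R(x)$ does not repair this, since for $\tau>R(x)$ and $x$ near $0$ you are still outside the stated range of validity. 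To be fair, the paper's own proof of (ii) is equally silent about what dominates the differentiated integrand (for $f\in\mathrm{C}_\mathrm{c}^2$ the decay $\widehat{f_M}=O(\tau^{\alpha-5/2}e^{-\pi\tau/2})$ combined with the factor $\lambda$ from the derivative bound is borderline), so this does not put you behind the paper; but if you want to close the argument rather than reproduce the paper's gap, the uniform ODE bound above together with extra decay of $\widehat{f_M}$ (available from further integration by parts when $f$ is smoother, as it is in the paper's application to moment functions via Lemma \ref{lem:iw_transl_Cinfty}) is the cleaner route than the differentiated oscillatory asymptotics.
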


\begin{proof}
\textbf{\emph{(i)}} We will first show that it is enough to prove the result for $f \in \mathrm{C}_\mathrm{c}^2(0,\infty)$. Suppose that part \emph{(i)} of the lemma holds for $f \in \mathrm{C}_\mathrm{c}^2(0,\infty)$. Let $g \in \mathcal{E} \cap \mathrm{C}^2(0,\infty)$ and $\eps,M > 0$; then, choose $\delta > 0$ and $g_\mathrm{c} \in \mathrm{C}_\mathrm{c}^2(0,\infty)$ such that $g(\xi) = g_\mathrm{c}(\xi)$ for all $\xi \in [{1 \over M}, M]$ and
\[
|(\mathcal{T}^y g)(x) - (\mathcal{T}^y g_\mathrm{c})(x)| < \eps \qquad \text{for all } x \in (0,\delta]
\]
(to see that this is possible, apply the case $p=0$ of Lemma \ref{lem:iw_transl_Espace_lem1}). If $y \in [{1 \over M},M]$, we obtain
\[
\limsup_{x \to 0} |(\mathcal{T}^y g)(x) - g(y)| \leq \eps + \lim_{x \to 0} |(\mathcal{T}^y g_\mathrm{c})(x) - g_\mathrm{c}(y)| = \eps.
\]
As $M$ and $\eps$ are arbitrary, we conclude that $\lim_{x \to 0} (\mathcal{T}^y g)(x) = g(y)$ for all $y > 0$ and $g \in \mathcal{E} \cap \mathrm{C}^2(0,\infty)$.

Let us now prove that $\lim_{x \to 0} (\mathcal{T}^y f)(x) = f(y)$ holds for $f \in \mathrm{C}_\mathrm{c}^2(0,\infty)$. Using the integral representation for $q(x,y,\xi)$ given in Theorem \ref{thm:bW_prodform}, we write
\begin{equation} \label{eq:iw_transl_indexrep}
\begin{aligned}
(\mathcal{T}^y f)(x) & = \int_0^\infty f(\xi) \int_{({1 \over 2} - \alpha)^{2\!}}^\infty \!  \bm{W}_{\!\alpha, \Delta_{\lambda\!}}(x)  \bm{W}_{\!\alpha, \Delta_{\lambda\!}}(y) \bm{W}_{\!\alpha, \Delta_{\lambda\!}}(\xi)\, \rho(\lambda) d\lambda\, \mathrm{m}(\xi) d\xi \\
& = \int_{({1 \over 2} - \alpha)^{2\!}}^\infty \!  \widehat{f}(\lambda) \, \bm{W}_{\!\alpha, \Delta_{\lambda\!}}(x)  \bm{W}_{\!\alpha, \Delta_{\lambda\!}}(y) \, \rho(\lambda) d\lambda
\end{aligned}
\end{equation}
where the second equality is obtained by changing the order of integration, which is valid because $f$ has compact support. Therefore, \eqref{eq:bW_limderivatives}, \eqref{eq:bW_ineq_param}, Lemma \ref{lem:iw_transf_compactsupp} and the dominated convergence theorem yield that
\[
\lim_{x \to 0} (\mathcal{T}^y f)(x) = \int_{({1 \over 2} - \alpha)^{2\!}}^\infty \!  \widehat{f}(\lambda) \Bigl(\mskip 0.6\thinmuskip\lim_{x \to 0} \bm{W}_{\!\alpha, \Delta_{\lambda\!}}(x)\Bigr) \bm{W}_{\!\alpha, \Delta_{\lambda\!}}(y) \, \rho(\lambda) d\lambda = f(y),
\]
concluding the proof. \\[-7pt]

\textbf{\emph{(ii)}} Identical reasoning as in part (i) shows that it is enough to prove the result for $f \in \mathrm{C}_\mathrm{c}^2(0,\infty)$. Taking $f \in \mathrm{C}_\mathrm{c}^2(0,\infty)$, differentiation of \eqref{eq:iw_transl_indexrep} under the integral sign gives
\[
{\partial \over \partial x} (\mathcal{T}^y f)(x) = \int_{({1 \over 2} - \alpha)^{2\!}}^\infty \!  \widehat{f}(\lambda) \, {d \over dx} \bm{W}_{\!\alpha, \Delta_{\lambda\!}}(x) \, \bm{W}_{\!\alpha, \Delta_{\lambda\!}}(y) \, \rho(\lambda) d\lambda
\]
If we now apply \eqref{eq:bW_limderivatives}, by dominated convergence we conclude that $\lim_{x \to 0} {\partial \over \partial x} (\mathcal{T}^y f)(x) = 0$.
\end{proof}

\section{$*$-infinitely divisible distributions} \label{chap:iw_infdiv}

The set of \emph{$*$-infinitely divisible distributions} is defined as
\[
\mathcal{P}_\mathrm{id} = \bigl\{ \mu \in \mathcal{P}[0,\infty) \bigm| \text{for all } n \in \mathbb{N} \text{ there exists } \nu_n \in \mathcal{P}[0,\infty) \text{ such that } \mu = \nu_n^{*n} \bigr\}
\]
where $\nu_n^{*n}$ denotes the $n$-fold index Whittaker convolution of $\nu_n$ with itself.

\begin{lemma} \label{lem:iwinfdivis_zeros_idempotent}
Let $\mu \in \mathcal{P}_\mathrm{id}$. Then $0 < \widehat{\mu}(\lambda) \leq 1$ for all $\lambda \geq 0$. Moreover, $\mu$ has no nontrivial idempotent divisors, i.e., if $\mu = \vartheta * \nu$ (with $\vartheta, \nu \in \mathcal{P}[0,\infty)$) where $\vartheta$ is idempotent with respect to the index Whittaker convolution (that is, it satisfies $\vartheta = \vartheta * \vartheta$), then $\vartheta = \delta_0$.
\end{lemma}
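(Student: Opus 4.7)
Two preliminary facts underlie everything. First, since $\Delta_\lambda$ is either real in $[0, \tfrac{1}{2}-\alpha]$ or purely imaginary for $\lambda \geq 0$, Theorem \ref{thm:bW_laplacerep} (via the evenness of $\eta_x$ in $s$) shows that $\bm{W}_{\!\alpha, \Delta_\lambda}(x) \in \mathbb{R}$, so the index Whittaker transform of a real finite measure is real-valued. Second, by Proposition \ref{lem:iwconv_hatprod}, $\widehat{\mu} = (\widehat{\nu_n})^n$. Specializing to \emph{even} $n$ gives $\widehat{\mu}(\lambda) \geq 0$, while the upper bound $\widehat{\mu}(\lambda) \leq 1$ is immediate from \eqref{eq:bW_ineq_param} and $\mu \in \mathcal{P}[0,\infty)$.

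The crux is strict positivity of $\widehat{\mu}$. I would argue by contradiction: set $\lambda_0 := \inf \{\lambda \geq 0 : \widehat{\mu}(\lambda) = 0\}$ and use continuity of $\widehat{\mu}$ (Proposition \ref{prop:iwmeas_props}(i)) together with $\widehat{\mu}(0) = 1$ to deduce $\lambda_0 > 0$, $\widehat{\mu}(\lambda_0) = 0$, and $\widehat{\mu} > 0$ on $[0, \lambda_0)$. For \emph{odd} $n$, the real-valuedness of $\widehat{\nu_n}$ and the identity $\widehat{\nu_n}(\lambda)^n = \widehat{\mu}(\lambda) \geq 0$ uniquely identify $\widehat{\nu_n}(\lambda) = \widehat{\mu}(\lambda)^{1/n}$ (the nonnegative $n$-th root). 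Letting $n \to \infty$ through odd integers, this converges pointwise to the function $f$ equal to $1$ where $\widehat{\mu} > 0$ and $0$ where $\widehat{\mu} = 0$. Since $f \equiv 1$ on $[0, \lambda_0)$, it is continuous at a neighborhood of zero, so Proposition \ref{prop:iwmeas_props}(iv) yields some $\nu \in \mathcal{M}_{b}[0,\infty)$ with $\widehat{\nu} \equiv f$. But Proposition \ref{prop:iwmeas_props}(i) makes $\widehat{\nu}$ uniformly continuous, whereas $f$ jumps at $\lambda_0$ (left limit $1$, value $0$). This contradiction establishes $\widehat{\mu} > 0$.

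For the idempotent divisor statement, let $\vartheta \in \mathcal{P}[0,\infty)$ satisfy $\vartheta = \vartheta * \vartheta$. Proposition \ref{lem:iwconv_hatprod} gives $\widehat{\vartheta}(\lambda)^2 = \widehat{\vartheta}(\lambda)$, so $\widehat{\vartheta}$ takes values in $\{0, 1\}$; continuity of $\widehat{\vartheta}$, connectedness of $[0, \infty)$, and $\widehat{\vartheta}(0) = 1$ then force $\widehat{\vartheta} \equiv 1 \equiv \widehat{\delta_0}$, so injectivity (Proposition \ref{prop:iwmeas_props}(ii)) delivers $\vartheta = \delta_0$. Note that this shows $\delta_0$ is actually the unique idempotent in $\mathcal{P}[0,\infty)$, so the "divisor" hypothesis on $\vartheta$ is never used. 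I expect the main obstacle to be the second paragraph: one must be careful to restrict to odd $n$ so that the sign of $\widehat{\nu_n}$ is unambiguously fixed, and then to check the precise hypotheses of Proposition \ref{prop:iwmeas_props}(iv) in order to extract the continuity-versus-jump contradiction.
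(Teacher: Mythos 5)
Your proposal is correct and follows essentially the same route as the paper: the paper's proof of strict positivity simply defers to Sato's Lemma 7.5, and your second paragraph is precisely that argument transplanted to the present setting (odd real roots of the real-valued transform in place of $|\widehat{\mu}|^2$, and Proposition \ref{prop:iwmeas_props}(i),(iv) in place of the classical continuity theorem, with the jump-at-the-first-zero contradiction replacing the open-closed argument). The only substantive deviation is the idempotent step, where you replace the paper's use of $\widehat{\mu} = \widehat{\vartheta}\,\widehat{\nu} \neq 0$ by continuity plus connectedness of $[0,\infty)$, which in fact yields the slightly stronger conclusion that $\delta_0$ is the unique idempotent in $\mathcal{P}[0,\infty)$.
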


\begin{proof}
The inequality $\widehat{\mu}(\lambda) \leq 1$ is obvious from \eqref{eq:bW_ineq_param}. The proof of the positivity of $\widehat{\mu}$ relies on the properties of the index Whittaker transform of measures (namely Proposition \ref{prop:iwmeas_props}(iv)) and is identical to the proof of \cite[Lemma 7.5]{sato1999}.

Assume that $\mu = \vartheta * \nu$ with $\vartheta$ idempotent. Then $\bigl(\widehat{\vartheta}(\lambda)\bigr)^2 = \widehat{\vartheta}(\lambda)$ for all $\lambda$, and consequently $\widehat{\vartheta}(\lambda)$ only takes the values $0$ and $1$. However, $\widehat{\mu}(\lambda) = \widehat{\vartheta}(\lambda)\,\widehat{\nu}(\lambda) \neq 0$; hence $\widehat{\vartheta}(\lambda) = 1$ for all $\lambda$, i.e., $\vartheta = \delta_0$.
\end{proof}

The first part of the lemma shows that the index Whittaker transform of $\mu \in \mathcal{P}_\mathrm{id}$ is of the form
\[
\widehat{\mu}(\lambda) = e^{-\psi_\mu(\lambda)}
\]
where $\psi_\mu(\lambda)$ ($\lambda \geq 0$) is a positive continuous function such that $\psi_\mu(0) = 0$, which we shall call the \emph{$*$-exponent} of $\mu$. The next result shows that the $*$-exponent of an infinitely divisible distribution grows at most linearly:

\begin{proposition}
Let $\mu \in \mathcal{P}_\mathrm{id}$. Then
\[
\psi_\mu(\lambda) \leq C_\mu (1+\lambda) \qquad \text{for all } \lambda \geq 0
\]
for some constant $C_\mu > 0$ which is independent of $\lambda$.
\end{proposition}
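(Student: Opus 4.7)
The plan is to exploit the infinite divisibility relation $\widehat{\nu_n}(\lambda) = e^{-\psi_\mu(\lambda)/n}$ together with a pointwise comparison between $1 - \bm{W}_{\!\alpha,\Delta_\lambda}$ and $1 - \bm{W}_{\!\alpha,\Delta_{\lambda_1}}$ for a fixed auxiliary parameter $\lambda_1 > 0$. After choosing $\lambda_1 \in \bigl(0, (\tfrac{1}{2}-\alpha)^2\bigr)$ (so that $\Delta_{\lambda_1}$ is real and lies in $(0,\tfrac{1}{2}-\alpha)$), the key step is to establish the \emph{comparison estimate}
\[
1 - \bm{W}_{\!\alpha,\Delta_\lambda}(x) \leq C_1 (1+\lambda)\bigl(1 - \bm{W}_{\!\alpha,\Delta_{\lambda_1}}(x)\bigr) \qquad (x \geq 0, \; \lambda \geq 0)
\]
for some constant $C_1 = C_1(\alpha, \lambda_1) > 0$.

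To prove the comparison estimate, I would use the Taylor expansion derived from \eqref{eq:bW_limderivatives}, namely $\bm{W}_{\!\alpha,\Delta_{\lambda_1}}(x) = 1 - 2\lambda_1 x^2 + o(x^2)$ as $x \to 0$, to find $x_0 > 0$ such that $1 - \bm{W}_{\!\alpha,\Delta_{\lambda_1}}(x) \geq \lambda_1 x^2$ on $[0,x_0]$. For $x \geq x_0$, the Laplace-type representation of Theorem \ref{thm:bW_laplacerep} combined with \eqref{eq:bW_equal1_param} yields the strict pointwise inequality $\bm{W}_{\!\alpha,\Delta_{\lambda_1}}(x) < 1$ (since the integrand $\cosh((\tfrac{1}{2}-\alpha)s) - \cosh(\Delta_{\lambda_1} s)$ is strictly positive for $s > 0$, given $\Delta_{\lambda_1} < \tfrac{1}{2}-\alpha$); coupled with $\bm{W}_{\!\alpha,\Delta_{\lambda_1}}(x) \to 0$ as $x \to \infty$ (from \eqref{eq:bW_liminfty}) and continuity, this delivers $\eta_1 := \inf_{x \geq x_0}\bigl(1 - \bm{W}_{\!\alpha,\Delta_{\lambda_1}}(x)\bigr) > 0$. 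Combining these lower bounds with the upper bound $1 - \bm{W}_{\!\alpha,\Delta_\lambda}(x) \leq 2\lambda x^2$ from Lemma \ref{lem:bW_ineq_1minusW} on $[0,x_0]$, and with the crude bound $1 - \bm{W}_{\!\alpha,\Delta_\lambda}(x) \leq 2$ (from \eqref{eq:bW_ineq_param}) on $[x_0,\infty)$, yields the comparison estimate with $C_1 := \max\bigl(2/\lambda_1,\, 2/\eta_1\bigr)$.

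Once the comparison estimate is in hand, integrating against the probability measure $\nu_n$ satisfying $\nu_n^{*n} = \mu$ produces
\[
1 - e^{-\psi_\mu(\lambda)/n} \leq C_1 (1+\lambda)\bigl(1 - e^{-\psi_\mu(\lambda_1)/n}\bigr).
\]
Multiplying by $n$ and letting $n \to \infty$, using the elementary limit $n(1 - e^{-a/n}) \to a$, yields $\psi_\mu(\lambda) \leq C_1 \psi_\mu(\lambda_1)(1+\lambda)$, which is the required bound with $C_\mu := C_1 \psi_\mu(\lambda_1)$ (finite by continuity of $\psi_\mu$ on $[0,\infty)$).

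The step I expect to be the main obstacle is the uniform lower bound $\eta_1 > 0$, i.e.\ ruling out sequences $x_n \to \infty$ along which $\bm{W}_{\!\alpha,\Delta_{\lambda_1}}(x_n) \to 1$. The Laplace-type representation of Theorem \ref{thm:bW_laplacerep}, restricted to the real-$\nu$ regime $\Delta_{\lambda_1} \in (0,\tfrac{1}{2}-\alpha)$ chosen above, makes the strict inequality $\bm{W}_{\!\alpha,\Delta_{\lambda_1}} < 1$ transparent, while \eqref{eq:bW_liminfty} closes the argument on the far tail.
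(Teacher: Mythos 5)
Your proof is correct, and its core engine is the same as the paper's: both arguments linearize through the $n$-th convolution roots $\nu_n$ of $\mu$ (via $n\bigl(1-\widehat{\nu_n}(\lambda)\bigr) \to \psi_\mu(\lambda)$), control $1-\bm{W}_{\!\alpha,\Delta_{\lambda}}(x)$ from above by $2\lambda x^2$ using Lemma \ref{lem:bW_ineq_1minusW}, and control $1-\bm{W}_{\!\alpha,\Delta_{\lambda_1}}(x)$ from below by a multiple of $x^2$ near the origin via the second-derivative limit in \eqref{eq:bW_limderivatives}. Where you genuinely diverge is the region away from the origin: the paper splits the integral at a fixed $\eps$ and introduces a \emph{second} auxiliary frequency $\lambda_2$, chosen through the oscillatory asymptotics \eqref{eq:bW_itau_asymp} so that $|\bm{W}_{\!\alpha,\Delta_{\lambda_2}}(x)| \leq \tfrac{1}{2}$ for $x \geq \eps$, which produces the additive term $4\psi_\mu(\lambda_2)$ in the bound; you instead stay with the single frequency $\lambda_1 \in \bigl(0,(\tfrac{1}{2}-\alpha)^2\bigr)$ and extract a uniform gap $\eta_1 = \inf_{x \geq x_0}\bigl(1-\bm{W}_{\!\alpha,\Delta_{\lambda_1}}(x)\bigr) > 0$ from the strict monotonicity in $\nu$ supplied by the Laplace representation \eqref{eq:bW_laplacerep} (together with the positivity of $\eta_x$, i.e.\ of $D_{2\alpha}$ on $(0,\infty)$ for $\alpha < \tfrac{1}{2}$) and the decay \eqref{eq:bW_liminfty}. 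Both devices are sound; yours buys a cleaner statement --- a single pointwise comparison inequality and only one auxiliary parameter, with final constant $C_1\psi_\mu(\lambda_1)$ --- at the price of invoking Theorem \ref{thm:bW_laplacerep}, whereas the paper's two-frequency splitting avoids any strict-positivity argument for $1-\bm{W}_{\!\alpha,\Delta_{\lambda_1}}$ on the tail.
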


\begin{proof}
For $n \in \mathbb{N}$, denote by $\nu_n$ the probability measure such that $\mu = \nu_n^{*n}$ (and thus \scalebox{0.99}{$\widehat{\nu_n}(\lambda) \equiv \exp(-{1 \over n} \psi_\mu(\lambda))$}). Due to the inequality $1-e^{-\tau} \leq \tau$ (valid for $\tau \geq 0$) and the fact that $\lim_n n(1-e^{-k/n}) = k$ for each $k \in \mathbb{R}$, we have
\begin{equation} \label{eq:iwmeas_infdiv_ineqlim}
n\bigl(1-\widehat{\nu_n}(\lambda)\bigr) \leq \psi_\mu(\lambda) \text{ for all } n \in \mathbb{N}, \qquad\quad \lim_{n \to \infty} n\bigl(1-\widehat{\nu_n}(\lambda)\bigr) = \psi_\mu(\lambda).
\end{equation}

Pick $\lambda_1 > 0$. By \eqref{eq:bW_limderivatives}, there exists $\eps > 0$ such that ${d^2 \over dx^2} \bm{W}_{\!\alpha,\Delta_{\lambda_1\!\!}}(x) \leq -2\lambda_1$ for all $0 < x \leq \eps$, and then we have
\begin{equation} \label{eq:iwmeas_infdiv_bound_pf0}
{1 \over \lambda_1} \bigl(1 - \bm{W}_{\!\alpha,\Delta_{\lambda_1\!\!}}(x)\bigr) = {1 \over \lambda_1} \int_0^x  (y-x)\, {d^2 \over dx^2} \bm{W}_{\!\alpha,\Delta_{\lambda_1\!\!}}(y) \, dy \geq x^2 \qquad \text{for all } 0 \leq x \leq \eps.
\end{equation}
Using also Lemma \ref{lem:bW_ineq_1minusW}, we get
\begin{equation} \label{eq:iwmeas_infdiv_bound_pf1}
\begin{aligned}
n \int_{[0,\eps)\!} \bigl(1-\bm{W}_{\!\alpha,\Delta_\lambda\!}(x)\bigr) \nu_n(dx) & \leq 2\lambda n \int_{[0,\eps)\!} x^2\, \nu_n(dx) \\
& \leq {2\lambda n \over \lambda_1} \int_{[0,\eps)} \bigl(1 - \bm{W}_{\!\alpha,\Delta_{\lambda_1\!\!}}(x)\bigr) \nu_n(dx) \\
& \leq {2\lambda n \over \lambda_1} \bigl(1-\widehat{\nu_n}(\lambda_1)\bigr) \leq {2\lambda \over \lambda_1} \psi_\mu(\lambda_1).
\end{aligned}
\end{equation}

Next, from the asymptotic expansion given in \eqref{eq:bW_itau_asymp} we easily see that there exists $\lambda_2 > 0$ such that
\[
\bigl| \bm{W}_{\!\alpha,\Delta_{\lambda_{2\!\!}}}(x) \bigr| \leq {1 \over 2} \qquad \text{for all } x \geq \eps
\]
and using \eqref{eq:bW_ineq_param} we obtain
\begin{equation} \label{eq:iwmeas_infdiv_bound_pf2}
\begin{aligned}
n \int_{[\eps,\infty)\!} \bigl(1-\bm{W}_{\!\alpha,\Delta_\lambda\!}(x)\bigr) \nu_n(dx) & \leq 2n \int_{[\eps,\infty)\!} \nu_n(dx) \\
& \leq 4n \int_{[\eps,\infty)\!}\bigl( 1-\bm{W}_{\!\alpha,\Delta_{\lambda_{2\!\!}}}(x) \bigr) \nu_n(dx) \\
& \leq 4n\bigl(1-\widehat{\nu_n}(\lambda_2)\bigr) \leq 4\psi_\mu(\lambda_2).
\end{aligned}
\end{equation}
Combining \eqref{eq:iwmeas_infdiv_bound_pf1} and \eqref{eq:iwmeas_infdiv_bound_pf2} one sees that
\begin{equation} \label{eq:iwmeas_infdiv_bound_pf3}
n(1-\widehat{\nu_n}(\lambda)) = n \int_{[0,\infty)\!} \bigl(1-\bm{W}_{\!\alpha,\Delta_{\lambda_{2\!\!}}}(x)\bigr) \nu_n(dx) \leq {2\lambda \over \lambda_1} \psi_\mu(\lambda_1) + 4\psi_\mu(\lambda_2) \leq C_\mu (1+\lambda)
\end{equation}
where $C_\mu = \max\bigl\{{2 \over \lambda_1} \psi_\mu(\lambda_1), 4\psi_\mu(\lambda_2) \bigr\}$. The inequality \eqref{eq:iwmeas_infdiv_bound_pf3} holds for all $n$; taking the limit $n \to \infty$ yields
\[
\psi_\mu(\lambda) \leq C_\mu (1+\lambda)
\]
which completes the proof.
\end{proof}

The compound Poisson and the Gaussian distributions are the two probability measures which are in the center of the celebrated Lévy-Khintchine formula on the characterization of the infinitely divisible distributions with respect to the classical convolution. We will now define their counterparts with respect to the index Whittaker convolution.

\begin{definition}
Let $\mu \in \mathcal{P}[0,\infty)$ and $a>0$. The measure $\mb{e}(a\mu)$ defined by
\[
\mb{e}(a\mu) = e^{-a} \sum_{k=0}^\infty {a^k \over k!} \mu^{*k}
\]
(the infinite sum converging in the weak topology) is said to be the \emph{$*$-compound Poisson measure} associated with $a\mu$.
\end{definition}

This definition is completely analogous to that of the classical compound Poisson measure. From the definition it immediately follows that $\mb{e}(a\mu) \in \mathcal{P}[0,\infty)$. Moreover, its index Whittaker transform can be easily deduced using Proposition \ref{lem:iwconv_hatprod}:
\[
\widehat{\mb{e}(a\mu)}(\lambda) = e^{-a} \sum_{k=0}^\infty {a^k \over k!} \widehat{\mu^{*k}}(\lambda) = e^{-a} \sum_{k=0}^\infty {a^k \over k!} \bigl(\widehat{\mu}(\lambda)\bigr)^k = \exp\bigl(a(\widehat{\mu}(\lambda) - 1)\bigr).
\]
Since $\mb{e}((a+b)\mu) = \mb{e}(a\mu) * \mb{e}(b\mu)$, every $*$-compound Poisson measure belongs to $\mathcal{P}_\mathrm{id}$.

\begin{definition} \label{def:iw_gaussian}
A measure $\mu \in \mathcal{P}[0,\infty)$ is called a \emph{$*$-Gaussian measure} if $\mu \in \mathcal{P}_\mathrm{id}$ and
\[
\mu = \mb{e}(a\nu) * \vartheta \quad \bigl(a > 0 ,\, \nu \in \mathcal{P}[0,\infty),\, \vartheta \in \mathcal{P}_\mathrm{id}\bigr) \qquad \implies \qquad \nu = \delta_0.
\]
\end{definition}

\begin{remark}
In the classical case, it is known that if $\mu \in \mathcal{P}(\mathbb{R})$ is a Gaussian measure and $\mu = \mu_1 * \mu_2$ with $\mu_1, \mu_2 \in \mathcal{P}(\mathbb{R})$, then $\mu_1$ and $\mu_2$ are also Gaussian measures. (This is the Lévy-Cramer theorem, cf.\ \cite[\S III.1]{linnikostrovskii1977}.). Conversely, if an infinitely divisible $\mu \in \mathcal{P}(\mathbb{R})$ is such that $\mu = \mb{e}(a\nu) * \vartheta$ implies $\nu = \delta_0$, then the Lévy measure in the classical Lévy-Khintchine formula must be the zero measure, which means that $\mu$ is a Gaussian measure. (See the discussion after Equation (16.8) in \cite{klenke2014}.) This characterization of the classical Gaussian measures makes it natural to define Gaussian measures with respect to the index Whittaker convolution as in Definition \ref{def:iw_gaussian}.
\end{remark}

The following proposition provides a Lévy-Khintchine type representation for $*$-infinitely divisible distributions.

\begin{proposition} \label{prop:iw_levykhin}
The $*$-exponent of a measure $\mu \in \mathcal{P}_\mathrm{id}$ can be represented in the form
\begin{equation} \label{eq:iw_levykhin}
\psi_\mu(\lambda) = \psi_\gamma(\lambda) + \int_{(0,\infty)\!} \bigl( 1 - \bm{W}_{\!\alpha,\Delta_{\lambda\!}}(x) \bigr) \nu(dx)
\end{equation}
where $\nu$ is a $\sigma$-finite measure on $(0,\infty)$ which is finite on $(\eps,\infty)$ for all $\eps > 0$ and such that
\[
\int_{(0,\infty)\!} \bigl( 1 - \bm{W}_{\!\alpha,\Delta_{\lambda\!}}(x) \bigr) \nu(dx) < \infty
\]
and $\gamma$ is a $*$-Gaussian measure with $*$-exponent $\psi_\gamma(\lambda)$. Conversely, each function of the form \eqref{eq:iw_levykhin} is a $*$-exponent of some $\mu \in \mathcal{P}_\mathrm{id}$.
\end{proposition}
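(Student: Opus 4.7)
Given a $*$-Gaussian $\gamma$ and a Lévy-type measure $\nu$ with $\int_{(0,\infty)}(1 - \bm{W}_{\!\alpha,\Delta_{\lambda\!}})\,d\nu < \infty$, the plan is to realize $\mu$ as a weak limit of manifestly infinitely divisible measures. Set $\nu_n := \nu|_{(1/n,n)}$, a finite positive measure on $(0,\infty)$, and let $\mu_n := \gamma * \mb{e}(\nu_n)$, which lies in $\mathcal{P}_\mathrm{id}$ as a $*$-convolution of $*$-infinitely divisible pieces. By Proposition \ref{lem:iwconv_hatprod} and the transform of $\mb{e}$ computed above,
\[
\widehat{\mu_n}(\lambda) = \exp\Bigl(-\psi_\gamma(\lambda) - \int_{(1/n,n)\!}(1 - \bm{W}_{\!\alpha,\Delta_{\lambda\!}})\,d\nu\Bigr) \xrightarrow[\,n \to \infty\,]{} e^{-\psi_\mu(\lambda)}
\]
pointwise in $\lambda \geq 0$ by monotone convergence, the integrands being nonnegative by \eqref{eq:bW_ineq_param}. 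Lemma \ref{lem:bW_ineq_1minusW} together with dominated convergence shows that the pointwise limit is continuous at $\lambda = 0$, so Proposition \ref{prop:iwmeas_props}(iv) delivers $\mu \in \mathcal{P}[0,\infty)$ with $\mu_n \warrow \mu$ and $\widehat{\mu} = e^{-\psi_\mu}$. To verify $\mu \in \mathcal{P}_\mathrm{id}$, fix $k \in \mathbb{N}$, observe that the $k$-th convolution roots $\rho_{n,k}$ of $\mu_n$ have transforms $(\widehat{\mu_n})^{1/k} \to e^{-\psi_\mu/k}$ pointwise (which is continuous at $0$), and invoke Proposition \ref{prop:iwmeas_props}(iv) once more to extract a weak limit $\rho_k$ with $\rho_k^{*k} = \mu$.

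\textbf{Direct direction: extraction of a limiting measure.} For $\mu \in \mathcal{P}_\mathrm{id}$ with $\mu = \nu_n^{*n}$, introduce the auxiliary measures $\sigma_n := n\,\nu_n|_{(0,\infty)}$ so that
\[
\int_{(0,\infty)\!}(1 - \bm{W}_{\!\alpha,\Delta_{\lambda\!}})\,d\sigma_n = n(1 - \widehat{\nu_n}(\lambda)) \xrightarrow[\,n \to \infty\,]{} \psi_\mu(\lambda), \qquad \lambda \geq 0.
\]
Reusing the estimates from the proof of the preceding proposition, \eqref{eq:bW_liminfty} supplies $\lambda_2$ with $\bm{W}_{\!\alpha,\Delta_{\lambda_{2\!\!}}}(x) \leq \tfrac12$ on $[\eps,\infty)$, yielding $\sigma_n([\eps,\infty)) \leq 2\psi_\mu(\lambda_2)$, while the lower bound \eqref{eq:iwmeas_infdiv_bound_pf0} gives $\int_{(0,\eps]} x^2\,d\sigma_n \leq \psi_\mu(\lambda_1)/\lambda_1$. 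Consequently the measures $\tilde\sigma_n(dx) := \min(1,x^2)\,\sigma_n(dx)$, viewed as finite Borel measures on the compactification $[0,\infty]$, are uniformly bounded; by Helly-Prokhorov a subsequence converges weakly to some finite $\tilde\sigma$.

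\textbf{Direct direction: identification and main obstacle.} The function $\Phi_\lambda(x) := (1-\bm{W}_{\!\alpha,\Delta_{\lambda\!}}(x))/\!\min(1,x^2)$ extends continuously to $[0,\infty]$, with $\Phi_\lambda(0) = 2\lambda$ by \eqref{eq:bW_limderivatives} and $\Phi_\lambda(\infty) = 1$ by \eqref{eq:bW_liminfty} (since $|\mathrm{Re}\,\Delta_\lambda| < \tfrac12 - \alpha$ for $\lambda > 0$). Weak convergence of the subsequence then gives $\psi_\mu(\lambda) = \int_{[0,\infty]} \Phi_\lambda\,d\tilde\sigma$. Letting $\lambda \to 0^+$, Lemma \ref{lem:bW_ineq_1minusW} provides a dominating quadratic bound on $\Phi_\lambda$ near $0$ and \eqref{eq:bW_ineq_param} the bound $\Phi_\lambda \leq 2$ on $[1,\infty)$, so dominated convergence yields $\int_{[0,\infty)}\Phi_\lambda\,d\tilde\sigma \to 0$ while $\int_{\{\infty\}}\Phi_\lambda\,d\tilde\sigma = \tilde\sigma(\{\infty\})$; combined with $\psi_\mu(0^+)=0$ this forces $\tilde\sigma(\{\infty\}) = 0$. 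Setting $c := \tilde\sigma(\{0\})$ and $\nu(dx) := \min(1,x^2)^{-1}\tilde\sigma(dx)$ on $(0,\infty)$ produces
\[
\psi_\mu(\lambda) = 2c\lambda + \int_{(0,\infty)\!}(1 - \bm{W}_{\!\alpha,\Delta_{\lambda\!}}(x))\,\nu(dx),
\]
with the required $\sigma$-finiteness and integrability of $\nu$. Because $\mathcal{L}\bm{W}_{\!\alpha,\Delta_{\lambda\!}} = \lambda\bm{W}_{\!\alpha,\Delta_{\lambda\!}}$, the linear piece $2c\lambda$ is the $*$-exponent of the transition kernel at time $2c$ of the index Whittaker diffusion, hence an element of $\mathcal{P}_\mathrm{id}$; it is $*$-Gaussian in the sense of Definition \ref{def:iw_gaussian} because any nontrivial factor $\mb{e}(a\kappa)$ would make $a(1-\widehat{\kappa}(\lambda))$ linear in $\lambda$, which together with the asymptotics \eqref{eq:bW_liminfty} and Lemma \ref{lem:bW_ineq_1minusW} forces $\kappa = \delta_0$. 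The core technical difficulty lies precisely in the compactification and boundary analysis of the last two paragraphs: controlling $\sigma_n$ simultaneously near $0$ (where mass may concentrate) and near $\infty$ (where it may escape), ruling out an atom of $\tilde\sigma$ at $\{\infty\}$, and cleanly peeling the $\{0\}$-atom off as a $*$-Gaussian component is where the real work is hidden.
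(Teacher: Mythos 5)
Your argument takes a genuinely different route from the paper's. The paper does not carry out any compactification argument: it verifies that the index Whittaker convolution satisfies hypotheses a)--e) of Volkovich's theory of algebras with stochastic convolution (uniform continuity and multiplicativity of the transform, the continuity theorem of Proposition \ref{prop:iwmeas_props}(iii)--(iv), and relative compactness of sets of divisors, obtained from the isolation of the zeros of $\lambda\mapsto\bm{W}_{\!\alpha,\Delta_{\lambda\!}}(x)$ via \cite{volkovich1992}) and then cites Theorem 4.1 of \cite{volkovich1988}. Your self-contained Gnedenko--Kolmogorov-style proof is arguably more informative, since it exhibits the Lévy measure as a limit of the rescaled roots $n\,\nu_n$. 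The converse direction and the extraction/identification of $\tilde\sigma$ on $[0,\infty]$ are essentially sound; one small point is that the domination needed for continuity of the limit at $\lambda=0$ in the converse direction requires not only the upper bound of Lemma \ref{lem:bW_ineq_1minusW} but also the lower bound \eqref{eq:iwmeas_infdiv_bound_pf0} to guarantee $\int_{(0,1)}x^2\,\nu(dx)<\infty$ from the stated integrability hypothesis; this is easily recoverable.

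There is, however, a genuine gap in the last step of the direct direction, namely the claim that the linear part $2c\lambda$ is the exponent of a $*$-Gaussian measure. You assert that a factorization $\gamma=\mb{e}(a\kappa)*\vartheta$ ``would make $a(1-\widehat{\kappa}(\lambda))$ linear in $\lambda$,'' but nothing forces this: the factorization only gives $a(1-\widehat{\kappa}(\lambda))=2c\lambda-\psi_\vartheta(\lambda)$ with $\psi_\vartheta\geq 0$ unknown, and the resulting inequality $a(1-\widehat{\kappa}(\lambda))\leq 2c\lambda$ is satisfied by many nontrivial $\kappa$ (for instance $\kappa=\delta_{x_0}$ with $ax_0^2\leq c$, by Lemma \ref{lem:bW_ineq_1minusW}), so the asymptotics \eqref{eq:bW_liminfty} alone cannot force $\kappa=\delta_0$. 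What is missing is an argument that the linear coefficient is intrinsic to the representation. One way to close the gap: show that every term of the form $\int(1-\bm{W}_{\!\alpha,\Delta_{\lambda\!}})\,d\rho$ with $\int\min(1,x^2)\,d\rho<\infty$ is $o(\lambda)$ as $\lambda\to\infty$ (split the integral at $\eps$, use $1-\bm{W}_{\!\alpha,\Delta_{\lambda\!}}\leq 2$ on $[\eps,\infty)$ and $1-\bm{W}_{\!\alpha,\Delta_{\lambda\!}}\leq 2\lambda x^2$ on $(0,\eps)$, then let $\eps\downarrow 0$); applying your representation to $\vartheta$ as well, the linear coefficients on both sides must then agree, leaving $\int(1-\bm{W}_{\!\alpha,\Delta_{\lambda\!}})\,d(a\kappa+\nu')\equiv 0$, which forces $\kappa=\delta_0$ by strict positivity of $1-\bm{W}_{\!\alpha,\Delta_{\lambda\!}}(x)$ for $x,\lambda>0$ (see \eqref{eq:bW_integraleq}). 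Alternatively, one can invoke the paper's later and logically independent argument that the index Whittaker diffusion semigroup $\{p_{t,0}\}$, whose transform is exactly $e^{-t\lambda}$, satisfies condition (ii) of Lemma \ref{lem:iw_gaussian_equiv} by McKean's theorem and is therefore $*$-Gaussian.
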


\begin{proof}
The proposition follows from the results of Volkovich \cite{volkovich1988} on generalized convolution algebras. Let us verify the corresponding assumptions:
\begin{itemize}[itemsep=1pt]
\item We have seen above that: the index Whittaker transform of any probability measure is uniformly continuous; $\widehat{\delta_0}(\lambda) \equiv 1$; the product property of Proposition \ref{lem:iwconv_hatprod} holds. Hence the index Whittaker convolution satisfies assumptions a), b) and c) of \cite{volkovich1988}. 

\item Proposition \ref{prop:iwmeas_props}(iii)-(iv) shows that the index Whittaker convolution satisfies assumption d) of \cite{volkovich1988}.

\item By the analyticity properties of the Whittaker function, for fixed $x$ the zeros of the function $\lambda \mapsto \bm{W}_{\!\alpha,\Delta_{\lambda\!}}(x)$ are isolated. Moreover, we know that (by virtue of the uniform continuity) there exists no $\mu \in \mathcal{P}[0,\infty)$ with $\widehat{\mu}(\lambda) = 0$ for all $\lambda > 0$. According to Corollary 1 of Volkovich \cite{volkovich1992}, it follows that if $\mathcal{P}_\mathrm{rc}$ is a relatively compact subset of $\mathcal{P}[0,\infty)$ then the set $\mathrm{D}(\mathcal{P}_\mathrm{rc})$ of all divisors (with respect to the $*$-convolution) of the elements of $\mathcal{P}_\mathrm{rc}$ is also a relatively compact subset of $\mathcal{P}[0,\infty)$; this means that the index Whittaker convolution satisfies assumption e) of \cite{volkovich1988}.
\end{itemize}
The conclusion follows from Theorem 4.1 of \cite{volkovich1988}. (By Lemma \ref{lem:iwinfdivis_zeros_idempotent}, the result covers all $*$-infinitely divisible distributions.)
\end{proof}

\section{Lévy processes} \label{chap:iw_levyprocess}

\subsection{Feller-type properties of $*$-convolution semigroups} \label{sec:iw_levyprocess_char}

We start with the definition of a convolution semigroup with respect to the index Whittaker convolution.

\begin{definition}
A family $\{\mu_t\}_{t\geq 0} \subset \mathcal{P}[0,\infty)$ is called a \emph{$*$-convolution semigroup} if it satisfies the conditions
\begin{itemize}[itemsep=-1pt,topsep=3pt]
\item $\mu_s * \mu_t = \mu_{s+t}$ for all $s, t \geq 0$;
\item $\mu_0 = \delta_0$;
\item $\mu_t \warrow \delta_0$ as $t \downarrow 0$.
\end{itemize}
The \emph{(infinitesimal) generator} $A$ of a $*$-convolution semigroup $\{\mu_t\}_{t \geq 0}$ is the operator defined by
\[
Af = \lim_{t \to 0} {1 \over t} (\mathcal{T}^{\mu_t} f - f), \qquad f \in \mathcal{D}_{\!A}
\]
where the domain $\mathcal{D}_{\!A}$ is the set
\[
\mathcal{D}_{\!A} := \Bigl\{ f \in C[0,\infty): \lim_{t \to 0} {1 \over t} (\mathcal{T}^{\mu_t} f - f) \text{ exists in the topology of compact convergence} \Bigr\}.
\]
\end{definition}

\begin{remark}
Similarly to the classical case, the $*$-infinitely divisible distributions are in one-to-one correspondence with the $*$-convolution semigroups (so that the latter also admit a Lévy-Khintchine type representation). Indeed, if $\{\mu_t\}$ is a $*$-convolution semigroup, then $\mu_t$ is (for each $t \geq 0$) a $*$-infinitely divisible distribution; and if $\mu$ is a $*$-infinitely divisible distribution with $*$-exponent $\psi_\mu(\lambda)$, then the semigroup $\{\mu_t\}$ defined by $\widehat{\mu_t}(\lambda) = \exp(-t \, \psi_\mu(\lambda))$ is the unique $*$-convolution semigroup such that $\mu_1 = \mu$. These statements are proved as in the classical case, cf.\ \cite[Section 7]{sato1999}, but replacing the ordinary characteristic function by the index Whittaker transform.
\end{remark}

Unsurprisingly, each $*$-convolution semigroup is associated with a conservative Feller semigroup of operators which commute with the index Whittaker generalized translation:

\begin{proposition}
Let $\{\mu_t\}_{t\geq 0}$ be a $*$-convolution semigroup. Then the family $\{T_t\}_{t \geq 0}$ of convolution operators defined by
\begin{equation} \label{eq:iw_fellersemigr}
\begin{gathered}
T_t: \mathrm{C}_\mathrm{b}[0,\infty) \longrightarrow \mathrm{C}_\mathrm{b}[0,\infty) \\
T_t f = \mathcal{T}^{\mu_{t\!}} f, \quad \text{ where } \quad (\mathcal{T}^{\mu_t{\!}} f)(x) := \int_{[0,\infty)\!} (\mathcal{T}^y f)(x)\, \mu_t(dy)
\end{gathered}
\end{equation}
is a conservative Feller semigroup, i.e., it satisfies the properties
\begin{enumerate}[itemsep=0pt,topsep=4pt]
\item[\textbf{(i)}] $T_t T_s = T_{t+s}$ for all $t, s \geq 0$;
\item[\textbf{(ii)}] $T_t \bigl(\mathrm{C}_0[0,\infty)\bigr) \subset \mathrm{C}_0[0,\infty)$ for all $t \geq 0$;
\item[\textbf{(iii)}] $T_t \mathds{1} = \mathds{1}$ for all $t \geq 0$, and if $f \in \mathrm{C}_\mathrm{b}[0,\infty)$ satisfies $0 \leq f \leq 1$, then $0 \leq T_t f \leq 1$;
\item[\textbf{(iv)}] $\lim_{t \downarrow 0} \|T_t f - f\|_\infty = 0$ for each $f \in \mathrm{C}_0[0,\infty)$.
\end{enumerate}
Moreover, $T_t \mathcal{T}^\nu f = \mathcal{T}^\nu T_t f$ for all $t \geq 0$ and $\nu \in \mathcal{M}_{b}[0,\infty)$ (in particular, $T_t \mathcal{T}^x f = \mathcal{T}^x T_t f$ for $x \geq 0$).
\end{proposition}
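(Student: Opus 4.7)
The plan is to reduce (i), (ii), (iii) and the commutativity identity to routine Fubini and dominated-convergence arguments based on the results of Section~\ref{chap:iw_convalgebra}, and to attack the strong continuity (iv) by reducing it to a sup-norm continuity property of the generalized translation at $y=0$, which in turn is proved on a dense subclass via the spectral representation.

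For (i), I would expand $(T_s T_t f)(x) = \iint (\mathcal{T}^z \mathcal{T}^y f)(x)\,\mu_t(dy)\,\mu_s(dz)$ for $f \in \mathrm{C}_\mathrm{b}[0,\infty)$, apply the commutation $\mathcal{T}^y \mathcal{T}^z = \mathcal{T}^z \mathcal{T}^y$ from \eqref{eq:iw_gentransl_symmxy}, Fubini, and the definition \eqref{eq:iwconv_def} of $*$ to rewrite the right-hand side as $\int f(\xi)(\delta_x * \mu_s * \mu_t)(d\xi)$, and invoke $\mu_s * \mu_t = \mu_{s+t}$ to identify this with $T_{s+t}f(x)$; the identity $T_t \mathcal{T}^\nu f = \mathcal{T}^\nu T_t f$ is proved identically with $\nu$ in place of $\mu_s$. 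Property (iii) is immediate from \eqref{eq:iw_gentransl_1preserv} and Proposition~\ref{prop:iw_gentransl_props}(a), since $\mu_t$ is a probability measure. For (ii), given $f \in \mathrm{C}_0[0,\infty)$, the facts that $\mathcal{T}^y f \in \mathrm{C}_\mathrm{b}[0,\infty)$ and $\|\mathcal{T}^y f\|_\infty \leq \|f\|_\infty$ (from Proposition~\ref{prop:iw_gentransl_props}(b),(c) together with \eqref{eq:iw_gentransl_contpreserv}) yield continuity of $T_t f$ in $x$ by dominated convergence against $\mu_t$; the vanishing at infinity then follows from Proposition~\ref{prop:iw_gentransl_props}(e) by a second dominated-convergence argument.

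The main obstacle is (iv). The plan is to use the splitting
\[
\|T_t f - f\|_\infty \leq \sup_{y \in [0,\delta]} \|\mathcal{T}^y f - f\|_\infty + 2\|f\|_\infty\,\mu_t((\delta,\infty))
\]
valid for every $\delta > 0$. Since $\mu_t((\delta,\infty)) \to 0$ as $t \downarrow 0$ by $\mu_t \warrow \delta_0$ and the Portmanteau theorem, the argument reduces to showing $\lim_{y \downarrow 0}\|\mathcal{T}^y f - f\|_\infty = 0$ for $f$ in a dense subclass of $\mathrm{C}_0[0,\infty)$; by contractivity from (iii) this extends to all of $\mathrm{C}_0[0,\infty)$ via a standard $3\varepsilon$-argument. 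For $f \in \mathrm{C}_\mathrm{c}^2(0,\infty)$ the inversion formula of Lemma~\ref{lem:iw_transf_compactsupp} combined with the product formula \eqref{eq:bW_prodform} yields
\[
(\mathcal{T}^y f)(x) - f(x) = \int_\Lambda \widehat{f}(\lambda)\,\bm{W}_{\!\alpha,\Delta_\lambda}(x)\bigl(\bm{W}_{\!\alpha,\Delta_\lambda}(y) - 1\bigr)\rho(\lambda)\,d\lambda
\]
with $\Lambda = ((\tfrac{1}{2}-\alpha)^2,\infty)$, and the quadratic bound $1 - \bm{W}_{\!\alpha,\Delta_\lambda}(y) \leq 2\lambda y^2$ from Lemma~\ref{lem:bW_ineq_1minusW} together with $|\bm{W}_{\!\alpha,\Delta_\lambda}(x)| \leq 1$ from \eqref{eq:bW_ineq_param} should drive $\|\mathcal{T}^y f - f\|_\infty \to 0$ by dominated convergence in the spectral variable. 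The delicate point—which is the hard part of the proof—is precisely the integrability of the dominating function: $\rho(\lambda)\,d\lambda$ grows like $\tau^{1-2\alpha}e^{\pi\tau}\,d\tau$ (with $\lambda = \tau^2 + (\tfrac{1}{2}-\alpha)^2$) whereas the decay of $\widehat{f}$ supplied by the proof of Lemma~\ref{lem:iw_transf_compactsupp} is only $\tau^{\alpha-5/2}e^{-\pi\tau/2}$. The resolution is to use the fine asymptotics \eqref{eq:bW_itau_asymp} of $\bm{W}_{\!\alpha,i\tau}(x)$, which provide an extra $\tau^{\alpha-1/2}e^{-\pi\tau/2}$ factor uniformly on $x \in [\varepsilon,\infty)$, and to handle the $x$-range near $0$ separately by exploiting that $f \in \mathrm{C}_\mathrm{c}^2(0,\infty)$ is supported away from~$0$.
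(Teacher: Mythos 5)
Parts (i)--(iii) and the commutation identity are fine and follow essentially the paper's own route (Fubini, associativity/commutativity of $*$, the Markov property of $\mathcal{T}^y$, and \eqref{eq:iw_gentransl_contpreserv} with dominated convergence). The problem is in (iv): your reduction to a dense subclass breaks down because $\mathrm{C}_\mathrm{c}^2(0,\infty)$ is \emph{not} dense in $\mathrm{C}_0[0,\infty)$ for the sup norm. Every $g \in \mathrm{C}_\mathrm{c}^2(0,\infty)$ vanishes in a neighbourhood of the origin, so $\|f-g\|_\infty \geq |f(0)|$ for any $f \in \mathrm{C}_0[0,\infty)$ with $f(0)\neq 0$; the uniform closure of your class is only $\{f \in \mathrm{C}_0[0,\infty): f(0)=0\}$, and the concluding $3\eps$-argument fails as stated. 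The gap is repairable: adjoin to the class a single function $\bm{W}_{\!\alpha,\Delta_{\lambda_0}}$ with $\lambda_0>0$, which lies in $\mathrm{C}_0[0,\infty)$, equals $1$ at the origin, and satisfies $\|\mathcal{T}^y\bm{W}_{\!\alpha,\Delta_{\lambda_0}}-\bm{W}_{\!\alpha,\Delta_{\lambda_0}}\|_\infty = |1-\bm{W}_{\!\alpha,\Delta_{\lambda_0}}(y)|\,\|\bm{W}_{\!\alpha,\Delta_{\lambda_0}}\|_\infty \to 0$ trivially by \eqref{eq:bW_prodform}; then $f-f(0)\bm{W}_{\!\alpha,\Delta_{\lambda_0}}$ vanishes at $0$ and at infinity and is approximable from $\mathrm{C}_\mathrm{c}^2(0,\infty)$. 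Two further points need writing out rather than gesturing: the estimate $\sup_{0<x<\eps}|(\mathcal{T}^y f)(x)|\to 0$ on the region where $f$ vanishes (it does hold, via \eqref{eq:bW_prodform_kernbound} with the roles of $x$ and $y$ exchanged, since the Gaussian factor $\exp(-(y^2-\xi^2)^2/(8(xy\xi)^2))$ kills the singular prefactors when $\xi$ is bounded away from $0$ and $x,y$ are small), and the absolute convergence of the spectral representation of $(\mathcal{T}^y f)(x)$ that you subtract from Lemma \ref{lem:iw_transf_compactsupp}. Your integrability bookkeeping on the spectral side is correct: with $\lambda=\tau^2+(\tfrac12-\alpha)^2$ the product $\tau^{\alpha-5/2}e^{-\pi\tau/2}\cdot\tau^{\alpha-1/2}e^{-\pi\tau/2}\cdot\tau^{1-2\alpha}e^{\pi\tau}=\tau^{-2}$ is integrable, so the crude bound $|\bm{W}_{\!\alpha,\Delta_\lambda}(x)|\leq 1$ must indeed be replaced by \eqref{eq:bW_itau_asymp} on $[\eps,\infty)$, exactly as you say.

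For comparison, the paper disposes of (iv) in three lines: once (i)--(iii) are known, it suffices to check the \emph{pointwise} convergence $(T_tf)(x)\to f(x)$, which is immediate from $\mu_t \warrow \delta_0$ because $y\mapsto(\mathcal{T}^yf)(x)$ is bounded and continuous with value $f(x)$ at $y=0$; the upgrade from pointwise to uniform convergence is then the standard fact that a sub-Markovian contraction semigroup on $\mathrm{C}_0$ which is weakly continuous at $t=0$ is strongly continuous (Lemma 1.4 of \cite{bottcher2013}). Your approach, once patched, proves the stronger and independently interesting statement $\lim_{y\downarrow 0}\|\mathcal{T}^yf-f\|_\infty=0$, but it is considerably more work than the proposition requires.
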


\begin{proof}
The proof of (i) is standard (see e.g.\ \cite[Section 4.2]{jewett1975}). Part (ii) follows at once from \eqref{eq:iw_gentransl_contpreserv} and the dominated convergence theorem, and (iii) follows trivially from the fact that the index Whittaker translation operator $\mathcal{T}^y$ is Markovian.

To prove (iv), let $f \in \mathrm{C}_0[0,\infty)$ and $x \geq 0$. From the definition of weak convergence and the fact that $(\mathcal{T}^0 f)(x) = f(x)$ we deduce that
\[
\bigl|(T_t f)(x) - f(x)\bigr| = \biggl| \int_{[0,\infty)\!} \bigl((\mathcal{T}^y f)(x) - f(x)\bigr) \mu_t(dy) \biggr| \xrightarrow[\; t \downarrow 0\;]{} \biggl| \int_{[0,\infty)} \bigl((\mathcal{T}^y f)(x) - f(x)\bigr) \delta_0(dy) \biggr| = 0.
\]
Using a well-known result from the theory of Feller semigroups (e.g.\ \cite[Lemma 1.4]{bottcher2013}), we conclude that (iv) holds.

Lastly, the property \eqref{eq:iw_gentransl_symmxy} extends, by associativity and commutativity of the index convolution, to 
\[
\mathcal{T}^\mu \mathcal{T}^\nu f = \mathcal{T}^\nu \mathcal{T}^\mu f \qquad \text{for all } f \in \mathrm{C}_\mathrm{b}[0,\infty) \text{ and } \mu, \nu \in \mathcal{P}_{\!\mathsmaller\leq}[0,\infty)
\]
and therefore the concluding statement holds.
\end{proof}

\begin{proposition} \label{prop:iwfeller_Lpext}
Let $\{T_t\}$ be a Feller semigroup determined by the $*$-convolution semigroup $\{\mu_t\}_{t \geq 0}$. Then, for each $1 \leq p \leq \infty$, $\bigl\{T_t\restrict{\mathrm{C}_\mathrm{c}[0,\infty)}\bigr\}$ has an extension $\{T_t^{(p)}\}$ which is a strongly continuous Markovian contraction semigroup on $L_p(\mathrm{m})$.
\end{proposition}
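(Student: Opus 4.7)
The plan is to leverage the $L_p$-contractivity of the translation operator $\mathcal{T}^y$ (Proposition \ref{prop:iw_gentransl_props}(b)) together with Minkowski's integral inequality to obtain $L_p$-contractivity of $T_t = \mathcal{T}^{\mu_t}$, then extend by density and verify strong continuity on a dense subspace. The Markovian property will follow readily from the positivity of the convolution kernel.

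First, for $f \in \mathrm{C}_\mathrm{c}[0,\infty)$, which is dense in $L_p(\mathrm{m})$ when $1 \leq p < \infty$, Minkowski's integral inequality applied to $(T_t f)(x) = \int_{[0,\infty)} (\mathcal{T}^y f)(x) \mu_t(dy)$ yields
\[
\|T_t f\|_p \leq \int_{[0,\infty)} \|\mathcal{T}^y f\|_p\, \mu_t(dy) \leq \|f\|_p,
\]
using Proposition \ref{prop:iw_gentransl_props}(b) in the second inequality. Hence $T_t\restrict{\mathrm{C}_\mathrm{c}[0,\infty)}$ extends uniquely to a contraction $T_t^{(p)}$ on $L_p(\mathrm{m})$, and the semigroup identity $T_t^{(p)} T_s^{(p)} = T_{t+s}^{(p)}$ follows from the corresponding identity on the dense subspace together with the semigroup relation $\mu_s * \mu_t = \mu_{s+t}$ and associativity of the index Whittaker convolution. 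The Markov property is immediate from \eqref{eq:iw_gentransl_positivity}, \eqref{eq:iw_gentransl_1preserv}, and continuity of the extension.

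For strong continuity it suffices, by the contraction bound and density, to exhibit a dense subspace $D \subset L_p(\mathrm{m})$ on which $\|T_t^{(p)} f - f\|_p \to 0$ as $t \downarrow 0$. I would take $D = \mathrm{C}_\mathrm{c}^2(0,\infty)$ and, using Minkowski once more, estimate
\[
\|T_t f - f\|_p \leq \int_{[0,\infty)} \|\mathcal{T}^y f - f\|_p\, \mu_t(dy).
\]
The goal is then to show that $h(y) := \|\mathcal{T}^y f - f\|_p$ is continuous and bounded on $[0,\infty)$ with $h(0) = 0$: continuity on $(0,\infty)$ is Proposition \ref{prop:iw_gentransl_props}(c), while the limit $h(y) \to 0$ as $y \downarrow 0$ can be obtained from the spectral representation \eqref{eq:iw_transl_indexrep}, the bound \eqref{eq:bW_ineq_param}, the rapid decay of $\widehat{f}$ established in the proof of Lemma \ref{lem:iw_transf_compactsupp}, and dominated convergence. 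Once $h$ is known to be bounded on $[0,\infty)$ and continuous at $0$, the weak convergence $\mu_t \warrow \delta_0$ gives $\int h\, d\mu_t \to h(0) = 0$ via the Portmanteau theorem.

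The main obstacle will be handling the extreme values $p = 1$ and $p = \infty$. Proposition \ref{prop:iw_gentransl_props}(c) explicitly excludes $p = 1$, so continuity of $h$ needs a separate argument there; I would exploit the symmetry relation \eqref{eq:iw_gentransl_symmweig} to transfer $L_\infty$-continuity to $L_1$-continuity by duality, or equivalently interpolate between the $L_2$- and $L_\infty$-statements. The case $p = \infty$ is more delicate, since even in the classical setting strong continuity on $L_\infty$ can fail; here I would either restrict strong continuity implicitly to $\mathrm{C}_0[0,\infty)$ (which is provided directly by the Feller property already established) or reinterpret it in a weak-$*$ sense. Apart from these endpoint difficulties, the remaining steps are a routine bootstrapping of the spectral and weak-convergence tools developed earlier in the paper.
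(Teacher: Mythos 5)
Your route is genuinely different from the paper's. The paper's entire proof consists of verifying, via \eqref{eq:iw_gentransl_symmweig} and Fubini, that $T_t$ is symmetric with respect to $\mathrm{m}(x)dx$ on $\mathrm{C}_\mathrm{c}[0,\infty)$, and then invoking Lemma 1.45 of \cite{bottcher2013}, which packages exactly the statement ``symmetric Feller semigroup $\Rightarrow$ strongly continuous Markovian contraction semigroup on every $L_p$''. Your Minkowski argument for the contraction bound, the reduction $\|T_tf-f\|_p\le\int\|\mathcal{T}^yf-f\|_p\,\mu_t(dy)$, and the use of $\mu_t\warrow\delta_0$ together with boundedness of $h(y)=\|\mathcal{T}^yf-f\|_p$ and continuity of $h$ at $0$ are all correct in outline, and would give a self-contained proof not relying on the cited lemma.

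There is, however, a concrete gap in the one step that carries all the difficulty: showing $\|\mathcal{T}^yf-f\|_p\to0$ as $y\downarrow0$ when $1\le p<2$. The spectral representation \eqref{eq:iw_transl_indexrep} plus the decay of $\widehat f$ gives $\sup_x|(\mathcal{T}^yf)(x)-f(x)|\to0$ (the $L_\infty$ statement) and, via Plancherel, the $L_2$ statement; the elementary inequality $\|g\|_p^p\le\|g\|_2^2\|g\|_\infty^{p-2}$ then covers $2\le p<\infty$. But since $\mathrm{m}(x)\sim x^{1-4\alpha}$ has infinite total mass, uniform convergence does not imply $L_p$ convergence for any finite $p$, and neither of your proposed repairs reaches $p=1$: interpolation between the $L_2$ and $L_\infty$ statements only yields $p\ge2$, while duality through \eqref{eq:iw_gentransl_symmweig} transfers the contraction bound but not strong continuity (the dual of the $L_1$ statement would require $\|\mathcal{T}^yg-g\|_\infty\to0$ uniformly over the unit ball of $L_\infty$, which fails). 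A correct completion needs an extra ingredient, e.g.\ conservation of mass: by Fubini and $\int_0^\infty q(x,y,\xi)\mathrm{m}(x)dx=1$ one has $\int_0^\infty(\mathcal{T}^yf)\,\mathrm{m}=\int_0^\infty f\,\mathrm{m}$ for $f\ge0$, so pointwise convergence (Proposition \ref{prop:iw_gentransl_props}(d) with \eqref{eq:iw_gentransl_symm}) plus Scheff\'e's lemma gives the $L_1$ case, and then $\|g\|_p^p\le\|g\|_\infty^{p-1}\|g\|_1$ handles $1<p<2$; alternatively one can extract a uniform Gaussian-type tail majorant from \eqref{eq:bW_prodform_kernbound}. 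Your caution about $p=\infty$ is well placed ($\mathrm{C}_\mathrm{c}$ is not dense there and strong continuity on all of $L_\infty$ is not to be expected), but that is an issue with the statement as much as with either proof.
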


\begin{proof}
For $f,g\in \mathrm{C}_\mathrm{c}[0,\infty)$, \eqref{eq:iw_gentransl_symmweig} and Fubini's theorem yield
\begin{equation} \label{eq:iwfeller_Ccsymm}
\begin{aligned}
\int_0^\infty (T_t f)(x) g(x)\, \mathrm{m}(x) dx & = \int_{[0,\infty)\!} \int_0^\infty (\mathcal{T}^y f)(x) g(x)\, \mathrm{m}(x) dx\, \mu_t(dy) \\
& = \int_{[0,\infty)\!} \int_0^\infty f(x) (\mathcal{T}^y g)(x)\, \mathrm{m}(x) dx\, \mu_t(dy) = \int_0^\infty f(x) (T_t g)(x)\, \mathrm{m}(x) dx.
\end{aligned}
\end{equation}
The conclusion now follows from Lemma 1.45 of \cite{bottcher2013}.
\end{proof}

It is worth pointing out that, taking advantage of the correspondence between functions $f \in L_2(\mathrm{m})$ and their index Whittaker transforms (Theorem \ref{thm:iw_transf_L2iso}), the action of the $L_2$-Markov semigroup $\{T_t^{(2)}\}$ can be explicitly written as
\begin{equation} \label{eq:iwfeller_L2char}
\widehat{T_t^{(2)\!} f} = e^{-t\,\psi} \ccdot \widehat{f}, \qquad f \in L_2(\mathrm{m}).
\end{equation}
where $\psi$ is the $*$-exponent of the $*$-convolution semigroup $\{\mu_t\}$ (i.e., of the measure $\mu_1$). Indeed, for $f \in \mathrm{C}_\mathrm{c}[0,\infty)$ we have
\[
\widehat{T_t^{(2)\!} f}(\lambda) = \int_0^\infty \! \bm{W}_{\!\alpha,\Delta_{\lambda\!}}(x) \! \int_{[0,\infty)\!} (\mathcal{T}^y f)(x) \mu_t(dy)\, \mathrm{m}(x) dx = \int_{[0,\infty)\!} \widehat{\mathcal{T}^y f}(\lambda) \, \mu_t(dy) = e^{-t\,\psi(\lambda)} \widehat{f}(\lambda), \quad\; \lambda \geq 0,
\]
(the second and third equalities being obtained by changing the order of integration and using \eqref{eq:iw_gentransl_conn}, respectively) and by continuity the equality extends to all $f \in L_2(\mathrm{m})$.

The index Whittaker transform also allows us to give the following characterization of the generator of the semigroup $\{T_t^{(2)}\}$:

\begin{proposition} \label{prop:iwfeller_L2gen}
Let $\{\mu_t\}$ be a $*$-convolution semigroup with $*$-exponent $\psi$ and let $\{T_t^{(2)}\}$ be the associated Markovian semigroup on $L_2(\mathrm{m})$. Then the infinitesimal generator $(A^{(2)}, \mathcal{D}_{\!A^{(2)}})$ of the semigroup $\{T_t^{(2)}\}$ is the self-adjoint operator given by
\[
\widehat{A^{(2)} f} = -\psi \cdot \widehat{f}, \qquad f \in \mathcal{D}_{\!A^{(2)}}
\]
where
\[
\mathcal{D}_{\!A^{(2)}} = \biggl\{ f \in L_2(\mathrm{m}): \int_{({1 \over 2} - \alpha)^{2\!}}^\infty \bigl|\psi(\lambda)\bigr|^2 \bigl|\widehat{f}(\lambda)\bigr|^2 \rho(\lambda) d\lambda < \infty \biggr\}.
\]
\end{proposition}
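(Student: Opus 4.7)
The plan is to transfer the entire analysis to the spectral side via the isometric isomorphism $\bm{\mathrm{W}} : L_2(\mathrm{m}) \to L_2(\Lambda; \rho(\lambda) d\lambda)$ established in Theorem \ref{thm:iw_transf_L2iso}. The identity \eqref{eq:iwfeller_L2char} stated just above the proposition shows that the conjugated semigroup $\widetilde{T}_t := \bm{\mathrm{W}} T_t^{(2)} \bm{\mathrm{W}}^{-1}$ acts on $L_2(\Lambda; \rho d\lambda)$ as pointwise multiplication, $\widetilde{T}_t \phi = e^{-t\psi} \phi$. Before identifying its generator, I would first record that $\psi$ is real-valued and nonnegative: the Laplace-type representation of Theorem \ref{thm:bW_laplacerep} (with $\cosh$ in the integrand) combined with \eqref{eq:bW_ineq_param} shows that $\bm{W}_{\!\alpha,\Delta_\lambda}(x) \in \mathbb{R}$ and $|\bm{W}_{\!\alpha,\Delta_\lambda}(x)| \leq 1$ for each $\lambda \geq 0$, so $\widehat{\mu_t}(\lambda) = e^{-t\psi(\lambda)}$ is real and lies in $(0,1]$ (positivity comes from Lemma \ref{lem:iwinfdivis_zeros_idempotent}), forcing $\psi \geq 0$.

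The problem then reduces to the classical identification of the generator of a multiplication semigroup. Write $B$ for the operator of multiplication by $-\psi$ on $L_2(\Lambda; \rho d\lambda)$ with the maximal domain
\[
\mathcal{D}_B := \bigl\{\phi \in L_2(\Lambda;\rho d\lambda) : \psi\phi \in L_2(\Lambda;\rho d\lambda)\bigr\},
\]
which under $\bm{\mathrm{W}}^{-1}$ is in one-to-one correspondence with the set appearing in the statement. I would show that $B$ coincides with the infinitesimal generator $\widetilde{A}$ of $\{\widetilde{T}_t\}$ by a double inclusion. For $\phi \in \mathcal{D}_B$, the elementary bound $\bigl|t^{-1}(e^{-t\psi(\lambda)}-1) + \psi(\lambda)\bigr| \leq \psi(\lambda)$ (valid since $\psi \geq 0$) together with dominated convergence yields $t^{-1}(\widetilde{T}_t\phi - \phi) \to -\psi\phi$ in $L_2(\Lambda; \rho d\lambda)$. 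Conversely, if $\phi \in \mathcal{D}_{\widetilde{A}}$, choose a sequence $t_n \downarrow 0$ along which $t_n^{-1}(\widetilde{T}_{t_n}\phi - \phi)$ converges to $\widetilde{A}\phi$ both in $L_2$ and pointwise a.e.; the pointwise limit is $-\psi \phi$, so Fatou's lemma gives
\[
\int_\Lambda |\psi \phi|^2 \rho\, d\lambda \leq \liminf_{n} \int_\Lambda \bigl|t_n^{-1}(\widetilde{T}_{t_n}\phi - \phi)\bigr|^2 \rho\, d\lambda = \|\widetilde{A}\phi\|_{L_2(\Lambda;\rho d\lambda)}^2 < \infty,
\]
so $\phi \in \mathcal{D}_B$ and $\widetilde{A}\phi = -\psi\phi$.

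Pulling back through the unitary $\bm{\mathrm{W}}$, I obtain $A^{(2)} = \bm{\mathrm{W}}^{-1} B\, \bm{\mathrm{W}}$ with the stated domain, yielding the identity $\widehat{A^{(2)} f} = -\psi \cdot \widehat{f}$. Self-adjointness is then automatic: multiplication by the real-valued measurable function $-\psi$ is self-adjoint on $L_2(\Lambda;\rho d\lambda)$ with the maximal domain, and self-adjointness is preserved under unitary equivalence. The only delicate point is the domain bookkeeping on the spectral side (the reverse inclusion requiring Fatou); once the multiplication-semigroup structure of $\widetilde{T}_t$ is in hand via \eqref{eq:iwfeller_L2char}, the remainder is a textbook computation.
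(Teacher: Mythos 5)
Your proof is correct and follows essentially the same route as the paper's: both pass through the unitary equivalence of Theorem \ref{thm:iw_transf_L2iso} and the multiplier identity \eqref{eq:iwfeller_L2char} to identify $A^{(2)}$ with multiplication by $-\psi$ on its maximal domain, with the same subsequence/a.e.\ argument for one inclusion; your explicit dominated-convergence bound $|t^{-1}(e^{-t\psi}-1)+\psi|\leq\psi$ and the Fatou step merely fill in details the paper's sketch leaves implicit. The only cosmetic difference is that the paper gets self-adjointness up front from the symmetry of $T_t^{(2)}$ (via \eqref{eq:iwfeller_Ccsymm} and \cite[Theorem 4.6]{davies1980}), whereas you deduce it at the end from unitary equivalence with a real multiplication operator; both are valid.
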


\begin{proof}
The proof is very similar to that for the ordinary Fourier transform (see \cite[Theorem 12.16]{bergforst1975}), so we only give a sketch.

From \eqref{eq:iwfeller_Ccsymm} and the usual density argument we see that $\{T_t^{(2)}\}$ is a semigroup of symmetric operators in $L_2(\mathrm{m})$, and therefore (cf.\ \cite[Theorem 4.6]{davies1980}) its generator $(A^{(2)}, \mathcal{D}_{\!A^{(2)}})$ is self-adjoint.

Letting $f \in \mathcal{D}_{\!A^{(2)}}$, so that $L_2\text{-\!}\lim_{t \downarrow 0}{1 \over t} (T_t f - f) = A^{(2)}f \in L_2(\mathrm{m})$, from \eqref{eq:iwfeller_L2char} we get
\[
L_2\text{-\!}\lim_{t \downarrow 0}{1 \over t} \bigl(e^{-t\,\psi} - 1\bigr) \ccdot \widehat{f} = \widehat{A^{(2)}f}
\]
The convergence holds almost everywhere along a sequence $\{t_n\}_{n \in \mathbb{N}}$ such that $t_n \to 0$, so we conclude that $\widehat{A^{(2)}f} = -\psi \cdot \widehat{f} \in L_2\bigl(\Lambda; \rho(\lambda) d\lambda \bigr)$.

Conversely, if we let $f \in L_2(\mathrm{m})$ with $-\psi \ccdot \widehat{f} \in L_2\bigl(\Lambda; \rho(\lambda) d\lambda \bigr)$, then we have
\[
L_2\text{-\!}\lim_{t \downarrow 0} {1 \over t}\bigl(\widehat{T_t f} - \widehat{f} \bigr) = -\psi \ccdot \widehat{f} \in L_2\bigl(\Lambda; \rho(\lambda) d\lambda\bigr)
\]
and the isometry gives that $L_2\text{-\!}\lim_{t \downarrow 0} {1 \over t}\bigl(T_t f - f \bigr) \in L_2(\mathrm{m})$, meaning that $f \in \mathcal{D}_{\!A^{(2)}}$.
\end{proof}

\subsection{$*$-Lévy and $*$-Gaussian processes}

\begin{definition}
Let $\{\mu_t\}_{t\geq 0}$ be a $*$-convolution semigroup. A $[0,\infty)$-valued Markov process $X = \{X_t\}_{t \geq 0}$ is said to be a \emph{$*$-Lévy process} associated with $\{\mu_t\}_{t \geq 0}$ if its transition probabilities are given by
\[
P\bigl[X_t \in B | X_s = x\bigr] = (\mu_{t-s} * \delta_x)(B), \qquad 0 \leq s \leq t,\; x \geq 0,\; B \text{ a Borel subset of } [0,\infty).
\]
\end{definition}

In other words, a $*$-Lévy process is a Feller process associated with the Feller semigroup defined in \eqref{eq:iw_fellersemigr}. Consequently, the general connection between Feller semigroups and Feller processes (see e.g.\ \cite[Section 1.2]{bottcher2013}) assures that for each (initial) distribution $\nu \in \mathcal{P}[0,\infty)$ and $*$-convolution semigroup $\{\mu_t\}_{t\geq 0}$ there exists a $*$-Lévy process $X$ associated with $\{\mu_t\}_{t \geq 0}$ and such that $P_{X_0} = \nu$. Being a Feller process, any $*$-Lévy process is stochastically continuous (i.e., $X_s \to X_t$ in probability as $s \to t$, for each $t \geq 0$) and has a càdlàg modification (i.e., there exists a $*$-Lévy process $\{\widetilde{X}_t\}$ with a.s.\ right-continuous paths and satisfying $P\bigl[X_t = \widetilde{X}_t \bigr] = 1$ for all $t \geq 0$).

\begin{example} \label{exam:iw_diffusion}
Consider the one-dimensional diffusion process $Y = \{Y_t\}_{t \geq 0}$ generated by the differential operator \eqref{eq:iw_Lop}, i.e.\ the solution of the stochastic differential equation
\[
dY_t = {1 \over 4}\bigl(Y_t^{-1} + (3-4\alpha)Y_t\bigr) dt + 2^{-{1 \over 2}} Y_t \, dW_t
\]
where $\{W_t\}_{t \geq 0}$ is a standard Brownian motion. This diffusion process, which we call the \emph{index Whittaker diffusion}, is a $*$-Lévy process. Indeed, it follows from \cite[Theorem 3.6 and Example 3.8]{sousayakubovich2017} that the process $Y$ has transition probabilities given by
\[
p_{t,x}(dy) \equiv P\bigl[Y_t \in dy | Y_0 = x\bigr] = \int_{({1 \over 2} - \alpha)^{2\!}}^\infty \!  e^{-t\lambda} \, \bm{W}_{\!\alpha, \Delta_{\lambda\!}}(x)  \bm{W}_{\!\alpha, \Delta_{\lambda\!}}(y) \, \rho(\lambda) d\lambda \, \mathrm{m}(y) dy, \qquad t > 0, \; x \geq 0
\]
Computing the index Whittaker transform, we get
\[
\widehat{p_{t,x}}(\lambda) = \int_0^\infty \! \int_{({1 \over 2} - \alpha)^{2\!}}^\infty \!  e^{-t\lambda} \bm{W}_{\!\alpha, \Delta_{\lambda\!}}(x)  \bm{W}_{\!\alpha, \Delta_{\lambda\!}}(y) \, \rho(\lambda) d\lambda \: \bm{W}_{\!\alpha, \Delta_{\lambda\!}}(y)\, \mathrm{m}(y) dy = e^{-t\lambda} \bm{W}_{\!\alpha, \Delta_{\lambda\!}}(x),\, \quad\;\; t > 0, \, \lambda \geq 0
\]
where the last equality follows from Theorem \ref{thm:iw_transf_L2iso} (noting that the integral $\int_0^\infty \bm{W}_{\!\alpha, \Delta_{\lambda\!}}(y) \, p_{t,x}(dy)$ converges absolutely). This shows that $p_{t,x} = \mu_t * \delta_x$ where $\mu_t = p_{t,0}$. Given that $\widehat{\mu_t}(\lambda) = e^{-t\lambda}$, it is clear that $\{\mu_t\}$ is a $*$-convolution semigroup and, therefore, $Y$ is a $*$-Lévy process.

We observe that (as emphasized in the Introduction) the index Whittaker diffusion can also be defined as $Y_t = \smash{\sqrt{{1 \over 2} V_t}}$, where $V = \{V_t\}_{t \geq 0}$ is the Shiryaev process \cite{peskir2006}. The latter is defined as the solution of the stochastic differential equation $dV_t = (1 + 2(1-\alpha) V_t) dt + 2^{1 \over 2} V_t \, dW_t$, and its transition probability density is governed by the Kolmogorov-Shiryaev equation ${\partial u(t,x) \over \partial t} = -{\partial \over \partial x} \bigl[(1+2(1-\alpha)x)u(t,x)\bigr] + {\partial^2 \over \partial x^2} \bigl[x^2 u(t,x)\bigr]$.
\end{example}

Some equivalent characterizations of $*$-Lévy processes are given in the next proposition. For a càdlàg process $X = \{X_t\}_{t \geq 0}$ and a generator $A$ of a $*$-convolution semigroup $\{\mu_t\}_{t \geq 0}$, we introduce the notation $Z\text{\large\mathstrut}_{X}^{A,f} = \bigl\{Z\text{\large\mathstrut}_{X,t}^{A,f}\bigr\}_{t \geq 0}$, where
\begin{equation} \label{eq:iw_levy_Zdef}
Z\text{\large\mathstrut}_{X,t}^{A,f} := f(X_t) - f(X_0) - \int_0^t A(g)(X_s)\, ds \qquad\quad (f \in \mathcal{D}_{\!A}).
\end{equation}

\begin{proposition} \label{prop:iw_levy_equivchar}
Let $\{\mu_t\}$ be a $*$-convolution semigroup with $*$-exponent $\psi$ and let $(A,\mathcal{D}_{\!A})$ be its generator. Let $X$ be a $[0,\infty)$-valued càdlàg Markov process. The following assertions are equivalent:
\begin{enumerate} [itemsep=0pt,topsep=4pt]
\item[\textbf{(i)}] $X$ is a $*$-Lévy process associated with $\{\mu_t\}$;
\item[\textbf{(ii)}] $\{e^{t\psi(\lambda)} \bm{W}_{\!\alpha,\Delta_{\lambda\!}}(X_t)\}_{t \geq 0}$ is a martingale for each $\lambda \geq 0$;
\item[\textbf{(iii)}] $\bigl\{ \bm{W}_{\!\alpha,\Delta_{\lambda\!}}(X_t) - \bm{W}_{\!\alpha,\Delta_{\lambda\!}}(X_0) + \psi(\lambda) \int_0^t \bm{W}_{\!\alpha,\Delta_{\lambda\!}}(X_s)\, ds \bigr\}_{t \geq 0}$ is a martingale for each $\lambda \geq 0$;
\item[\textbf{(iv)}] $Z\text{\large\mathstrut}_{X}^{A,\raisebox{.5pt}{\scalebox{0.75}{\footnotesize$\bm{W}_{\!\alpha,\Delta_{\lambda\!}}(\cdot)$}}}$ is a martingale for each $\lambda \geq 0$;
\item[\textbf{(v)}] $Z\text{\large\mathstrut}_{X}^{A,f}$ is a martingale for each $f \in \mathcal{D}_{\!A} \cap \mathrm{C}_0[0,\infty)$.
\end{enumerate}
\end{proposition}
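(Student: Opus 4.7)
My plan is to run the cycle (i) $\Rightarrow$ (ii) $\Leftrightarrow$ (iii) $\Leftrightarrow$ (iv), (ii) $\Rightarrow$ (i), and (i) $\Leftrightarrow$ (v), with the linchpin being the eigenvalue identity
\[
A\bm{W}_{\!\alpha,\Delta_{\lambda\!}} = -\psi(\lambda)\,\bm{W}_{\!\alpha,\Delta_{\lambda\!}}, \qquad \bm{W}_{\!\alpha,\Delta_{\lambda\!}} \in \mathcal{D}_{\!A}.
\]
This I would establish from the product formula \eqref{eq:bW_prodform} together with the Feller semigroup \eqref{eq:iw_fellersemigr}: writing $(T_t \bm{W}_{\!\alpha,\Delta_{\lambda\!}})(x) = \int_{[0,\infty)} (\mathcal{T}^y \bm{W}_{\!\alpha,\Delta_{\lambda\!}})(x)\,\mu_t(dy) = \bm{W}_{\!\alpha,\Delta_{\lambda\!}}(x)\, \widehat{\mu_t}(\lambda) = e^{-t\psi(\lambda)}\bm{W}_{\!\alpha,\Delta_{\lambda\!}}(x)$, one immediately reads off that $t^{-1}(T_t - \mathrm{id})\bm{W}_{\!\alpha,\Delta_{\lambda\!}} \longrightarrow -\psi(\lambda)\bm{W}_{\!\alpha,\Delta_{\lambda\!}}$ as $t \downarrow 0$, with the uniform bound $|\bm{W}_{\!\alpha,\Delta_{\lambda\!}}|\leq 1$ from \eqref{eq:bW_ineq_param} validating the convergence in the compact-convergence topology. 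With this in hand, (iii) $\Leftrightarrow$ (iv) is a tautology obtained by substituting $A\bm{W}_{\!\alpha,\Delta_{\lambda\!}} = -\psi(\lambda)\bm{W}_{\!\alpha,\Delta_{\lambda\!}}$ into the definition \eqref{eq:iw_levy_Zdef} of $Z_X^{A,\bm{W}_{\!\alpha,\Delta_{\lambda\!}}\!}$.

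The remaining easy steps follow. For (i) $\Rightarrow$ (ii) I would combine the Markov property with the eigenvalue identity to get $E[\bm{W}_{\!\alpha,\Delta_{\lambda\!}}(X_t)\mid\mathcal{F}_s] = e^{-(t-s)\psi(\lambda)}\bm{W}_{\!\alpha,\Delta_{\lambda\!}}(X_s)$, and multiply by $e^{t\psi(\lambda)}$. For (ii) $\Rightarrow$ (iii), I would compute $E[\,\cdot \mid \mathcal{F}_s]$ of the candidate martingale in (iii), using Fubini to commute the conditional expectation with $\int_s^t$ and invoking (ii) at each intermediate $u$; the two resulting terms $(e^{-(t-s)\psi(\lambda)}-1)\bm{W}_{\!\alpha,\Delta_{\lambda\!}}(X_s)$ and $\psi(\lambda)\int_s^t e^{-(u-s)\psi(\lambda)}\bm{W}_{\!\alpha,\Delta_{\lambda\!}}(X_s)\,du$ cancel. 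For (ii) $\Rightarrow$ (i), evaluating at $s=0$ yields $\widehat{p_t(x,\cdot)}(\lambda) = e^{-t\psi(\lambda)}\bm{W}_{\!\alpha,\Delta_{\lambda\!}}(x) = \widehat{\mu_t * \delta_x}(\lambda)$ for the transition kernel $p_t(x,\cdot)$ of $X$, which by Proposition \ref{prop:iwmeas_props}(ii) forces $p_t(x,\cdot) = \mu_t * \delta_x$. For (i) $\Rightarrow$ (v), I would invoke the standard Dynkin identity $T_t f - f = \int_0^t T_u(Af)\,du$ (valid on $\mathcal{D}_{\!A}$ and obtained from the semigroup property together with continuity in $t$), which combined with the Markov property directly yields the martingale property of $Z_X^{A,f}$.

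The main obstacle is (v) $\Rightarrow$ (i), since the characters $\bm{W}_{\!\alpha,\Delta_{\lambda\!}}$ do not in general lie in $\mathrm{C}_0[0,\infty)$ (indeed $\bm{W}_{\!\alpha,1/2-\alpha} \equiv 1$ by \eqref{eq:bW_equal1_param}), so (v) cannot be applied directly to deduce (iv). My plan is to argue at the level of resolvents: let $\tilde{T}_t$ denote the transition semigroup of $X$ acting on $\mathrm{C}_\mathrm{b}[0,\infty)$ and set $U_\beta f := \int_0^\infty e^{-\beta t}\tilde{T}_t f\,dt$. Taking the $\mathcal{F}_0$-expectation in the martingale identity in (v) gives $\tilde{T}_t f - f = \int_0^t \tilde{T}_u(Af)\,du$ for every $f \in \mathcal{D}_{\!A}\cap \mathrm{C}_0[0,\infty)$, and Laplace transformation in $t$ produces $\beta U_\beta f - f = U_\beta(Af)$. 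Applied with $f = R_\beta g$ for arbitrary $g \in \mathrm{C}_0[0,\infty)$---where $R_\beta$ is the resolvent of the Feller semigroup $\{T_t\}$ of \eqref{eq:iw_fellersemigr}, so that $f \in \mathcal{D}_{\!A}\cap\mathrm{C}_0[0,\infty)$ and $Af = \beta f - g$---this yields $U_\beta g = R_\beta g$ on all of $\mathrm{C}_0[0,\infty)$. Uniqueness of the Laplace transform then gives $\tilde{T}_t = T_t$ on $\mathrm{C}_0[0,\infty)$, whence the transition kernels of $X$ coincide with those of the $*$-Lévy process associated with $\{\mu_t\}$, yielding (i).
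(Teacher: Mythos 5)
Your overall strategy is sound, and where the paper simply defers to \cite[Theorem 3.4]{rentzschvoit2000} (``the proof is identical to\dots''), you reconstruct the argument in a self-contained way. The ingredients you use --- the eigenfunction identity $A\bm{W}_{\!\alpha,\Delta_\lambda}=-\psi(\lambda)\bm{W}_{\!\alpha,\Delta_\lambda}$ with $\bm{W}_{\!\alpha,\Delta_\lambda}\in\mathcal{D}_{\!A}$ read off from the product formula and the bound $|\bm{W}_{\!\alpha,\Delta_\lambda}|\leq 1$, the Dynkin formula for (i)$\Rightarrow$(v), the injectivity of the index Whittaker transform of measures for (ii)$\Rightarrow$(i), and the resolvent identification for (v)$\Rightarrow$(i) --- are exactly the ones that make the hypergroup proof transplant to this setting, and each of the steps you actually carry out is correct.

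There is, however, one genuine hole in the logical structure as written. You announce the chain (ii)$\Leftrightarrow$(iii)$\Leftrightarrow$(iv) but only ever prove (ii)$\Rightarrow$(iii); the equivalence (iii)$\Leftrightarrow$(iv) is indeed a tautology given the eigenvalue identity, but no implication leading \emph{out} of the pair $\{$(iii),(iv)$\}$ is established. The arrows you provide are (i)$\Rightarrow$(ii), (ii)$\Rightarrow$(i), (ii)$\Rightarrow$(iii), (iii)$\Leftrightarrow$(iv) and (i)$\Leftrightarrow$(v), which leaves (iii) and (iv) as consequences of the other assertions but not equivalent to them. The missing step (iii)$\Rightarrow$(ii) is routine but must be said: conditioning the process in (iii) on $\mathcal{F}_s$ gives, for $g(u):=\mathbb{E}\bigl[\bm{W}_{\!\alpha,\Delta_\lambda}(X_u)\mid\mathcal{F}_s\bigr]$ (choosing a version right-continuous in $u$, which exists because $X$ is c\`adl\`ag and $\bm{W}_{\!\alpha,\Delta_\lambda}$ is bounded and continuous), the integral equation $g(t)=g(s)-\psi(\lambda)\int_s^t g(u)\,du$ almost surely; its unique solution is $g(t)=e^{-(t-s)\psi(\lambda)}\bm{W}_{\!\alpha,\Delta_\lambda}(X_s)$, which is precisely (ii). Adding this closes the cycle. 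A smaller point worth a sentence in a final write-up: in (i)$\Rightarrow$(v) the differentiation ${d \over dt}T_tf=T_t(Af)$ behind Dynkin's formula requires a domination argument, since the generator is defined only through compact convergence while $T_t$ integrates against the measure $\mu_t*\delta_x$, whose support need not be compact; the same care is needed (and taken) in the cited hypergroup proof.
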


\begin{proof}
The proof is identical to that of \cite[Theorem 3.4]{rentzschvoit2000}.
\end{proof}

A $*$-convolution semigroup $\{\mu_t\}_{t \geq 0}$ such that $\mu_1$ is a $*$-Gaussian measure will be called a \emph{$*$-Gaussian convolution semigroup}, and a $*$-Lévy process associated with a $*$-Gaussian convolution semigroup is said to be a \emph{$*$-Gaussian process}. An alternative characterization of $*$-Gaussian convolution semigroups (which in particular implies that any $*$-Gaussian convolution semigroup is fully composed of $*$-Gaussian measures) is given in the next lemma.

\begin{lemma} \label{lem:iw_gaussian_equiv}
Let $\mu \in \mathcal{P}_\mathrm{id}$ and let $\{\mu_t\}$ be the $*$-convolution semigroup $\{\mu_t\}$ such that $\mu_1 = \mu$. Then, the following conditions are equivalent:
\begin{enumerate}[itemsep=0pt,topsep=4pt]
\item[\textbf{(i)}] $\mu$ is a $*$-Gaussian measure;
\item[\textbf{(ii)}] $\lim_{t \downarrow 0} {1 \over t} \mu_t[\eps,\infty) = 0 \;$ for every $\eps > 0$;
\item[\textbf{(iii)}] $\lim_{t \downarrow 0} {1 \over t} (\mu_t*\delta_x)\bigl([0,\infty) \setminus (x-\eps,x+\eps)\bigr) = 0 \;$ for every $x \geq 0$ and $\eps > 0$.
\end{enumerate}
\end{lemma}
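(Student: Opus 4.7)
The plan is to establish the cycle (iii)$\Rightarrow$(ii), (ii)$\Rightarrow$(i), (i)$\Rightarrow$(iii). The first implication is immediate: setting $x = 0$ in (iii), since $\delta_0$ is the identity of $*$ we have $\mu_t * \delta_0 = \mu_t$ and $[0,\infty) \setminus (-\eps,\eps) = [\eps,\infty)$.

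For (ii)$\Rightarrow$(i), I would argue by contradiction. Suppose $\mu$ is not $*$-Gaussian, so $\mu = \mb{e}(a\nu_0) * \vartheta$ for some $a > 0$ and $\nu_0 \in \mathcal{P}[0,\infty) \setminus \{\delta_0\}$; by Proposition \ref{lem:iwconv_hatprod} the associated semigroup factors as $\mu_t = \mb{e}(ta\nu_0) * \vartheta_t$. Keeping only the $k = 1$ term of the series for $\mb{e}(ta\nu_0)$ yields the pointwise measure bound $\mb{e}(ta\nu_0) \geq e^{-ta}\,ta\,\nu_0$, and monotonicity of $*$ on positive measures gives $\mu_t \geq e^{-ta}\,ta\,(\nu_0 * \vartheta_t)$. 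Since $\vartheta_t \warrow \delta_0$ and $*$-convolution is weakly continuous (a consequence of Proposition \ref{prop:iwmeas_props}(iii)-(iv) combined with Proposition \ref{lem:iwconv_hatprod}), $\nu_0 * \vartheta_t \warrow \nu_0$. Picking $\eps_0 > 0$ with $\nu_0(\eps_0,\infty) > 0$ and applying the Portmanteau theorem on the open set $(\eps_0/2,\infty)$, one obtains $\liminf_{t \downarrow 0}\frac{1}{t}\mu_t(\eps_0/2,\infty) \geq a\,\nu_0(\eps_0,\infty) > 0$, contradicting (ii).

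The delicate direction is (i)$\Rightarrow$(iii), where $\mathrm{supp}(\delta_x * \delta_y) = [0,\infty)$ rules out the finite-propagation argument used in the hypergroup literature. Fix $x, \eps > 0$ and set $B = [0,\infty) \setminus (x-\eps, x+\eps)$. Since $x \notin B$, I would write $(\mu_t * \delta_x)(B) = \int_{(0,\infty)} (\delta_z * \delta_x)(B)\, \mu_t(dz)$ and split the outer integral at some $\eta > 0$. The explicit form of $q(x,z,y)$ in Theorem \ref{thm:bW_prodform}, together with \eqref{eq:bW_prodform_kernbound} and the asymptotic $D_{2\alpha}(t) \sim t^{2\alpha}e^{-t^2/4}$, shows that $q(x,z,y)\mathrm{m}(y)$ decays super-polynomially in $z$ as $z \downarrow 0$ (with dominant factor $\exp(-c(x,y)/z^2)$ for $y$ in a compact neighborhood of $B$, and Gaussian decay in $y$ for $y$ large), yielding $(\delta_z * \delta_x)(B) \leq C_p\, z^p$ for every $p > 0$ and small $z$. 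For a $*$-Gaussian semigroup the Lévy measure in \eqref{eq:iw_levykhin} vanishes, and the standard characterization of this measure as the vague limit of $\frac{1}{t}\mu_t|_{(\delta,\infty)}$ yields both (i)$\Rightarrow$(ii) as a byproduct and the moment estimate $\int z^p\, \mu_t(dz) = o(t)$ for $p > 2$; inserting these into the splitting of the integral above gives $(\mu_t * \delta_x)(B) = o(t)$.

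The main obstacle is the $o(t)$ moment estimate $\int z^p\, \mu_t(dz) = o(t)$ for Gaussian $\mu_t$, which has to be derived from the vanishing of the Lévy measure using the delicate asymptotics of the index Whittaker eigenfunctions \eqref{eq:bW_liminfty}-\eqref{eq:bW_itau_asymp}. An alternative route via Dynkin's locality criterion for sample-path continuity of the associated Feller process (applied after identifying the Gaussian generator $A$ on $\mathcal{D}_A$ with a local differential operator of the form $-c\mathcal{L}$, using that $\mathcal{L}\phi(x) = 0$ whenever $\phi \in \mathrm{C}_\mathrm{c}^2(0,\infty)$ vanishes near $x$) sidesteps the moment estimate but requires a detailed study of $\mathcal{D}_A$ in the Gaussian case.
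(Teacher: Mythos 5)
Your (iii)$\Rightarrow$(ii) is the same trivial step the paper takes, and your (ii)$\Rightarrow$(i) is correct and genuinely different from the paper's route: the paper obtains the whole equivalence (i)$\Leftrightarrow$(ii) by verifying the hypotheses of Volkovich's Theorem 3.1 for generalized convolution algebras, whereas your extraction of the $k=1$ term from $\mb{e}(ta\nu_0)$, monotonicity of $*$ on positive measures, and the Portmanteau bound give (ii)$\Rightarrow$(i) from first principles. That buys independence from \cite{volkovich1988} for this direction; the cost is that you still owe (i)$\Rightarrow$(ii), which in your plan only appears ``as a byproduct'' of an unestablished fact (see below).

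The genuine gap is (i)$\Rightarrow$(iii). Your argument rests on three ingredients that are not available in this setting without substantial extra work: that the measure $\nu$ in \eqref{eq:iw_levykhin} vanishes when $\mu$ is $*$-Gaussian (plausible, but Proposition \ref{prop:iw_levykhin} asserts only the existence of a representation, so this needs an argument via truncation of $\nu$ and Definition \ref{def:iw_gaussian}); that $\nu$ is the vague limit of ${1 \over t}\mu_t$ restricted to $(\delta,\infty)$, which is precisely the kind of statement that requires Volkovich-type machinery here and cannot be imported as ``standard''; and the moment estimate $\int z^p\,\mu_t(dz) = o(t)$ for $p>2$, which you yourself identify as the main obstacle and do not prove (the Dynkin/locality alternative is likewise only sketched). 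The paper sidesteps all of this by proving (ii)$\Rightarrow$(iii) directly for an \emph{arbitrary} $\mu \in \mathcal{P}_\mathrm{id}$ -- no Gaussian hypothesis. The mechanism is exactly the one your kernel estimate points at: \eqref{eq:bW_prodform_kernbound} gives $(\mathcal{T}^x \mathds{1}_{E_\eps})(y) \leq C\, y^{-1} \exp\bigl(-\eps^2/(4xy)^2\bigr)$ for small $y$, which is $o(y^2)$ and hence eventually dominated by $1-\bm{W}_{\!\alpha,\Delta_\lambda}(y) \sim \lambda y^2$; integrating this comparison against $\mu_t$ yields ${1 \over t}(\mu_t*\delta_x)(E_\eps) \leq {1 \over t}\bigl(1-\widehat{\mu_t}(\lambda)\bigr) + {1 \over t}\mu_t[\delta',\infty)$, the second term vanishes by (ii), the first tends to $\psi_\mu(\lambda)$, and letting $\lambda \downarrow 0$ kills it. If you prefer to keep your splitting at $\eta$, note that you do not actually need an $o(t)$ moment bound: the $O(t)$ estimate $\int_{[0,\eta)} z^2\,\mu_t(dz) \leq \lambda^{-1}\bigl(1-\widehat{\mu_t}(\lambda)\bigr)$ together with the superpolynomial smallness of $(\delta_z*\delta_x)(B)$ in $z$ gives $\limsup_t {1 \over t}\int_{[0,\eta)} (\delta_z*\delta_x)(B)\,\mu_t(dz) \leq C\eta^{p-2}$, and sending $\eta \downarrow 0$ at the end closes the argument -- but this is the paper's trick in disguise, and it shows the Gaussian detour is unnecessary.
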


\begin{proof}
\textbf{\emph{(i)$\!\iff\!$(ii):\;}} Let $\{t_n\}_{n \in \mathbb{N}}$ be any sequence of positive numbers such that $t_n \to 0$ as $n \to \infty$. We know that $\widehat{\mu_t}(\lambda) = \bigl(\widehat{\mu}(\lambda)\bigr)^{\!t}$ for all $t > 0$ and that $\widehat{\mu}(\lambda) > 0$ for all $\lambda$ (see Lemma \ref{lem:iwinfdivis_zeros_idempotent}), so we deduce
\begin{equation} \label{eq:iw_gauss_equivdef_pf1}
\lim_{n \to \infty} \bigr[ \mb{e}\bigl(\tfrac{1}{t_n} \mu_{t_{n\!}}\bigr) \bigr]^{\!\mathlarger\wedge} (\lambda) = \lim_{n \to \infty} \exp\biggl[ {1 \over t_n}\bigl(\widehat{\mu_{t_n}}(\lambda) - 1\bigr) \biggr] = \widehat{\mu}(\lambda), \qquad \lambda > 0.
\end{equation}
Hence, by Proposition \ref{prop:iwmeas_props}(iv), $\mb{e}\bigl(\tfrac{1}{t_n} \mu_{t_n}\bigr) \warrow \mu$ as $n \to \infty$. Notice that \eqref{eq:iw_gauss_equivdef_pf1} shows, in particular, that $\limsup_{n \to \infty} {1 \over t_n}\bigl(1 - \widehat{\mu_{t_n}}(\lambda)\bigr) < \infty$. Using Theorem 3.1 of Volkovich \cite{volkovich1988}, we conclude that $\mu$ is Gaussian if and only if
\[
\lim_{n \to \infty} {1 \over t_n} \mu_{t_n}[\eps,\infty) = 0 \qquad \text{for every } \eps > 0.
\]

\textbf{\emph{(ii)$\!\iff\!$(iii):\;}} To prove the nontrivial direction, assume that (ii) holds, and fix $x, \eps > 0$ with $0 < x < \eps$. Write $E_\eps = [0,\infty) \setminus (x-\eps, x+\eps)$, and let $\mathds{1}_\eps$ denote its indicator function. We start the proof by establishing an upper bound for the function $(\mathcal{T}^x \mathds{1}_\eps)(y)$, with $y>0$ small. Using the estimate \eqref{eq:bW_prodform_kernbound}, together with the inequality
\[
\begin{aligned}
{(x^2-\xi^2)^2 \over 8(xy\xi)^2} & \geq {({\eps \over x}+{\eps \over \xi})^2 \over (4y)^2} + {(x^2-\xi^2)^2 \over (4\delta x\xi)^2} \\[2pt]
& \geq {\eps^2 \over (4xy)^2} + {(x^2-\xi^2)^2 \over (4\delta x\xi)^2}
\end{aligned} \qquad\qquad (y \leq \delta, \; \xi \in E_\eps)
\]
it is easily seen that
\[
q(x,y,\xi) \leq C_1\, y^{-1} (\xi+\xi^{-1}) \exp\biggl( {1 \over 4\xi^2} - {\eps^2 \over (4xy)^2} - {(x^2-\xi^2)^2 \over (4\delta x\xi)^2} \biggr), \qquad\qquad y \leq \delta, \; \xi \in E_\eps
\]
where the constant $C_1 > 0$ depends only on $x$, $\delta$ and $\eps$. Consequently,
\begin{align}
\nonumber(\mathcal{T}^x \mathds{1}_\eps)(y) & = \int_{E_\eps} q(x,y,\xi) \mathrm{m}(\xi) d\xi \\
& \nonumber \leq C_1\, y^{-1} \exp\biggl(- {\eps^2 \over (4xy)^2} \biggr) \! \int_0^\infty \xi^{-4\alpha} (1+\xi^2) \exp\biggl(- {1 \over 4\xi^2} - {(x^2-\xi^2)^2 \over (4\delta x\xi)^2} \biggr) d\xi \\
\label{eq:iw_gaussian_char_translineq} & \leq C_2\, y^{-1} \exp\biggl(- {\eps^2 \over (4xy)^2} \biggr), \qquad\quad y \leq \delta 
\end{align}
the convergence of the integral justifying that the last inequality holds for a possibly larger constant $C_2$.

Let $\lambda > 0$ be arbitrary. If $\delta > 0$ is sufficiently small, then from \eqref{eq:iwmeas_infdiv_bound_pf0} and \eqref{eq:iw_gaussian_char_translineq} it follows that
\[
{(\mathcal{T}^x \mathds{1}_\eps)(y) \over 1 - \bm{W}_{\!\alpha,\Delta_{\lambda\!\!}}(y)} \leq {C_2 \over \lambda} \, y^{-3} \exp\biggl(- {\eps^2 \over (4xy)^2} \biggr), \qquad y \leq \delta
\]
and therefore there exists $\delta' < \delta$ such that ${(\mathcal{T}^x \mathds{1}_\eps)(y) \leq 1 - \bm{W}_{\!\alpha,\Delta_{\lambda\!\!}}(y)}$ for all $y \in [0,\delta')$. We then estimate
\begin{align*}
{1 \over t} (\mu_t*\delta_x)(E_\eps) & = {1 \over t} \int_{[0,\infty)\!} (\mathcal{T}^x \mathds{1}_\eps)(y) \mu_t(dy) \\
& \leq {1 \over t} \int_{[0,\delta')\!} \bigl( 1 - \bm{W}_{\!\alpha,\Delta_{\lambda\!\!}}(y) \bigr) \mu_t(dy) + {1 \over t} \mu_t [\delta',\infty) \\
& \leq {1 \over t} \int_{[0,\infty)\!} \bigl( 1 - \bm{W}_{\!\alpha,\Delta_{\lambda\!\!}}(y) \bigr) \mu_t(dy) + {1 \over t} \mu_t [\delta',\infty) \\
& = {1 \over t} \bigl( 1-\widehat{\mu_t}(\lambda) \bigr) + {1 \over t} \mu_t [\delta',\infty).
\end{align*}
Since we are assuming that (ii) holds and we know from \eqref{eq:iw_gauss_equivdef_pf1} that $\lim_{t \downarrow 0} {1 \over t} \bigl( 1-\widehat{\mu_t}(\lambda) \bigr) = -\log\widehat{\mu}(\lambda)$, the above inequality gives
\[
\limsup_{t \downarrow 0} {1 \over t} (\mu_t*\delta_x)(E_\eps) \leq -\log\widehat{\mu}(\lambda).
\]
By the properties of the index Whittaker transform, the right-hand side is continuous and vanishes for $\lambda = 0$, so from the arbitrariness of $\lambda$ we see that $\lim_{t \downarrow 0} {1 \over t} (\mu_t*\delta_x)(E_\eps) = 0$, as desired.
\end{proof}

Going back to Example \ref{exam:iw_diffusion}, it follows from \cite[Theorem 4.5]{mckean1956} that the index Whittaker diffusion is such that $\lim_{t\downarrow 0} {1 \over t} p_{t,x}\bigl([0,\ \infty)\setminus(x-\eps,x+\eps)\bigr) = 0$ for any $x \geq 0$ and $\eps > 0$, meaning that the index Whittaker diffusion is a $*$-Gaussian process. It turns out that, as a consequence of the previous lemma, any other $*$-Gaussian process is also a one-dimensional diffusion:

\begin{corollary}
Let $X = \{X_t\}_{t \geq 0}$ be a $*$-Gaussian process. Then:
\begin{enumerate}[itemsep=0pt,topsep=4pt]
\item[\textbf{(i)}] $X$ has a modification whose paths are a.s.\ continuous;
\item[\textbf{(ii)}] Let $A$ be the generator of the $*$-Gaussian convolution semigroup associated with $X$. Then $A$ is a local operator, i.e., $(Af)(x) = (Ag)(x)$ whenever $f, g \in \mathcal{D}_{\! A} \cap \mathrm{C}_0[0,\infty)$ and $f=g$ on some neighborhood of $x \geq 0$.
\end{enumerate} 
\end{corollary}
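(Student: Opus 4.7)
The whole argument rests on exploiting condition (iii) of Lemma \ref{lem:iw_gaussian_equiv}, which supplies, for every $x \geq 0$ and $\eps > 0$, the quantitative ``small jump'' estimate
\[
(\mu_t * \delta_x)\bigl([0,\infty) \setminus (x-\eps, x+\eps)\bigr) = o(t), \qquad t \downarrow 0.
\]
Since $(T_t f)(x) = \int_{[0,\infty)} f(\xi)\,(\mu_t * \delta_x)(d\xi)$ (this follows from unrolling the definition of $T_t = \mathcal{T}^{\mu_t}$, Fubini, and the definition of the index Whittaker convolution), this is precisely a statement about the mass that the transition law at time $t$ puts outside an $\eps$-neighborhood of the starting point. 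Both parts of the corollary are then immediate consequences.

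\emph{Part (ii) (locality of $A$).} Let $f, g \in \mathcal{D}_{\!A} \cap \mathrm{C}_0[0,\infty)$ with $f = g$ on a neighborhood $(x-\eps, x+\eps) \cap [0,\infty)$ of $x$. In particular $f(x) = g(x)$, so
\[
(Af)(x) - (Ag)(x) \;=\; \lim_{t \downarrow 0} \frac{1}{t}\bigl[(T_t f)(x) - (T_t g)(x)\bigr] \;=\; \lim_{t \downarrow 0} \frac{1}{t}\int_{[0,\infty)\setminus(x-\eps,x+\eps)} (f-g)(\xi)\,(\mu_t * \delta_x)(d\xi),
\]
where the integration domain shrinks because $f - g$ vanishes on $(x-\eps,x+\eps) \cap [0,\infty)$. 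The absolute value of the right-hand side is bounded by $\|f-g\|_{\infty} \cdot t^{-1}(\mu_t * \delta_x)\bigl([0,\infty)\setminus(x-\eps,x+\eps)\bigr)$, which tends to $0$ by Lemma \ref{lem:iw_gaussian_equiv}(iii). Hence $(Af)(x) = (Ag)(x)$.

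\emph{Part (i) (continuity of paths).} Since $X$ is a Feller process we already have a càdlàg modification, so we only need to rule out non-trivial jumps. The condition in Lemma \ref{lem:iw_gaussian_equiv}(iii) is exactly the classical Dynkin--Kinney criterion: a càdlàg (strong) Markov process on $[0,\infty)$ whose transition law satisfies $t^{-1} P_x\bigl[X_t \notin (x-\eps, x+\eps)\bigr] \to 0$ for every $x \geq 0$ and $\eps > 0$ admits an almost surely continuous modification (see, e.g., Dynkin's original continuity criterion, or the arguments underlying \cite[Theorem 4.5]{mckean1956}). Concretely one invokes the estimate on the first exit time $\tau_\eps = \inf\{t > 0 : |X_t - X_0| \geq \eps\}$: from Lemma \ref{lem:iw_gaussian_equiv}(iii) and the strong Markov property one checks that $P_x[\tau_\eps \leq t] = o(t)$ as $t \downarrow 0$, which by the Feller (and hence strong Markov) property propagates along the trajectory and forces all jumps of size $\geq \eps$ to be absent; letting $\eps \downarrow 0$ along a sequence yields the continuous modification.

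\emph{Main obstacle.} Part (ii) is essentially a two-line calculation once the identity $(T_t h)(x) = \int h\,d(\mu_t * \delta_x)$ is in place. The only delicate point is part (i): invoking the Dynkin--Kinney criterion cleanly in the present setting requires a mild uniformity of the $o(t)$ estimate, which is obtainable either by running the sharper inequalities from the proof of Lemma \ref{lem:iw_gaussian_equiv} with constants depending continuously on $x$ on compact sets, or by appealing directly to the classical Feller-process argument based on stopping times and the strong Markov property. Either route is standard, so I expect no genuine difficulty beyond a careful citation.
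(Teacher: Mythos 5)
Your proposal is correct and follows essentially the same route as the paper: both arguments hinge on condition (iii) of Lemma \ref{lem:iw_gaussian_equiv}, the path continuity in (i) is obtained from the Dynkin--Kinney-type criterion for Feller processes (the paper cites Corollary 3 of \cite{schilling1994} for exactly this, including the uniformity issue you flag), and your two-line computation for (ii) is just an unfolding of the locality result the paper cites from \cite{bottcher2013}. No gaps; the only cosmetic difference is that you prove in-line what the paper delegates to references.
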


\begin{proof}
According to Lemma \ref{lem:iw_gaussian_equiv}, the $*$-Gaussian convolution semigroup $\{\mu_t\}$ associated with $X$ satisfies $\lim_{t \downarrow 0} {1 \over t} (\mu_t*\delta_x)\bigl([0,\infty) \setminus (x-\eps,x+\eps)\bigr) = 0 \;$ for every $x \geq 0$ and $\eps > 0$. Applying Corollary 3 of \cite{schilling1994}, we conclude that the càdlàg modification of $X$ has a.s.\ continuous sample paths. The locality of the generator then follows from Theorem 1.40 of \cite{bottcher2013}.

(The two results used in the proof are stated for processes with state space $\mathbb{R}^d$, but we can apply them to the $\mathbb{R}$-valued process $\widetilde{X} = \{\widetilde{X}_t\}_{t \geq 0}$ which is the extension of $X$ obtained by setting $\widetilde{X}_t(\omega) = x$ whenever the initial distribution is $\nu = \delta_x$, $x < 0$.)
\end{proof}

\subsection{Moment functions} \label{sec:iw_moment}

Moment functions for the index Whittaker convolution are functions having the same additivity property which is satisfied by the monomials under the classical convolution:

\begin{definition}
The sequence of functions $\{\varphi_k\}_{k=1,\ldots,n}$ is said to be a \emph{$*$-moment sequence (of length $n$)} if $\varphi_k \in \mathcal{E}$ for $k=1,\ldots,n$ (cf.\ \eqref{eq:iw_transl_Espace_def}) and
\begin{equation} \label{eq:iw_moment_def}
(\mathcal{T}^y \varphi_k)(x) = \sum_{j=0}^k \binom{k}{j} \varphi_j(x) \varphi_{k-j}(y) \qquad\quad (k=1,\ldots,n; \; x, y \geq 0)
\end{equation}
where $\varphi_0(x) := 1$ ($x \geq 0$).
\end{definition}

It is worth recalling that for $x,y > 0$ the left-hand side of \eqref{eq:iw_moment_def} is given by $\int_0^\infty \! \varphi_k(\xi) q(x,y,\xi)\, \mathrm{m}(\xi) d\xi$, the integral being absolutely convergent. This actually implies that $*$-moment functions are necessarily smooth:

\begin{lemma} \label{lem:iw_transl_Cinfty}
If $\{\varphi_k\}_{k=1,\ldots,n}$ is a $*$-moment sequence, then $\varphi_k \in \mathrm{C}^\infty(0,\infty)$ for all $k$.
\end{lemma}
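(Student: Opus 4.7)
The plan is induction on $k$, the base case $k=0$ being immediate since $\varphi_0 \equiv 1$. For the inductive step, I isolate the $j = k$ term in the moment identity \eqref{eq:iw_moment_def} (which equals $\varphi_k(x)$ as $\varphi_0 \equiv 1$), obtaining, at any fixed $y_0 > 0$,
\[
\varphi_k(x) \;=\; (\mathcal{T}^{y_0} \varphi_k)(x) \;-\; \varphi_k(y_0) \;-\; \sum_{j=1}^{k-1} \binom{k}{j} \varphi_j(x)\, \varphi_{k-j}(y_0).
\]
By the induction hypothesis, the sum on the right is $\mathrm{C}^\infty$ in $x$ on $(0,\infty)$, so the whole problem reduces to proving that $x \mapsto (\mathcal{T}^{y_0}\varphi_k)(x)$ is $\mathrm{C}^\infty$ in $x$ near any given $x_0 > 0$. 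Since $y_0$ is free, I would choose $y_0 \ne x_0$ and work on a compact interval $I \subset (0,\infty)$ separating $x_0$ from $y_0$; this local choice suffices to get smoothness of $\varphi_k$ at each $x_0 > 0$.

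Next I would justify differentiation under the integral sign in $(\mathcal{T}^{y_0}\varphi_k)(x) = \int_0^\infty \varphi_k(\xi)\, q(x, y_0, \xi)\, \mathrm{m}(\xi)\, d\xi$. Iterating the recurrence \eqref{eq:bW_prodform_kernderiv}, the derivative $\partial_x^n q_\alpha(x, y_0, \xi)$ is a finite sum of terms of the form $R_{n,i}(x, \xi)\, q_{\alpha + i/2}(x, y_0, \xi)$, with $R_{n,i}$ a rational function in $x$ and $\xi$ bounded on $I$ up to polynomial factors in $\xi^{\pm 1}$. For each such term I would apply the kernel envelope \eqref{eq:bW_prodform_kernbound} (valid for all real $\alpha$) and the defining bound $|\varphi_k(\xi)| \leq b_1 \exp\bigl(\tfrac{1}{2\xi^2} + b_2(\xi^{-\beta} + \xi^\beta)\bigr)$ that comes from $\varphi_k \in \mathcal{E}$.

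The heart of the plan is then to tally the exponents of the resulting integrand. Combining the $e^{1/(4x^2) + 1/(4\xi^2) - y_0^2/(8x^2\xi^2) - (x^2-\xi^2)^2/(8(xy_0\xi)^2)}$ from the kernel bound, the $e^{-1/(2\xi^2)}$ from $\mathrm{m}(\xi)$, and the $e^{1/(2\xi^2) + b_2(\xi^{-\beta} + \xi^\beta)}$ from $\varphi_k$, and expanding $(x^2-\xi^2)^2 = x^4 - 2x^2\xi^2 + \xi^4$, the net coefficient of $\xi^{-2}$ becomes
\[
\tfrac{1}{4} - \tfrac{y_0^2}{8x^2} - \tfrac{x^2}{8y_0^2},
\]
which by AM–GM is $\leq 0$ with equality iff $x = y_0$. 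Because $x$ is bounded away from $y_0$ on $I$, this coefficient admits a strict negative bound $-c < 0$ uniformly in $x \in I$. Since $\beta < 2$, the quantity $-c\,\xi^{-2} + b_2\,\xi^{-\beta}$ tends to $-\infty$ as $\xi \to 0^+$, while the residual $-\xi^2/(8 x^2 y_0^2)$ yields Gaussian decay at $\xi \to \infty$ and the polynomial factors are harmless. Hence the integrand is dominated uniformly in $x \in I$ by an integrable function of $\xi$, and differentiation under the integral is legitimate for every $n$, completing the induction.

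The main obstacle is exactly this near-cancellation at $\xi \to 0^+$: the exponents $+\tfrac{1}{4\xi^2}$ in the kernel bound and $+\tfrac{1}{2\xi^2}$ in the $\mathcal{E}$-bound barely fail to be absorbed by $-\tfrac{1}{2\xi^2}$ from $\mathrm{m}$, and integrability is recovered only through the strict AM–GM slack that holds precisely because $x \neq y_0$. This is what forces the local choice of $y_0$ and explains why the argument must be run pointwise in $x_0$, rather than with a single global $y_0$.
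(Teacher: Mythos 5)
Your argument is correct, and it takes a genuinely different route from the paper's. You isolate $\varphi_k(x)$ directly from the identity \eqref{eq:iw_moment_def} at a fixed $y_0$ and prove smoothness of $x \mapsto (\mathcal{T}^{y_0}\varphi_k)(x)$ by dominating $\varphi_k(\xi)\,\partial_x^n q(x,y_0,\xi)\,\mathrm{m}(\xi)$ in $\xi$; your exponent bookkeeping is right (combining \eqref{eq:bW_prodform_kernbound}, the $\mathcal{E}$-envelope and the weight $\mathrm{m}$, the coefficient of $\xi^{-2}$ is indeed $\tfrac14 - \tfrac{y_0^2}{8x^2} - \tfrac{x^2}{8y_0^2} \leq 0$, vanishing exactly on the diagonal $x=y_0$), and the diagonal obstruction you flag is genuine rather than an artifact of the estimates --- for a general $f \in \mathcal{E}$ with $b_2>0$, $\beta>0$ the defining integral of $(\mathcal{T}^y f)(x)$ at $x=y$ really does lose all exponential decay at $\xi \to 0$ --- so your pointwise choice of $y_0 \neq x_0$ is not optional. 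The paper instead dualizes: it pairs the moment identity with a test function $f \in \mathrm{C}_\mathrm{c}^\infty(2M,3M)$ normalized so that $\int f\,\mathrm{m} = 1$, uses the symmetry \eqref{eq:iw_gentransl_symmweig} to move the translation onto $f$, and shows that $y \mapsto \int_0^\infty \varphi_k(x)(\mathcal{T}^y f)(x)\,\mathrm{m}(x)\,dx$ is $\mathrm{C}^\infty$ on $(0,M)$; the $j=k$ term of the paired identity is then precisely $\varphi_k(y)$, and the needed decay comes from the separation between $y<M$ and $\mathrm{supp} f \subset (2M,3M)$, built in once and for all so that no patching over $x_0$ is required (at the price of justifying \eqref{eq:iw_gentransl_symmweig} for $f \in \mathrm{C}_\mathrm{c}^\infty$ and $g \in \mathcal{E}$). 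Both proofs rest on the same two ingredients --- the envelope \eqref{eq:bW_prodform_kernbound}, valid for all real $\alpha$, and the derivative recursion \eqref{eq:bW_prodform_kernderiv} whose iteration produces finite sums $\sum_j R_j(x,\xi)\, q_{\alpha_j}(x,y_0,\xi)$ --- and yours is arguably the more direct of the two.
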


\begin{proof}
Let $M > 0$ and $1 \leq k \leq n$. Let $f \in  \mathrm{C}_\mathrm{c}^\infty(2M,3M)$ be such that $\int_{2M}^{3M} f(x) \, \mathrm{m}(x) dx = 1$, and set $f(x) = 0$ for $x \notin (2M,3M)$. Then
\[
\sum_{j=0}^k \binom{k}{j} \varphi_j(y) \! \int_0^\infty \! \varphi_{k-j}(x) f(x)\, \mathrm{m}(x) dx \, = \int_0^\infty (\mathcal{T}^y \varphi_k)(x) \, f(x)\, \mathrm{m}(x) dx \, = \int_0^\infty \! \varphi_k(x) \, (\mathcal{T}^y f)(x)\, \mathrm{m}(x) dx.
\]
where the second equality follows from the identity \eqref{eq:iw_gentransl_symmweig}, which is easily seen to hold also for $f \in \mathrm{C}_\mathrm{c}^\infty(0,\infty)$ and $g \in \mathcal{E}$. Hence if we prove that the right-hand side is an infinitely differentiable function of $0 < y < M$, then by induction it follows that each $\varphi_k \in \mathrm{C}^\infty(0,M)$ and, by arbitrariness of $M$, $\varphi_k \in \mathrm{C}^\infty(0,\infty)$.

By \eqref{eq:bW_prodform_kernbound}, we have
\begin{align*}
(\mathcal{T}^y f)(x) & \leq {C_1 \|f\|_\infty \over xy} \exp\biggl( {1 \over 4y^2} \biggr) \int_{2M}^{3M} \xi^{-4\alpha} (x^2+y^2+\xi^2)^{2\alpha} \exp\biggl( - {1 \over 4\xi^2} - {x^2 \over 8(y\xi)^2} - {(y^2-\xi^2)^2 \over 8(xy\xi)^2} \biggr) d\xi \\
& \leq C_2 \|f\|_{\infty\,} y^{-1} (1+x^{4\alpha}) \exp\biggl( {1 \over 4y^2} - {x^2 \over 8(3M^2)^2} - {1 \over 8x^2}\biggr) 
\end{align*}
where $C_1$ and $C_2$ are constants depending only on $M$. Since $\varphi_k \in \mathcal{E}$, we find that
\begin{equation} \label{eq:iw_moment_Cinfty_pf1}
\varphi_k(x) \, (\mathcal{T}^y f)(x) \, \mathrm{m}(x) \leq C \, y^{-1} x \, (1 + x^{-4\alpha}) \exp\biggl( {1 \over 4y^2} + b_2(x^\beta + x^{-\beta}) - {x^2 \over 8(3M^2)^2} - {1 \over 8x^2}\biggr)
\end{equation}
where $C > 0$, $b_2 \geq 0$ and $0 \leq \beta < 2$ do not depend on $y$. Denoting the right-hand side of \eqref{eq:iw_moment_Cinfty_pf1} by $J(x,y)$, it is easily seen that the integral $\int_0^\infty J(x,y) dx$ converges locally uniformly and, therefore, $\int_0^\infty \! \varphi_k(x) \, (\mathcal{T}^y f)(x)\, \mathrm{m}(x) dx$ is a continuous function of $0 < y < M$. Using the identity
\[
{\partial q_\alpha(x,y,\xi) \over \partial y} = {x^2 + \xi^2 - y^2 \over 2y^3 (x \xi)^2} \, q_{\alpha + {1 \over 2}}(x,y,\xi) - \bigl( y^{-3} + (1-2\alpha) y^{-1} \bigr) \, q_\alpha(x,y,\xi)
\]
(which follows from \cite[Equation 8.2(16)]{erdelyiII1953}) and similar arguments, one can derive an upper bound for the derivatives ${\partial^p \over \partial y^p} (\mathcal{T}^y f)(x)\,$ ($p=1,2,\ldots$) and then deduce that $\int_0^\infty \! \varphi_k(x) \, (\mathcal{T}^y f)(x)\, \mathrm{m}(x) dx$ is $p$ times continuously differentiable.
\end{proof}

\begin{proposition} \label{prop:iw_moment_oderep}
If $\{\varphi_k\}_{k=1,\ldots,n}$ is a $*$-moment sequence, then there exist $\lambda_1, \ldots, \lambda_n \in \mathbb{R}$ such that
\[
\mathcal{L}\varphi_k(x) = \sum_{j=1}^k \binom{k}{j} \lambda_j \varphi_{k-j}(x), \qquad \varphi_k(0) = 0, \qquad \varphi_k'(0) = 0 \qquad\quad (k = 1, \ldots, n),
\]
where $\mathcal{L}$ is the differential operator \eqref{eq:iw_Lop}.
\end{proposition}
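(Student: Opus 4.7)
The plan is to prove all three claims by induction on $k$, extracting information from the defining identity \eqref{eq:iw_moment_def} in three complementary ways: by setting the variables to zero, by differentiating in $y$ and letting $y \to 0$, and by applying the operators $\mathcal{L}_x$ and $\mathcal{L}_y$. For the boundary conditions, I would set $x = 0$ in \eqref{eq:iw_moment_def}, using the convention $(\mathcal{T}^y f)(0) = f(y)$: after the $j = k$ term cancels via $\varphi_0 \equiv 1$, this yields $0 = \sum_{j=1}^{k} \binom{k}{j} \varphi_j(0) \varphi_{k-j}(y)$ for all $y \geq 0$, and $\varphi_k(0) = 0$ follows by induction on $k$. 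To obtain $\varphi_k'(0) = 0$, I would differentiate \eqref{eq:iw_moment_def} in $y$: by the symmetry $(\mathcal{T}^y \varphi_k)(x) = (\mathcal{T}^x \varphi_k)(y)$ together with Lemma \ref{lem:iw_transl_Espace_limits}(ii), the left-hand side vanishes as $y \to 0$, and by the induction hypothesis the terms involving $\varphi_{j}'(0)$ for $j < k$ are already zero, forcing $\varphi_k'(0) = 0$.

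For the ODE, I would apply $\mathcal{L}_x$ and separately $\mathcal{L}_y$ to both sides of \eqref{eq:iw_moment_def}; the right-hand side is smooth in $(x,y)$ by Lemma \ref{lem:iw_transl_Cinfty}, so this is just termwise differentiation. The crucial step is that $u(x,y) := (\mathcal{T}^y \varphi_k)(x)$ satisfies the PDE $\mathcal{L}_x u = \mathcal{L}_y u$, which forces
\[
\sum_{j=0}^k \binom{k}{j} \mathcal{L}\varphi_j(x)\, \varphi_{k-j}(y) \;=\; \sum_{j=0}^k \binom{k}{j} \varphi_j(x)\, \mathcal{L}\varphi_{k-j}(y).
\]
For $k = 1$ this immediately yields $\mathcal{L}\varphi_1(x) = \mathcal{L}\varphi_1(y)$, hence $\mathcal{L}\varphi_1 \equiv \lambda_1$ is a real constant. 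For $k \geq 2$, I would substitute the induction hypothesis $\mathcal{L}\varphi_j = \sum_{i=1}^j \binom{j}{i} \lambda_i \varphi_{j-i}$ ($j < k$) and rearrange the resulting double sums using the identity $\binom{k}{j}\binom{j}{i} = \binom{k}{i}\binom{k-i}{j-i}$; after cancellation of all interaction terms one is left with
\[
\mathcal{L}\varphi_k(x) - \sum_{i=1}^{k-1} \binom{k}{i} \lambda_i \varphi_{k-i}(x) \;=\; \mathcal{L}\varphi_k(y) - \sum_{i=1}^{k-1} \binom{k}{i} \lambda_i \varphi_{k-i}(y),
\]
a quantity which, being simultaneously a function of $x$ alone and of $y$ alone, must equal a real constant $\lambda_k$; this closes the induction.

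The main technical obstacle is to justify rigorously the PDE identity $\mathcal{L}_x(\mathcal{T}^y \varphi_k)(x) = \mathcal{L}_y(\mathcal{T}^y \varphi_k)(x)$ for $\varphi_k \in \mathcal{E}$. I would establish it by differentiating twice under the integral sign in the representation $(\mathcal{T}^y \varphi_k)(x) = \int_0^\infty \varphi_k(\xi)\, q(x,y,\xi)\, \mathrm{m}(\xi)\, d\xi$, controlling the resulting integrands by estimates analogous to those of Lemma \ref{lem:iw_transl_Espace_lem1} (with companion bounds on the $y$-derivatives obtained from the analogue of \eqref{eq:bW_prodform_kernderiv} for $\partial_y q$). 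This reduces the claim to the pointwise identity $\mathcal{L}_x q(x,y,\xi) = \mathcal{L}_y q(x,y,\xi)$, which is transparent from the index-integral representation of $q$ in Theorem \ref{thm:bW_prodform}: since $\mathcal{L}\bm{W}_{\!\alpha, \Delta_\lambda} = \lambda \bm{W}_{\!\alpha, \Delta_\lambda}$, applying $\mathcal{L}_x$ or $\mathcal{L}_y$ under the $\lambda$-integral both produce the same factor $\lambda$ multiplying the integrand.
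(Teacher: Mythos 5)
Your proof is correct and follows essentially the same route as the paper's: induction on $k$, with the boundary conditions extracted from the product identity \eqref{eq:iw_moment_def} via the limits of Lemma \ref{lem:iw_transl_Espace_limits}, and the ODE obtained by applying $\mathcal{L}_x$ and $\mathcal{L}_y$ to \eqref{eq:iw_moment_def}, using $\mathcal{L}_x q = \mathcal{L}_y q$ and then separating variables to produce the constant $\lambda_k$. The differences are minor: you evaluate at $x=0$ rather than letting $x \to 0$ (the latter is slightly safer, since $\varphi_k \in \mathcal{E}$ is a priori only defined on $(0,\infty)$ and the conclusion $\varphi_k(0)=\varphi_k'(0)=0$ is really a statement about continuous/differentiable extension), you justify $\mathcal{L}_x q = \mathcal{L}_y q$ from the index-integral representation of $q$ instead of citing an external argument, and you correctly carry the binomial coefficients $\binom{k}{j}$ through the induction step, which the paper's intermediate display in fact drops.
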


\begin{proof}
First we will show that $\varphi_k \in \mathrm{C}^\infty(0,\infty) \cap \mathrm{C}^1[0,\infty)$ with $\varphi_k(0) = \varphi_k'(0) = 0$. We know that $\varphi_k \in \mathcal{E} \cap \mathrm{C}^\infty(0,\infty)$, so from Lemma \ref{lem:iw_transl_Espace_limits} it follows that for fixed $y > 0$ we have $\lim_{x \to 0} (\mathcal{T}^y \varphi_k)(x) = \varphi_k(y)$ and $\lim_{x \to 0} {\partial \over \partial x}(\mathcal{T}^y \varphi_k)(x) = 0$. If we rewrite \eqref{eq:iw_moment_def} as
\begin{equation} \label{eq:iw_moment_rewrit}
\varphi_k(x) = (\mathcal{T}^y \varphi_k)(x) - \sum_{j=0}^{k-1} \binom{k}{j} \varphi_j(x) \varphi_{k-j}(y)
\end{equation}
and let $x \to 0$ on the right-hand side, we deduce (by induction on $k$) that $\lim_{x \to 0} \varphi_k(x) = 0$ for all $k$. After differentiating both sides of \eqref{eq:iw_moment_rewrit}, we similarly find that $\lim_{x \to 0} \varphi_k'(x) = 0$ for each $k$.

We now prove that $\varphi_k$ satisfies the given ordinary differential equation, omitting the details which are similar to the proof of \cite[Theorem 4.5]{szekelyhidi2013}. Using the same kind of arguments as in the proof of \cite[Lemma 4.15]{sousayakubovich2017}, we see that $\mathcal{L}_x q(x,y,\xi) = \mathcal{L}_y q(x,y,\xi)$ (where $\mathcal{L}_x$ and $\mathcal{L}_y$ denote the differential operator \eqref{eq:iw_Lop} acting on the variable $x$ and $y$ respectively). Moreover, the identity \eqref{eq:bW_prodform_kernderiv} allows us to verify that the integral defining $(\mathcal{T}^y f)(x)$ can be differentiated under the integral sign. Therefore, the right-hand side of \eqref{eq:iw_moment_def} is, for each $k$, a solution of $\mathcal{L}_x u = \mathcal{L}_y u$, i.e.
\[
\sum_{j=0}^k \binom{k}{j} (\mathcal{L}\varphi_j)(x) \, \varphi_{k-j}(y) = \sum_{j=0}^k \binom{k}{j} \varphi_j(x) \, (\mathcal{L}\varphi_{k-j})(y).
\]
We can assume by induction that $\mathcal{L}\varphi_\ell(x) = \sum_{j=1}^\ell \binom{\ell}{j} \lambda_j \varphi_{\ell-j}(x)$ for $\ell = 1, \ldots, k-1$. Using the induction hypothesis and rearranging the terms in a suitable way, we find that
\[
(\mathcal{L} \varphi_k)(x) - \sum_{j=1}^{k-1} \lambda_j \varphi_{k-j}(x) = (\mathcal{L} \varphi_k)(y) - \sum_{j=1}^{k-1} \lambda_j \varphi_{k-j}(y), \qquad \text{ for all } x, y > 0
\]
and, consequently,
\[
(\mathcal{L} \varphi_k)(x) - \sum_{j=1}^{k-1} \lambda_j \varphi_{k-j}(x) = \lambda_k
\]
for some $\lambda_k \in \mathbb{R}$, finishing the proof.
\end{proof}

The functions $\widetilde{\varphi}_k$ defined as the unique solution of
\begin{equation} \label{eq:iw_moment_canonicalode}
\mathcal{L}\widetilde{\varphi}_k(x) = - k (1-2\alpha) \widetilde{\varphi}_{k-1}(x) - k (k-1) \widetilde{\varphi}_{k-2}(x), \qquad \widetilde{\varphi}_k(0) = 0, \qquad \widetilde{\varphi}_k'(0) = 0 \qquad\quad (k \in \mathbb{N})
\end{equation}
(where $\widetilde{\varphi}_{-1}(x) := 0$ and $\widetilde{\varphi}_0(x) := 1$) are said to be the \emph{canonical $*$-moment functions}. Of course, $\{\widetilde{\varphi}_k\}_{k=1,\ldots,n}$ is indeed a $*$-moment sequence (the reader can verify this by applying the representations below). By integration of the differential equation, we find the explicit recursive expression
\begin{equation} \label{eq:iw_moment_canonicalintrep}
\widetilde{\varphi}_k(x) = 4k \! \int_0^x {1 \over y^{2\,} \mathrm{m}(y)} \! \int_0^y \mathrm{m}(\xi) \, \bigl[(1-2\alpha) \widetilde{\varphi}_{k-1}(\xi) + (k-1) \widetilde{\varphi}_{k-2}(\xi)\bigr]\, d\xi\, dy, \qquad\quad k \in \mathbb{N}.
\end{equation}
Moreover, as a consequence of the uniqueness of solution for \eqref{eq:iw_moment_canonicalode} and the Laplace representation \eqref{eq:bW_laplacerep}, the canonical moment functions can also be represented as
\[
\widetilde{\varphi}_k(x) = {\partial^k \over \partial \sigma^k}\raisebox{-.5ex}{\Big|}_{\sigma = {1 \over 2} - \alpha\!\!} \bm{W}_{\!\alpha,\sigma}(x) \, = \int_{-\infty\!}^\infty \biggl[{\partial^k (e^{\sigma s\!}) \over \partial \sigma^k}\raisebox{-.5ex}{\Big|}_{\sigma = {1 \over 2} - \alpha}\biggr] \eta_x(s) ds \, = \int_{-\infty\!}^\infty s^k e^{({1 \over 2} - \alpha) s} \eta_x(s) ds.
\]
The first (canonical) moment function can be written in closed form:

\begin{proposition}
We have
\begin{equation} \label{eq:iw_moment_closedform}
\widetilde{\varphi}_1(x) = {1 \over \Gamma(1-2\alpha)} G_{23}^{31}\biggl( {1 \over 2x^2}\Bigm| \begin{matrix}
0, 1 \\[-1pt] \, 0, 0, 1-2\alpha
\end{matrix} \biggr)
\end{equation}
where $G_{pq}^{mn}\bigl(z\,| \begin{smallmatrix}
a_1, \ldots, a_p \\ b_1, \ldots, b_q
\end{smallmatrix}\bigr)$ denotes the Meijer-G function \cite[Section 8.2]{prudnikovIII1989}. In the particular case $\alpha = 0$, we have $\widetilde{\varphi}_1(x) = e^{1 \over 2x^2} \Gamma(0,{1 \over 2x^2})$, where $\Gamma(a,z)$ is the incomplete Gamma function \cite[Chapter IX]{erdelyiII1953}.
\end{proposition}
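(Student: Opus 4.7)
First, I would reduce the double integral in \eqref{eq:iw_moment_canonicalintrep} (with $k = 1$) to a single one. The substitution $u = \frac{1}{2\xi^2}$ evaluates the inner integral as
\begin{equation*}
\int_0^y \mathrm{m}(\xi)\, d\xi = 2^{2\alpha - 2}\, \Gamma\Bigl(2\alpha - 1,\; \tfrac{1}{2y^2}\Bigr),
\end{equation*}
where $\Gamma(a, z) = \int_z^\infty e^{-u} u^{a-1} du$ is the upper incomplete gamma function. A further substitution $v = \frac{1}{2y^2}$ in the outer integral then yields the single-integral representation
\begin{equation*}
\widetilde{\varphi}_1(x) = (1 - 2\alpha) \int_{\zeta}^\infty v^{-2\alpha} e^v\, \Gamma(2\alpha - 1, v)\, dv, \qquad \zeta := \tfrac{1}{2x^2}.
\end{equation*}

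Second, for the Meijer-G identification in the general case, the plan is to compare the Mellin transforms of both sides viewed as functions of $\zeta$. Writing $\Phi(\zeta)$ for the right-hand side above, Fubini gives $\mathcal{M}[\Phi](s) = \frac{1-2\alpha}{s} \int_0^\infty v^{s-2\alpha} e^v \Gamma(2\alpha - 1, v)\, dv$. Expanding the incomplete gamma back as $\int_v^\infty e^{-u} u^{2\alpha - 2} du$, interchanging the order of integration, and using the Maclaurin series of $e^v$ term-by-term reduces the computation to a Gauss hypergeometric series at unit argument; Gauss's summation formula ${}_2F_1(a, b; c; 1) = \Gamma(c)\Gamma(c-a-b)/[\Gamma(c-a)\Gamma(c-b)]$ then produces
\begin{equation*}
\mathcal{M}[\Phi](s) = \frac{\Gamma(s)\, \Gamma(1 - s)\, \Gamma(s - 2\alpha + 1)}{s\, \Gamma(1 - 2\alpha)}, \qquad 0 < \mathrm{Re}(s) < 1.
\end{equation*}
On the other hand, the Mellin transform of the Meijer-G function appearing in \eqref{eq:iw_moment_closedform} is read off directly from its Mellin-Barnes contour-integral definition as $\Gamma(s)^2\, \Gamma(1-s)\, \Gamma(s - 2\alpha + 1)/\Gamma(1+s)$, which via $\Gamma(1+s) = s\, \Gamma(s)$ coincides with $\Gamma(1-2\alpha)\, \mathcal{M}[\Phi](s)$. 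Mellin inversion then establishes \eqref{eq:iw_moment_closedform}.

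Third, the $\alpha = 0$ case follows directly from the single-integral representation. The derivative identity $\frac{d}{dv}\bigl[e^v \Gamma(-1, v)\bigr] = e^v \Gamma(-1, v) - v^{-2}$ shows that $v \mapsto e^v \Gamma(-1, v) - v^{-1}$ is an antiderivative of $e^v \Gamma(-1, v)$; it vanishes at $v = \infty$ by the asymptotic $\Gamma(-1, v) \sim v^{-2} e^{-v}$, hence $\widetilde{\varphi}_1(x) = \zeta^{-1} - e^\zeta \Gamma(-1, \zeta)$. The functional equation $\Gamma(-1, \zeta) = \zeta^{-1} e^{-\zeta} - \Gamma(0, \zeta)$ then cancels the $\zeta^{-1}$ terms and delivers the claimed closed form $\widetilde{\varphi}_1(x) = e^\zeta \Gamma(0, \zeta)$.

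The main technical step is the Mellin-transform calculation for general $\alpha$: justifying the successive Fubini interchanges (both valid in the strip $0 < \mathrm{Re}(s) < 1$, thanks to the standing assumption $\alpha < \frac{1}{2}$, which controls the behavior of $\Gamma(2\alpha-1,v)$ both near $v = 0$ and $v = \infty$), and evaluating the resulting hypergeometric series at unit argument via Gauss's theorem. Once the two Mellin transforms are identified, applying the Mellin inversion theorem is routine.
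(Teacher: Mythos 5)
Your proof is correct, but it takes a genuinely different route from the paper after the common first step. Both arguments begin by reducing \eqref{eq:iw_moment_canonicalintrep} (for $k=1$) to the single integral $(1-2\alpha)\int_{1/(2x^2)}^{\infty} v^{-2\alpha} e^{v}\, \Gamma(2\alpha-1,v)\, dv$ via the substitutions $u=\tfrac{1}{2\xi^2}$, $v=\tfrac{1}{2y^2}$. From there the paper proceeds by table look-up: it rewrites $v^{-2\alpha}e^{v}\Gamma(2\alpha-1,v)$ as a $G_{12}^{21}$ Meijer-G function using Prudnikov's relations 8.2.2.15 and 8.4.16.13, and then integrates it with Luke's formula 5.6.4(6) to raise the orders to $G_{23}^{31}$; the case $\alpha=0$ is again handled by a tabulated identity for that particular $G_{23}^{31}$. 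You instead compute the Mellin transform of the single-integral expression from first principles --- two Tonelli interchanges (legitimate, since the integrand is positive and the standing hypothesis $\alpha<\tfrac12$ gives $\Gamma(2\alpha-1,v)\sim \tfrac{v^{2\alpha-1}}{1-2\alpha}$ at $0$ and $\sim v^{2\alpha-2}e^{-v}$ at $\infty$, so everything converges exactly in the strip $0<\mathrm{Re}\,s<1$), summation via Gauss's theorem with $c-a-b=1-s$, and the identity $\Gamma(1+s)=s\,\Gamma(s)$ --- and match it against the gamma-quotient defining the $G_{23}^{31}$ through its Mellin--Barnes integral, whose poles are indeed separated by the same strip. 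I verified the resulting transform $\Gamma(s)\Gamma(1-s)\Gamma(s-2\alpha+1)/\bigl(s\,\Gamma(1-2\alpha)\bigr)$; it agrees on both sides, so Mellin inversion closes the argument. Your treatment of $\alpha=0$ is also more elementary than the paper's: the explicit antiderivative $e^{v}\Gamma(-1,v)-v^{-1}$ and the recurrence $\Gamma(0,\zeta)=-\Gamma(-1,\zeta)+\zeta^{-1}e^{-\zeta}$ give $e^{\zeta}\Gamma(0,\zeta)$ directly, bypassing the Meijer-G entirely. The trade-off: the paper's proof is shorter but opaque without the cited tables, whereas yours is self-contained, makes the provenance of the parameters $\bigl(\begin{smallmatrix}0,\,1\\ 0,\,0,\,1-2\alpha\end{smallmatrix}\bigr)$ transparent, and costs only the routine bookkeeping of convergence strips and Fubini justifications, which you have correctly identified as the main technical burden.
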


\begin{proof}
We know from \eqref{eq:iw_moment_canonicalintrep} that $\widetilde{\varphi}_1(x) = 4(1-2\alpha) \int_0^x {1 \over y^{2\,} \mathrm{m}(y)} \! \int_0^y \mathrm{m}(\xi) d\xi\, dy$. Consequently,
\begin{align*}
\widetilde{\varphi}_1(x) & = (1-2\alpha) \int_{1 \over 2x^2}^\infty v^{-2\alpha} e^v \int_v^\infty w^{2\alpha-2} e^{-w} dw\, dv \\
& = (1-2\alpha) \int_{1 \over 2x^2}^\infty v^{-2\alpha} e^v \Gamma(-1+2\alpha,v) dv \\
& = {1 \over \Gamma(1-2\alpha)} \int_{1 \over 2x^2}^\infty \mskip 0.6\thinmuskip G_{12}^{21}\biggl( v \Bigm| \begin{matrix}
-1 \\[-1pt] -2\alpha, -1
\end{matrix} \biggr) dv \\
& = {1 \over \Gamma(1-2\alpha)} G_{23}^{31}\biggl( {1 \over 2x^2}\Bigm| \begin{matrix}
0, 1 \\[-1pt] \, 0, 0, 1-2\alpha
\end{matrix} \biggr)
\end{align*}
where the first equality is obtained via a change of variables, the second equality follows from the definition of the incomplete Gamma function, the third step is due to \cite[Relations 8.2.2.15 and 8.4.16.13]{prudnikovIII1989} and the final step applies \cite[Equation 5.6.4(6)]{luke1969}. The result for $\alpha = 0$ follows from the identity $G_{23}^{31}\bigl( {1 \over 2x^2} | \begin{smallmatrix}
0, 1 \\[-1pt] \, 0, 0, 1
\end{smallmatrix} \bigr) = e^{1 \over 2x^2} \Gamma(0,{1 \over 2x^2})$, cf.\ \cite[Relations 8.2.2.9 and 8.4.16.13]{prudnikovIII1989}.
\end{proof}

Actually, the right-hand side of \eqref{eq:iw_moment_closedform} can be written (for $\alpha < {1 \over 2}$) as a sum of simpler special functions. Such representation can be obtained by applying \cite[Equation (A13)]{barrabes1999}.

Returning to moment functions of general order, it is clear from the explicit representation \eqref{eq:iw_moment_canonicalintrep} that $\widetilde{\varphi}_k(x) > 0$ for all $x > 0$ and $k \in \mathbb{N}$. We note that $\widetilde{\varphi}_2 \geq \widetilde{\varphi}_1^2$ (by Jensen's inequality applied to $\widetilde{\varphi}_k(x) = \int_{-\infty\!}^\infty s^k e^{({1 \over 2} - \alpha) s} \eta_x(s) ds$) and that the Taylor expansions of the first two moment functions as $x \to 0$ are
\begin{equation} \label{eq:iw_moment_canontaylor}
\widetilde{\varphi}_1(x) = 2(1-2\alpha) x^2 - 4(1-2\alpha)(1-\alpha) x^4 + o(x^4), \qquad\; \widetilde{\varphi}_2(x) = 4 x^2 - 4 (1+2\alpha-4\alpha^2) x^4 + o(x^4)
\end{equation}
(this follows from \eqref{eq:bW_limderivatives}, taking into account that ${d^j \over dx^j} \widetilde{\varphi}_k(x) = {\partial^k \over \partial \sigma^k}\smash{\raisebox{-.25ex}{\big|}_{\sigma = {1 \over 2} - \alpha}} {d^j \over dx^j} \bm{W}_{\!\alpha,\sigma}(x)$). Concerning the growth of the moment functions as $x \to \infty$, we have:

\begin{proposition}
Let $\eps > 0$. For each $k \in \mathbb{N}$, $\widetilde{\varphi}_k(x) = O(x^\eps)$ as $x \to \infty$.
\end{proposition}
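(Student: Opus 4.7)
The plan is to exploit the Laplace-type representation
\[
\widetilde{\varphi}_k(x) = \int_{-\infty}^\infty s^k e^{(\frac{1}{2}-\alpha) s} \eta_x(s)\, ds
\]
given in the text preceding \eqref{eq:iw_moment_closedform}, dominating the polynomial factor $s^k$ by a slowly growing exponential. Because $\eta_x$ is nonnegative, this reduces the asymptotics of $\widetilde{\varphi}_k(x)$ at infinity to the already-established asymptotic behavior \eqref{eq:bW_liminfty} of $\bm{W}_{\!\alpha,\nu}$ at parameters close to $\frac{1}{2}-\alpha$.

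More concretely, given $\eps > 0$ I would first fix $\eps' > 0$ with $\eps' < \frac{1}{2}-\alpha$ and $2\eps' \leq \eps$ (both constraints can be met simultaneously because $\alpha < \frac{1}{2}$; for large $\eps$ only the first is binding and the conclusion follows a fortiori from a smaller $\eps'$). Then, using the elementary bound $|s|^k \leq C_{k,\eps'}\, e^{\eps'|s|}$, where $C_{k,\eps'} := \sup_{s \in \mathbb{R}} |s|^k e^{-\eps'|s|} < \infty$, together with $e^{\eps'|s|} \leq e^{\eps' s} + e^{-\eps' s}$ and the nonnegativity of $\eta_x$, the Laplace representation \eqref{eq:bW_laplacerep} would yield
\[
|\widetilde{\varphi}_k(x)| \leq C_{k,\eps'} \Bigl[\bm{W}_{\!\alpha,\frac{1}{2}-\alpha+\eps'}(x) + \bm{W}_{\!\alpha,\frac{1}{2}-\alpha-\eps'}(x)\Bigr].
\]

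The closing step is a direct application of \eqref{eq:bW_liminfty}: since $\nu := \frac{1}{2}-\alpha \pm \eps'$ satisfies $\mathrm{Re}\,\nu > 0$ by our choice of $\eps'$, the first line of \eqref{eq:bW_liminfty} gives $\bm{W}_{\!\alpha,\frac{1}{2}-\alpha+\eps'}(x) = O(x^{2\eps'})$ and $\bm{W}_{\!\alpha,\frac{1}{2}-\alpha-\eps'}(x) = O(x^{-2\eps'})$ as $x \to \infty$, and therefore $\widetilde{\varphi}_k(x) = O(x^{2\eps'}) = O(x^\eps)$. I do not anticipate any substantial obstacle; the whole idea is that the nonnegativity of $\eta_x$ combined with the Laplace representation lets one trade the polynomial factor $s^k$ for an arbitrarily small exponential shift in the parameter $\nu$. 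A more hands-on alternative would be induction on $k$ based on the integral formula \eqref{eq:iw_moment_canonicalintrep}, where one would need separate careful handling of $\frac{1}{y^2\,\mathrm{m}(y)}\int_0^y \mathrm{m}(\xi)\,\widetilde{\varphi}_{k-1}(\xi)\,d\xi$ both near $y=0$ (via an integration by parts exploiting $\mathrm{m}(\xi) = \xi^{4-4\alpha}\tfrac{d}{d\xi}e^{-1/(2\xi^2)}$) and as $y\to\infty$ (using $\mathrm{m}(y) \sim y^{1-4\alpha}$); this works but is noticeably more laborious, so I would only fall back to it if the Laplace-representation approach ran into an unforeseen snag.
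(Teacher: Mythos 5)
Your proof is correct, but it follows a genuinely different route from the paper's. You work directly from the Laplace-type representation $\widetilde{\varphi}_k(x) = \int_{-\infty}^\infty s^k e^{({1 \over 2}-\alpha)s}\eta_x(s)\,ds$, dominate $|s|^k$ by $C_{k,\eps'}(e^{\eps' s}+e^{-\eps' s})$, and use the nonnegativity of $\eta_x$ to conclude $|\widetilde{\varphi}_k(x)| \leq C_{k,\eps'}\bigl[\bm{W}_{\!\alpha,{1 \over 2}-\alpha+\eps'}(x)+\bm{W}_{\!\alpha,{1 \over 2}-\alpha-\eps'}(x)\bigr]$, after which \eqref{eq:bW_liminfty} finishes the job; all the ingredients you invoke (the representation of $\widetilde{\varphi}_k$ as a $\sigma$-derivative of $\bm{W}_{\!\alpha,\sigma}$, the positivity of $\eta_x$ from Theorem \ref{thm:bW_laplacerep}, and the validity of \eqref{eq:bW_liminfty} for both $\nu = {1 \over 2}-\alpha\pm\eps' > 0$) are established in the paper before this proposition, so there is no gap. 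The paper instead argues by induction on $k$ from the recursion \eqref{eq:iw_moment_canonicalintrep}: assuming $\widetilde{\varphi}_j = O\bigl(\bm{W}_{\alpha,{1 \over 2}-\alpha+{\eps \over 2}}\bigr)$ for $j < k$, it feeds this bound into the double integral and evaluates the result in closed form via the identity \eqref{eq:bW_integraleq}, which shows that $\bm{W}_{\!\alpha,\nu}$ is essentially a fixed point of the Volterra-type operator appearing in \eqref{eq:iw_moment_canonicalintrep} — so the induction you dismiss as laborious is in fact quite clean, requiring none of the separate handling near $0$ and $\infty$ that you anticipate. Your argument is shorter, non-inductive, and makes the mechanism transparent (a derivative in the parameter $\sigma$ at an interior point of the strip is controlled by values of $\bm{W}_{\!\alpha,\sigma}$ at nearby real parameters, by positivity of the representing measure); the paper's argument yields the slightly more precise statement $\widetilde{\varphi}_k(x) = O\bigl(\bm{W}_{\alpha,{1 \over 2}-\alpha+{\eps \over 2}}(x)\bigr)$ and would survive in settings where a Laplace representation is unavailable but the eigenfunction identity \eqref{eq:bW_integraleq} still holds.
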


\begin{proof}
Due to \eqref{eq:bW_liminfty}, it suffices to prove that $\widetilde{\varphi}_k(x) = O\bigl(\bm{W}_{\alpha,{1 \over 2} - \alpha + {\eps \over 2}}(x)\bigr)$ as $x \to \infty$ for each $k \in \mathbb{N}$. This is trivial for $k = 0$ since $\widetilde{\varphi}_0 \equiv 1 = O(x^{\eps}) = O\bigl(\bm{W}_{\alpha,{1 \over 2} - \alpha + {\eps \over 2}}(x)\bigr)$. By induction, suppose that $\widetilde{\varphi}_j(x) = O\bigl(\bm{W}_{\alpha,{1 \over 2} - \alpha + {\eps \over 2}}(x)\bigr)$ for $j = 0, \ldots, k-1$. This implies that $\widetilde{\varphi}_j(x) \leq C \ccdot \bm{W}_{\alpha,{1 \over 2} - \alpha + {\eps \over 2}}(x)$ for all $x \geq 0$ and $j = 0, \ldots, k-1$ (where $C > 0$ does not depend on $x$). Recalling \eqref{eq:bW_integraleq} and \eqref{eq:iw_moment_canonicalintrep}, we find
\[
\widetilde{\varphi}_k(x) \leq C \! \int_0^x {1 \over y^{2\,} \mathrm{m}(y)} \! \int_0^y \mathrm{m}(\xi) \, \bm{W}_{\alpha,{1 \over 2} - \alpha + {\eps \over 2}}(\xi) d\xi\, dy = C \ccdot {1 \over \eps(2-4\alpha+\eps)} \bigl( \bm{W}_{\alpha,{1 \over 2} - \alpha + {\eps \over 2}}(x) - 1 \bigr)
\]
and therefore $\widetilde{\varphi}_k(x) = O\bigl(\bm{W}_{\alpha,{1 \over 2} - \alpha + {\eps \over 2}}(x)\bigr)$, proving the proposition.
\end{proof}

The previous proposition shows that the \emph{modified moments} $\mathbb{E}[\widetilde{\varphi}_k(X)]$ will only diverge if the tails of the random variable $X$ are very heavy. The next result shows that the modified moments can be computed via the index Whittaker transform:

\begin{proposition} \label{prop:iw_moment_finitenesschar}
Let $\mu \in \mathcal{P}[0,\infty)$ and $k \in \mathbb{N}$. The following assertions are equivalent:
\begin{enumerate} [itemsep=0pt,topsep=4pt]
\item[\textbf{(i)}] $\int_{[0,\infty)} \widetilde{\varphi}_k(x) \mu(dx) < \infty$;
\item[\textbf{(ii)}] $\sigma \mapsto \int_{[0,\infty)} \! \bm{W}_{\!\alpha,\sigma}(x) \mu(dx)$ is $k$ times differentiable on $[0,{1 \over 2} - \alpha]$.
\end{enumerate}
If (i) and (ii) hold, then $\int_{[0,\infty)} \widetilde{\varphi}_k(x) \mu(dx) = {\partial^k \over \partial \sigma^k}\raisebox{-.25ex}{\big|}_{\sigma = {1 \over 2} - \alpha\!} \bigl[ \int_{[0,\infty)} \! \bm{W}_{\!\alpha,\sigma}(x) \mu(dx) \bigr]$.
\end{proposition}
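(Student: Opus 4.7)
The strategy is to transfer both conditions to statements about differentiability of the two-sided Laplace transform of a finite measure on $[0,\infty)$, which can then be analyzed via convexity and monotone convergence. First I would apply the Laplace-type representation of Theorem \ref{thm:bW_laplacerep} together with Tonelli's theorem. Since $\eta_x(s) \geq 0$ and $\eta_x$ is even in $s$, this yields
\[
F(\sigma) := \int_{[0,\infty)\!} \bm{W}_{\!\alpha,\sigma}(x)\, \mu(dx) = \int_{[0,\infty)\!} \cosh(\sigma s)\, d\mu^*(s),
\]
where $\mu^*$ is the positive measure on $[0,\infty)$ with density $s \mapsto 2 \int_{[0,\infty)\!} \eta_x(s)\, \mu(dx)$; the bound \eqref{eq:bW_ineq_param} at $\nu = 0$ gives $F(0) \leq 1$, so $\mu^*$ is finite. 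The formula $\widetilde{\varphi}_k(x) = \int_{-\infty}^\infty s^k e^{(\frac{1}{2} - \alpha)s} \eta_x(s)\, ds$ and Tonelli similarly yield
\[
\int_{[0,\infty)\!} \widetilde{\varphi}_k(x)\, \mu(dx) = \int_{[0,\infty)\!} s^k h_k\bigl((\tfrac{1}{2} - \alpha)s\bigr)\, d\mu^*(s),
\]
with $h_k = \cosh$ for $k$ even and $h_k = \sinh$ for $k$ odd. The proposition thus reduces to showing that this integral is finite iff $F$ is $k$ times (one-sided) differentiable at $\sigma = \tfrac{1}{2} - \alpha$ along the interval $[0, \tfrac{1}{2} - \alpha]$.

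For the direction (i) $\Rightarrow$ (ii), the integrand $s^j h_j(\sigma s)$ (with $0 \leq j \leq k$ and $\sigma \in [0, \tfrac{1}{2} - \alpha]$) is dominated by the integrable function $(1 + s^k) h_k((\tfrac{1}{2} - \alpha)s)$, with integrability against $\mu^*$ following from (i) and the finiteness of $\mu^*$. Standard differentiation under the integral sign then gives $F^{(j)}(\sigma) = \int s^j h_j(\sigma s)\, d\mu^*(s)$ throughout $[0, \tfrac{1}{2} - \alpha]$, which yields (ii) together with the claimed identity at $\sigma = \tfrac{1}{2} - \alpha$.

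The converse (ii) $\Rightarrow$ (i) would be proved by induction on $k$, with the trivial case $k = 0$ as base. The induction hypothesis provides $F^{(k-1)}(\sigma) = \int s^{k-1} h_{k-1}(\sigma s)\, d\mu^*(s)$ throughout $[0, \tfrac{1}{2} - \alpha]$. The crux is that for each fixed $s \geq 0$ the maps $\sigma \mapsto \cosh(\sigma s)$ and $\sigma \mapsto \sinh(\sigma s)$ are convex and increasing on $[0, \infty)$; hence, writing $\sigma_0 = \tfrac{1}{2} - \alpha$, for $0 < h < \sigma_0$ the secant slope $[h_{k-1}(\sigma_0 s) - h_{k-1}((\sigma_0 - h)s)] / h$ increases monotonically to $s\, h_k(\sigma_0 s)$ as $h \downarrow 0$. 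Monotone convergence then yields
\[
F^{(k)}_{-}(\sigma_0) = \lim_{h \downarrow 0} \frac{F^{(k-1)}(\sigma_0) - F^{(k-1)}(\sigma_0 - h)}{h} = \int_{[0,\infty)\!} s^k h_k(\sigma_0 s)\, d\mu^*(s),
\]
and since the left-derivative is finite by (ii), (i) follows along with the stated equality. The main obstacle is precisely this reverse direction: one must convert the \emph{existence} of $F^{(k)}$ into a concrete \emph{integrability} statement, and this hinges on the monotonicity of the one-sided difference quotients of $\cosh$ and $\sinh$ on $[0,\infty)$, which in turn stems from their convexity and makes monotone convergence applicable.
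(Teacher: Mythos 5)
Your proof is correct and follows essentially the same route as the paper, which simply invokes the Laplace representation \eqref{eq:bW_laplacerep} and defers to the analogous argument of \cite[Theorem 4.11]{zeuner1989}: reduce both conditions to differentiability of the two-sided Laplace transform of the finite measure $\mu^*$, using dominated convergence for (i) $\Rightarrow$ (ii) and convexity of $\cosh$, $\sinh$ plus monotone convergence of the one-sided difference quotients for the converse. The only cosmetic flaw is that for odd $k$ your dominating function $(1+s^k)\,h_k\bigl((\tfrac{1}{2}-\alpha)s\bigr)$ vanishes at $s=0$ and hence does not dominate $\cosh(\sigma s)$ there; adding a constant (which is $\mu^*$-integrable because $\mu^*$ is finite) repairs this.
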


\begin{proof}
The proof relies heavily on the Laplace representation \eqref{eq:bW_laplacerep} and is completely analogous to that of \cite[Theorem 4.11]{zeuner1989}.
\end{proof}

The martingale property of $*$-moment functions applied to $*$-Lévy processes is given below.

\begin{proposition} \label{prop:iw_levy_momentmarting}
Let $\{\varphi_k\}_{k=1,2}$ be a pair of $*$-moment functions. Let  $X = \{X_t\}_{t \geq 0}$ be a $*$-Lévy process. Then:
\begin{enumerate}[itemsep=0pt,topsep=4pt]
\item[\textbf{(a)}] If $\mathbb{E}[\varphi_1(X_t)]$ exists for all $t > 0$, then the process $\bigl\{\varphi_1(X_t) - \mathbb{E}[\varphi_1(X_t)]\bigr\}_{t \geq 0}$ is a martingale;

\item[\textbf{(b)}] If, in addition, $\mathbb{E}[\varphi_2(X_t)]$ exists for all $t > 0$, then the process
\[
\bigl\{\varphi_2(X_t) - 2\varphi_1(X_t) \mathbb{E}[\varphi_1(X_t)] - \mathbb{E}[\varphi_2(X_t)] + 2\mathbb{E}[\varphi_1(X_t)]^2 \bigr\}_{t \geq 0}
\]
is a martingale.
\end{enumerate}
In particular, if we let $Y$ be the index Whittaker diffusion (Example \ref{exam:iw_diffusion}) and let $\lambda_1, \lambda_2$ be as in Proposition \ref{prop:iw_moment_oderep}, then the processes $\{\varphi_1(Y_t) - \lambda_1 t\}_{t \geq 0}$ and $\{\varphi_2(Y_t) - 2\lambda_1 t \varphi_1(Y_t) - \lambda_2 t + \lambda_1^2 t^2\}_{t \geq 0}$ are martingales.
\end{proposition}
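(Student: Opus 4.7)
The plan is to reduce everything to the $*$-moment identity $(\mathcal{T}^y\varphi_k)(x)=\sum_{j=0}^{k}\binom{k}{j}\varphi_j(x)\varphi_{k-j}(y)$ and the transition kernel of a $*$-Lévy process, which by definition is $\mu_{t-s}*\delta_{X_s}$. Writing $m_k(t) := \int_{[0,\infty)}\varphi_k(y)\,\mu_t(dy)$, a direct application of the semigroup identity $\mu_{t+s}=\mu_t*\mu_s$ together with the moment identity produces the functional equations
\[
m_1(t+s) = m_1(t)+m_1(s),\qquad m_2(t+s) = m_2(t)+m_2(s)+2m_1(t)m_1(s),
\]
so $m_1$ is additive while $m_2$ is additive only up to a bilinear correction. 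The Markov property combined with the same moment identity gives, for $0\le s\le t$,
\[
\mathbb{E}[\varphi_1(X_t)\mid\mathcal{F}_s] = \varphi_1(X_s)+m_1(t-s),\qquad \mathbb{E}[\varphi_2(X_t)\mid\mathcal{F}_s] = \varphi_2(X_s)+2\varphi_1(X_s)m_1(t-s)+m_2(t-s),
\]
and, applied with the distribution $X_t\sim\mu_t * P_{X_0}$, also yields $\mathbb{E}[\varphi_1(X_t)] = \mathbb{E}[\varphi_1(X_0)]+m_1(t)$ and $\mathbb{E}[\varphi_2(X_t)] = \mathbb{E}[\varphi_2(X_0)]+2\mathbb{E}[\varphi_1(X_0)]m_1(t)+m_2(t)$.

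For (a), the additivity of $m_1$ gives $m_1(t-s) = \mathbb{E}[\varphi_1(X_t)] - \mathbb{E}[\varphi_1(X_s)]$, and the martingale property for $\varphi_1(X_t)-\mathbb{E}[\varphi_1(X_t)]$ follows at once by subtraction. For (b), I would first rewrite the quadratic functional equation in the form $m_2(t)-m_2(s) = m_2(t-s)+2m_1(s)m_1(t-s)$ and combine it with the expression for $\mathbb{E}[\varphi_2(X_t)]$ to obtain
\[
\mathbb{E}[\varphi_2(X_t)]-\mathbb{E}[\varphi_2(X_s)] = m_2(t-s) + 2\mathbb{E}[\varphi_1(X_s)]\,m_1(t-s).
\]
Plugging this together with $\mathbb{E}[\varphi_1(X_t)] = \mathbb{E}[\varphi_1(X_s)]+m_1(t-s)$ into the conditional expectation of the candidate process and expanding the quadratic term $2\mathbb{E}[\varphi_1(X_t)]^2$ makes every occurrence of $m_1(t-s)$ and $m_2(t-s)$ cancel against the cross term $-2(\varphi_1(X_s)+m_1(t-s))\mathbb{E}[\varphi_1(X_t)]$, leaving precisely $M_s$.

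For the diffusion corollary, Example \ref{exam:iw_diffusion} gives $\widehat{\mu_t}(\lambda)=e^{-t\lambda}$, and Proposition \ref{prop:iw_moment_finitenesschar} yields
\[
m_k(t) = \frac{\partial^k}{\partial\sigma^k}\bigg|_{\sigma=\frac{1}{2}-\alpha} \exp\!\bigl(-t((\tfrac{1}{2}-\alpha)^2-\sigma^2)\bigr),
\]
whence direct differentiation gives $m_1(t)=(1-2\alpha)\,t$ and $m_2(t)=2t+(1-2\alpha)^2 t^2$. Taking $Y_0=0$ so that $\widetilde\varphi_k(Y_0)=0$ and matching the coefficients of $t$ and $t^2$ against the constants $\lambda_1,\lambda_2$ obtained from Proposition \ref{prop:iw_moment_oderep} (so that $m_1(t)=\lambda_1 t$ and $m_2(t)=\lambda_2 t+\lambda_1^2 t^2$), the two explicit martingales drop out of parts (a) and (b). The main obstacle is the algebraic bookkeeping in (b): several summands built from $m_1$ and $m_2$ at the arguments $t$, $s$ and $t-s$ must be combined using both the additivity of $m_1$ and the non-additive quadratic functional equation for $m_2$, and it is precisely the latter that accommodates the bilinear cross term $2\varphi_1(X_t)\mathbb{E}[\varphi_1(X_t)]$ present in the statement of the martingale.
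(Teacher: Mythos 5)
Your proof of (a) and (b) is correct and is, in substance, exactly the argument the paper invokes by reference: the paper's proof of (a)--(b) is the single sentence that it is identical to Zeuner's Proposition 6.11, and what you write out --- the conditional-expectation formulas obtained by applying the binomial identity for $(\mathcal{T}^y\varphi_k)(x)$ to the kernel $\mu_{t-s}*\delta_x$, the additive and quadratic functional equations for $m_1,m_2$ coming from $\mu_{t+s}=\mu_t*\mu_s$, and the cancellation of every occurrence of $m_1(t-s)$ and $m_2(t-s)$ in (b) --- is that argument made explicit; I checked the bookkeeping in (b) and it closes. For the final statement the paper cites Rentzsch--Voit to obtain $\mathbb{E}[\varphi_1(Y_t)]=\lambda_1 t$ and $\mathbb{E}[\varphi_2(Y_t)]=\lambda_1^2t^2+\lambda_2 t$, whereas you compute $m_1(t)=(1-2\alpha)t$ and $m_2(t)=2t+(1-2\alpha)^2t^2$ directly by differentiating $\sigma\mapsto e^{-t((\frac{1}{2}-\alpha)^2-\sigma^2)}$ at $\sigma=\tfrac{1}{2}-\alpha$ via Proposition \ref{prop:iw_moment_finitenesschar}; this is slightly more self-contained, it also delivers the finiteness of the modified moments that the paper checks separately, and the two values are correct.

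The one step you should not let pass silently is the identification $m_1(t)=\lambda_1 t$ and $m_2(t)=\lambda_2 t+\lambda_1^2 t^2$. With $\lambda_1,\lambda_2$ literally as in Proposition \ref{prop:iw_moment_oderep} (defined through $\mathcal{L}$, which is the \emph{negative} of the generator of $Y$), the canonical sequence satisfies $\mathcal{L}\widetilde{\varphi}_1=-(1-2\alpha)$ and $\mathcal{L}\widetilde{\varphi}_2=-2(1-2\alpha)\widetilde{\varphi}_1-2$, i.e.\ $\lambda_1=-(1-2\alpha)$ and $\lambda_2=-2$, so your correctly computed moments give $m_1(t)=-\lambda_1 t$ and $m_2(t)=-\lambda_2 t+\lambda_1^2t^2$. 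Equivalently, Dynkin's formula with generator $-\mathcal{L}$ gives $\tfrac{d}{dt}\mathbb{E}[\varphi_1(Y_t)]=-\lambda_1$, so the compensated process is $\{\varphi_1(Y_t)+\lambda_1 t\}$ rather than $\{\varphi_1(Y_t)-\lambda_1 t\}$ (this is also what Theorem \ref{thm:iw_moment_diffusion_martchar} asserts, where $\lambda_1=-1$ and the martingale is $\phi_1(Y_t)-t=\phi_1(Y_t)+\lambda_1 t$). The sign slip is already present in the statement you were asked to prove, so you have not introduced an error the paper avoids; but your matching step absorbs the inconsistency rather than exposing it, and a careful write-up should either flag the sign or define the constants $\lambda_j$ through the generator $-\mathcal{L}$ instead of $\mathcal{L}$.
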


\begin{proof}
The proof of (a)--(b) is identical to that of \cite[Proposition 6.11]{zeuner1992}.

If $\{\mu_t\}$ is the $*$-Gaussian convolution semigroup associated with the index Whittaker diffusion, then $\widehat{\mu_t}(\lambda) = e^{-t\lambda}$; consequently (by Proposition \ref{prop:iw_moment_finitenesschar})
\[
\mathbb{E}[\widetilde{\varphi}_k(Y_t)] < \infty \: \text{ for all } \, t \geq 0, \qquad\quad \lim_{t \downarrow 0} \mathbb{E}[\widetilde{\varphi}_k(Y_t)] = 0 \qquad\quad (k = 1,2).
\]
It follows from Proposition \ref{prop:iw_moment_oderep} that any pair $\{\varphi_k\}_{k=1,2}$ can be written as a linear combination of $\widetilde{\varphi}_1$ and $\widetilde{\varphi}_2$, hence we also have $\lim_{t \downarrow 0} \mathbb{E}[\varphi_k(Y_t)] = 0$ for $k = 1,2$. We can now use (the proof of) \cite[Equations (4.5) and (4.6)]{rentzschvoit2000} to deduce that $\mathbb{E}[\varphi_1(Y_t)] = \lambda_1 t$ and $\mathbb{E}[\varphi_2(Y_t)] = \lambda_1^2 t^2 + \lambda_2 t$, and this concludes the proof.
\end{proof}

\subsection{Lévy-type characterization of the Shiryaev process} \label{sec:iw_shiryaev_levychar}

In the remainder of this paper we will show that the martingale property given in the last statement of the previous proposition is in fact a characterization of the index Whittaker diffusion and, therefore, yields a Lévy-type characterization for the Shiryaev process. For this purpose, it is convenient to focus on the moment functions $\phi_1$ and $\phi_2$ that correspond to the choice $\lambda_1 = -1$ and $\lambda_2 = 0$, i.e.
\begin{align*}
\phi_1(x) & = \int_0^x {1 \over y^{2\,} \mathrm{m}(y)} \! \int_0^y \mathrm{m}(\xi) \, d\xi\, dy \, = \, {1 \over 4(1-2\alpha)} \widetilde{\varphi}_1(x) \\
\phi_2(x) & = 2\int_0^x {1 \over y^{2\,} \mathrm{m}(y)} \! \int_0^y \mathrm{m}(\xi) \, \phi_1(\xi) \, d\xi\, dy = {1 \over 4(1-2\alpha)} \Bigl[\widetilde{\varphi}_2(x) - {2 \over 1-2\alpha} \widetilde{\varphi}_1(x)\Bigr].
\end{align*}

In the following results, we write
\[
\mathrm{C}^{k,\ell}[0,\infty) := \{f \in \mathrm{C}^k[0,\infty): f\raisebox{-.2ex}{\big|}_{[0,\eps)\!} \in \mathrm{C}^\ell[0,\eps) \text{ for some } \eps > 0\}
\]
and we denote by $[X] = \{[X]_t\}_{t \geq 0}$ the quadratic variation of a stochastic process $X$.

\begin{lemma} \label{lem:iw_moment_philemma}
\textbf{(a)} If $f \in \mathrm{C}^{2,4}[0,\infty)$ with $f'(0) = f'''(0) = 0$, then there exists $h \in \mathrm{C}^2[0,\infty)$ with $f(x) = h(\phi_1(x))$ for $x \geq 0$. \\[-10pt]

\textbf{(b)} There exists a unique function $h_0 \in  \mathrm{C}^2[0,\infty)$ such that $h_0(\phi_1(x)) = \phi_2(x)$, and it satisfies $h_0''(x) > 0$ for all $x \geq 0$.
\end{lemma}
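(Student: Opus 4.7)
The plan rests on two structural facts. First, each canonical $*$-moment function extends to a $\mathrm{C}^\infty$ function of $x^2$ in a right-neighborhood of $0$: writing $\phi_k(x) = \Phi_k(x^2)$, one has $\Phi_1(v) = v/2 + O(v^2)$ and $\Phi_2(v) = (1-2\alpha)v^2 + O(v^3)$. This would follow from the representation $\widetilde{\varphi}_k(x) = \partial_\sigma^k|_{\sigma=\frac{1}{2}-\alpha} \bm{W}_{\alpha,\sigma}(x)$ together with \eqref{eq:bW_limderivatives}, which shows that all odd-order derivatives of $\bm{W}_{\alpha,\sigma}$ at the origin vanish; by the standard division lemma, $\bm{W}_{\alpha,\sigma}(\cdot)$ is then a smooth function of $x^2$ near $0$. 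Second, $\phi_1\colon[0,\infty)\to[0,\infty)$ is a $\mathrm{C}^\infty$ bijection: the identity $\phi_1'(x) = (x^2 \mathrm{m}(x))^{-1} \int_0^x \mathrm{m}(\xi)\, d\xi > 0$ on $(0,\infty)$ gives strict monotonicity, while the large-$x$ behavior of $\mathrm{m}$ yields $\phi_1(x) \sim (2-4\alpha)^{-1} \log x$ as $x\to\infty$, hence surjectivity.

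For part (a), I would first apply a further division lemma: any $f \in \mathrm{C}^{2,4}[0,\infty)$ with $f'(0) = f'''(0) = 0$ can be written as $f(x) = F(x^2)$ for some $F \in \mathrm{C}^2$ in a right-neighborhood of $0$. Since $\Phi_1'(0) = 1/2 \neq 0$, the inverse function theorem supplies a $\mathrm{C}^\infty$ local inverse $\Phi_1^{-1}$ near $0$, and then $h(u) := F(\Phi_1^{-1}(u))$ is $\mathrm{C}^2$ near $u=0$. On $(0,\infty)$ one sets $h(u) := f(\phi_1^{-1}(u))$; the two definitions agree on their overlap via the identity $\phi_1^{-1}(u) = \sqrt{\Phi_1^{-1}(u)}$, yielding $h \in \mathrm{C}^2[0,\infty)$ with $h\circ\phi_1 = f$.

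For part (b) I would follow the same scheme applied to $\phi_2$: since $\phi_2 \in \mathrm{C}^\infty$ near $0$ with $\phi_2'(0)=\phi_2'''(0)=0$ (by the first paragraph), part (a) supplies a unique $h_0 \in \mathrm{C}^2[0,\infty)$ with $h_0\circ\phi_1 = \phi_2$, the uniqueness being immediate from bijectivity of $\phi_1$. To show $h_0'' > 0$, two differentiations of $\phi_2 = h_0\circ\phi_1$ give $h_0''(\phi_1(x)) = \phi_1'(x)^{-1} (\phi_2'/\phi_1')'(x)$ on $(0,\infty)$; combined with $\phi_2'(x) = 2(x^2 \mathrm{m}(x))^{-1}\int_0^x \phi_1(\xi)\mathrm{m}(\xi)\, d\xi$, a direct computation produces
\[
\Bigl(\frac{\phi_2'}{\phi_1'}\Bigr)'(x) = \frac{2\,\mathrm{m}(x) \int_0^x \bigl(\phi_1(x)-\phi_1(\xi)\bigr)\,\mathrm{m}(\xi)\, d\xi}{\bigl(\int_0^x \mathrm{m}(\xi)\, d\xi\bigr)^2} > 0 \qquad (x>0)
\]
by strict monotonicity of $\phi_1$. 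At $u=0$ the expansions of $\Phi_1,\Phi_2$ from the first paragraph give $h_0(u) = 4(1-2\alpha)u^2 + O(u^3)$, so $h_0''(0) = 8(1-2\alpha) > 0$ by continuity, completing the positivity. The main obstacle is the smooth $x^2$-expansion at the origin used in the first paragraph, which is a genuine regularity statement going beyond the purely algebraic Taylor data in \eqref{eq:iw_moment_canontaylor}; once it is in place, the remaining steps are routine applications of the chain rule and the inverse function theorem.
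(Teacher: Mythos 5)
Your proof is correct and follows essentially the same route as the paper, which simply records the Taylor expansions $\phi_1(x)=c_1x^2+c_2x^4+o(x^4)$, $\phi_2(x)=c_3x^4+o(x^4)$ and then defers to Lemmas 5.7 and 5.8 of Rentzsch--Voit; your write-up is precisely the ``obvious adaptation'' of that argument (division lemma at the origin, strict monotonicity and surjectivity of $\phi_1$, and the sign of $(\phi_2'/\phi_1')'$ via the quotient rule) spelled out in full. The regularity-in-$x^2$ point you flag as the main obstacle is indeed the only nontrivial input, and it is available from \eqref{eq:bW_limderivatives} together with the identity for ${d^j \over dx^j}\widetilde{\varphi}_k$ stated just before \eqref{eq:iw_moment_canontaylor}, exactly as you indicate.
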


\begin{proof}
From \eqref{eq:iw_moment_canontaylor} we find that the Taylor expansions of the moment functions $\phi_1$ and $\phi_2$ as $x \to 0$ are of the form $\phi_1(x) = c_1 x^2 + c_2 x^4 + o(x^4)$ and $\phi_2(x) = c_3 x^4 + o(x^4)$, with $c_1, c_3 > 0$. Consequently, the result is proved as in \cite[Lemmas 5.7 and 5.8]{rentzschvoit2000}, with the obvious adaptations.
\end{proof}

\begin{lemma} \label{lem:iw_moment_martlemma}
Let $X = \{X_t\}_{t \geq 0}$ be an $[0,\infty)$-valued process with a.s.\ continuous paths and such that the processes $Z\text{\large\mathstrut}_{X}^{-\mathcal{L},\phi_j}$ defined by \eqref{eq:iw_levy_Zdef} are local martingales for $j=1,2$. Then 
\[
\bigl[ Z\text{\large\mathstrut}_{X}^{-\mathcal{L},\phi_1} \bigr]_t = \frac{1}{2} \int_0^t X_s^2 (\phi_1'(X_s))^{2\,} ds \qquad \text{ almost surely}.
\]
Moreover, $Z\text{\large\mathstrut}_{X}^{-\mathcal{L},f}$ is a local martingale whenever $f \in \mathrm{C}^{2,4}[0,\infty)$ with $f'(0) = f'''(0) = 0$.
\end{lemma}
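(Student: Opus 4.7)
The plan is to exploit the continuous semimartingale structure forced on $X$ by the two hypotheses. Since $-\mathcal{L}\phi_1 = 1$ and $-\mathcal{L}\phi_2 = 2\phi_1$ (Proposition~\ref{prop:iw_moment_oderep} with $\lambda_1=-1$, $\lambda_2=0$), the local martingale property of $Z_{X}^{-\mathcal{L},\phi_1}$ says precisely that the process $Y_t := \phi_1(X_t)$ admits the semimartingale decomposition $Y_t = Y_0 + M_t + t$, where $M_t := Z_{X,t}^{-\mathcal{L},\phi_1}$ is a continuous local martingale. In particular $[Z_{X}^{-\mathcal{L},\phi_1}]_t = [M]_t = [Y]_t$, so the whole problem reduces to computing $[Y]$.

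\textbf{Computation of $[Y]$.} By Lemma~\ref{lem:iw_moment_philemma}(b), there exists $h_0 \in \mathrm{C}^2[0,\infty)$ with $h_0'' > 0$ and $\phi_2(X_t) = h_0(Y_t)$. Itô's formula applied to the $C^2$ function $h_0$ and the continuous semimartingale $Y$ yields
\[
\phi_2(X_t) - \phi_2(X_0) \,=\, \int_0^t h_0'(Y_s)\,dM_s \,+\, \int_0^t h_0'(Y_s)\,ds \,+\, \tfrac{1}{2}\int_0^t h_0''(Y_s)\,d[Y]_s.
\]
The first summand is a local martingale because $h_0'\circ Y$ is locally bounded. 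Subtracting from this identity the local martingale $Z_{X,t}^{-\mathcal{L},\phi_2} = \phi_2(X_t)-\phi_2(X_0) - 2\int_0^t Y_s\,ds$, we see that
\[
\int_0^t \bigl(h_0'(Y_s) - 2 Y_s\bigr)\,ds \,+\, \tfrac{1}{2}\int_0^t h_0''(Y_s)\,d[Y]_s
\]
is a continuous local martingale of finite variation starting at $0$, hence identically zero. Since $h_0'' > 0$, this forces $d[Y]_s = \frac{2(2Y_s - h_0'(Y_s))}{h_0''(Y_s)}\,ds$.

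\textbf{Algebraic identity and martingale property for general $f$.} The heart of the argument is then the pointwise identity
\[
\frac{2\bigl(2\phi_1(x) - h_0'(\phi_1(x))\bigr)}{h_0''(\phi_1(x))} \,=\, \tfrac{1}{2} x^2 (\phi_1'(x))^2 \qquad (x \geq 0).
\]
Differentiating $\phi_2 = h_0 \circ \phi_1$ twice yields $h_0'(\phi_1) = \phi_2'/\phi_1'$ and $h_0''(\phi_1)(\phi_1')^2 = \phi_2'' - (\phi_2'/\phi_1')\phi_1''$; multiplying by $x^2$ and substituting the two ODEs $x^2\phi_1'' + ([x^2\mathrm{m}]'/\mathrm{m})\phi_1' = 4$ and $x^2\phi_2'' + ([x^2\mathrm{m}]'/\mathrm{m})\phi_2' = 8\phi_1$ coming from Proposition~\ref{prop:iw_moment_oderep}, the first-order coefficient cancels, leaving $x^2 h_0''(\phi_1)(\phi_1')^2 = 4\bigl(2\phi_1 - h_0'(\phi_1)\bigr)$, which is the claimed identity on $(0,\infty)$ and extends to $x=0$ by continuity. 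Combined with the previous paragraph, this proves $[Z_{X}^{-\mathcal{L},\phi_1}]_t = \frac{1}{2}\int_0^t X_s^2 (\phi_1'(X_s))^2\,ds$. For the second statement, given $f \in \mathrm{C}^{2,4}[0,\infty)$ with $f'(0)=f'''(0)=0$, Lemma~\ref{lem:iw_moment_philemma}(a) supplies $h \in \mathrm{C}^2[0,\infty)$ with $f = h\circ\phi_1$; applying Itô's formula to $h(Y_t)$ and inserting the formula for $d[Y]_s$ just derived, a direct chain-rule computation (together with $-\mathcal{L}\phi_1 = 1$) identifies the sum of the two finite-variation integrals with $\int_0^t (-\mathcal{L}f)(X_s)\,ds$, whence $Z_{X,t}^{-\mathcal{L},f} = \int_0^t h'(Y_s)\,dM_s$ is a local martingale. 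The main obstacle is the algebraic identity above; apart from that, the argument is Itô's formula plus the standard fact that a continuous local martingale of finite variation starting at zero vanishes.
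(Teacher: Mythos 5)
Your proof is correct and follows essentially the same route as the paper's: Itô's formula applied to $h_0\circ\phi_1=\phi_2$, the observation that a continuous local martingale of locally finite variation vanishes, the strict positivity $h_0''>0$ from Lemma \ref{lem:iw_moment_philemma}(b), and Lemma \ref{lem:iw_moment_philemma}(a) for the general statement. Your ``algebraic identity'' is exactly the paper's chain-rule computation $\mathcal{L}(h_0(\phi_1))(x)=-\tfrac{x^2}{4}h_0''(\phi_1(x))(\phi_1'(x))^2-h_0'(\phi_1(x))=\mathcal{L}\phi_2(x)=-2\phi_1(x)$ in a slightly different guise, so the two arguments coincide in substance.
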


\begin{proof}
This proof is analogous to that of \cite[Lemma 6.2]{rentzschvoit2000}, to which we refer for further details.

Let $h \in \mathrm{C}^2[0,\infty)$. Given that $\mathcal{L} \phi_1 = -1$, an application of the chain rule shows that $\mathcal{L}(h(\phi_1))(x) = -{x^2 \over 4} h''(\phi_1(x))\, (\phi_1'(x))^2 - h'(\phi_1(x))$. Since by Itô's formula we have $d(h(\phi_1(X_t))) = h'(\phi_1(X_t))\, d\mskip 0.6\thinmuskip \phi_1(X_t)$ $+ {1 \over 2} h''(\phi_1(X_t))\, d[\phi_1(X)]_t$, we obtain
\begin{equation} \label{eq:iw_moment_localmartlemma_pf1}
d(h(\phi_1(X_t))) + \mathcal{L}(h(\phi_1(X_t)))\, dt = h''(\phi_1(X_t)) \Bigl(\tfrac{1}{2} d[\phi_1(X)]_t - \tfrac{1}{4}X_t^2(\phi_1'(X_t))^2 dt\Bigr) + dV_t^h
\end{equation}
where $\bigl\{V_t^h := \int_0^t h'(\phi_1(X_s)) (d\mskip 0.6\thinmuskip \phi_1(X_s) - ds)\bigr\}$ is a local martingale (because $d\mskip 0.6\thinmuskip \phi_1(X_t) - dt = d\mskip 0.4\thinmuskip Z\text{\large\mathstrut}_{X,t}^{-\mathcal{L},\phi_1}$ is the differential of a local martingale). If, in particular, $h$ is the function $h_0$ from Lemma \ref{lem:iw_moment_philemma}(b), then $\int_0^t d(h_0(\phi_1(X_s))) + \mathcal{L}(h_0(\phi_1(X_s)))\, ds = Z\text{\large\mathstrut}_{X,t}^{-\mathcal{L},\phi_2}$, and from \eqref{eq:iw_moment_localmartlemma_pf1} we find that
\[
\int_0^t  h_0''(\phi_1(X_s)) \Bigl(\tfrac{1}{2} d[\phi_1(X)]_s - \tfrac{1}{4}X_s^2 (\phi_1'(X_s))^2 ds\Bigr) \qquad \text{is a local martingale}.
\]
But, by standard results, $\int_0^t  h_0''(\phi_1(X_s)) \bigl(\tfrac{1}{2} d[\phi_1(X)]_s - \tfrac{1}{4}X_s^2 (\phi_1'(X_s))^2 ds\bigr)$ is also a process of locally finite variation, hence it is a.s.\ equal to zero. Consequently, taking into account that $h_0'' > 0$ (Lemma \ref{lem:iw_moment_philemma}(b)), we have $d[Z\text{\large\mathstrut}_{X}^{-\mathcal{L},\phi_1}]_t - \frac{1}{2} X_t^2 (\phi_1'(X_t))^2 dt = d[\phi_1(X)]_t - \frac{1}{2} X_t^2 (\phi_1'(X_t))^2 dt = 0$ a.s., proving the first assertion. 

The result just proved, combined with \eqref{eq:iw_moment_localmartlemma_pf1}, implies that $\bigl\{h(\phi_1(X_t)) + \int_0^t \mathcal{L}(h(\phi_1))(X_s) ds\bigr\}_{t \geq 0}$ is, for each $h \in \mathrm{C}^2[0,\infty)$, a local martingale. The second assertion thus follows from Lemma \ref{lem:iw_moment_philemma}(a).
\end{proof}

We are finally ready to establish the martingale characterization of the index Whittaker diffusion (which should be compared with \cite[Theorem 6.3]{rentzschvoit2000}):

\begin{theorem} \label{thm:iw_moment_diffusion_martchar}
Let $Y = \{Y_t\}_{t \geq 0}$ be a $[0,\infty)$-valued Markov process with a.s.\ continuous paths. The following assertions are equivalent:
\begin{enumerate}[itemsep=0pt,topsep=4pt]
\item[\textbf{(i)}] $Y$ is the index Whittaker diffusion;

\item[\textbf{(ii)}] $\{\phi_1(Y_t) - t\}_{t \geq 0}$ and $\{\phi_2(Y_t) - 2t \phi_1(Y_t) + t^2\}_{t \geq 0}$ are martingales (or local martingales);

\item[\textbf{(iii)}] $Z\text{\large\mathstrut}_{Y}^{-\mathcal{L},\phi_1}$ is a local martingale with $[Z\text{\large\mathstrut}_{Y}^{-\mathcal{L},\phi_1}]_t = \frac{1}{2} \int_0^t Y_s^2 (\phi_1'(Y_s))^{2\,} ds$.
\end{enumerate}
\end{theorem}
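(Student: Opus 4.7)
The plan is to establish the cyclic chain of implications (i)$\Rightarrow$(ii)$\Rightarrow$(iii)$\Rightarrow$(i).

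For (i)$\Rightarrow$(ii) I would simply specialise the concluding assertion of Proposition \ref{prop:iw_levy_momentmarting} to the $*$-moment pair $\{\phi_1,\phi_2\}$, which by construction corresponds to $\lambda_1=-1,\,\lambda_2=0$; this yields precisely the two martingales listed in (ii).

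For (ii)$\Rightarrow$(iii), observe first that $\mathcal{L}\phi_1=-1$ gives $Z_{Y,t}^{-\mathcal{L},\phi_1} = \phi_1(Y_t)-\phi_1(Y_0)-t$, so the first local-martingale condition in (ii) is equivalent to $Z_Y^{-\mathcal{L},\phi_1}$ being a local martingale. Integration by parts applied to $t\phi_1(Y_t)$ together with $d\phi_1(Y_t) = dt + dZ_{Y,t}^{-\mathcal{L},\phi_1}$ yields
\[
2\!\int_0^t\!\phi_1(Y_s)\,ds \,=\, 2t\phi_1(Y_t) - t^2 - 2\!\int_0^t\! s\, dZ_{Y,s}^{-\mathcal{L},\phi_1},
\]
and hence, using $\mathcal{L}\phi_2 = -2\phi_1$,
\[
Z_{Y,t}^{-\mathcal{L},\phi_2} \,=\, \bigl[\phi_2(Y_t) - 2t\phi_1(Y_t) + t^2\bigr] - \phi_2(Y_0) + 2\!\int_0^t\! s\, dZ_{Y,s}^{-\mathcal{L},\phi_1}.
\]
Since the last term is a stochastic integral against a local martingale, (ii) forces $Z_Y^{-\mathcal{L},\phi_2}$ to be a local martingale as well. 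The hypothesis of Lemma \ref{lem:iw_moment_martlemma} is thus satisfied, and its first conclusion is exactly the quadratic-variation identity of (iii).

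The main direction is (iii)$\Rightarrow$(i). Starting from (iii), the second conclusion of Lemma \ref{lem:iw_moment_martlemma} extends the local-martingale property to $Z_Y^{-\mathcal{L},f}$ for every $f \in \mathrm{C}^{2,4}[0,\infty)$ with $f'(0)=f'''(0)=0$. The crucial choice is $f = \bm{W}_{\!\alpha,\Delta_\lambda}$ for each $\lambda\geq 0$: this function is $\mathrm{C}^\infty[0,\infty)$ with vanishing odd derivatives at zero (by \eqref{eq:bW_limderivatives}) and is bounded by $1$ (by \eqref{eq:bW_ineq_param}). Since $-\mathcal{L}\bm{W}_{\!\alpha,\Delta_\lambda} = -\lambda\bm{W}_{\!\alpha,\Delta_\lambda}$, the process
\[
Z_{Y,t}^{-\mathcal{L},\bm{W}_{\!\alpha,\Delta_\lambda}} \,=\, \bm{W}_{\!\alpha,\Delta_\lambda}(Y_t) - \bm{W}_{\!\alpha,\Delta_\lambda}(Y_0) + \lambda\!\int_0^t\!\bm{W}_{\!\alpha,\Delta_\lambda}(Y_s)\, ds
\]
is a local martingale which is uniformly bounded on every bounded time interval, hence a genuine martingale. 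The equivalence (i)$\Leftrightarrow$(iii) in Proposition \ref{prop:iw_levy_equivchar} then identifies $Y$ as a $*$-Lévy process whose associated $*$-convolution semigroup has $*$-exponent $\psi(\lambda)=\lambda$; by Example \ref{exam:iw_diffusion} this is precisely the semigroup of the index Whittaker diffusion, and the identification follows.

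The principal obstacle is the last implication: one must verify that the candidate test functions $\bm{W}_{\!\alpha,\Delta_\lambda}$ lie in the class of admissible $f$ for Lemma \ref{lem:iw_moment_martlemma} (the $\mathrm{C}^{2,4}$ regularity and the conditions $f'(0)=f'''(0)=0$ are both delivered by the boundary expansion \eqref{eq:bW_limderivatives}), and then upgrade the resulting local martingales to honest martingales so that the characterisation in Proposition \ref{prop:iw_levy_equivchar} may legitimately be invoked.
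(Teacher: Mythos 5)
Your proof is correct and follows essentially the same route as the paper: Proposition \ref{prop:iw_levy_momentmarting} for (i)$\Rightarrow$(ii), the integration-by-parts identity relating $Z_Y^{-\mathcal{L},\phi_2}$ to $\phi_2(Y_t)-2t\phi_1(Y_t)+t^2$ followed by Lemma \ref{lem:iw_moment_martlemma} for (ii)$\Rightarrow$(iii), and the passage through $f=\bm{W}_{\!\alpha,\Delta_\lambda}$, boundedness, and Proposition \ref{prop:iw_levy_equivchar} for (iii)$\Rightarrow$(i). One point to tighten: in (iii)$\Rightarrow$(i) you cite the second conclusion of Lemma \ref{lem:iw_moment_martlemma}, but its stated hypotheses require $Z_Y^{-\mathcal{L},\phi_2}$ to be a local martingale, which assertion (iii) does not supply; the correct move (and the one the paper makes by invoking ``the proof of'' the lemma rather than its statement) is to re-run the argument via \eqref{eq:iw_moment_localmartlemma_pf1}, noting that the assumed quadratic-variation identity makes the term $h''(\phi_1(Y_t))\bigl(\tfrac{1}{2}d[\phi_1(Y)]_t-\tfrac{1}{4}Y_t^2(\phi_1'(Y_t))^2\,dt\bigr)$ vanish identically, so that $Z_Y^{-\mathcal{L},h\circ\phi_1}$ is a local martingale for every $h\in\mathrm{C}^2[0,\infty)$ without any appeal to $\phi_2$.
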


\begin{proof}
\textbf{(i) $\!\implies\!$ (ii):} This follows from Proposition \ref{prop:iw_levy_momentmarting}. \\[-10pt]

\textbf{(ii) $\!\implies\!$ (iii):} Assume that (ii) is true. Since $d\mskip 0.4\thinmuskip Z\text{\large\mathstrut}_{Y,t}^{-\mathcal{L},\phi_1} = d\phi_1(Y_t) - dt$, the process $Z\text{\large\mathstrut}_{Y}^{-\mathcal{L},\phi_1}$ is a local martingale. Furthermore,
\[
d\mskip 0.4\thinmuskip Z\text{\large\mathstrut}_{Y,t}^{-\mathcal{L},\phi_2} = \, d\phi_2(Y_t) - 2\phi_1(Y_t) dt \: = \: d\bigl(\phi_2(Y_t) - 2t\phi_1(Y_t) + t^2 \bigr) +  2t\bigl(d\phi_1(Y_t) - dt\bigr)
\]
(where integration by parts gives the second equality) and therefore the process $Z\text{\large\mathstrut}_{Y}^{-\mathcal{L},\phi_2}$ is also a local martingale. By Lemma \ref{lem:iw_moment_martlemma}, $[Z\text{\large\mathstrut}_{Y}^{-\mathcal{L},\phi_1}]_t = \frac{1}{2} \int_0^t Y_s^2 (\phi_1'(Y_s))^{2\,} ds$.\\[-10pt]

\textbf{(iii) $\!\implies\!$ (i):} Assuming that (iii) holds, Equation \eqref{eq:iw_moment_localmartlemma_pf1} and the proof of Lemma \ref{lem:iw_moment_martlemma} show that, for each $\lambda \geq 0$,  $Z\text{\large\mathstrut}_{X}^{-\mathcal{L},\raisebox{.5pt}{\scalebox{0.75}{\footnotesize$\bm{W}_{\!\alpha,\Delta_{\lambda\!}}(\cdot)$}}}$ is a local martingale and (by boundedness on compact time intervals) a true martingale. Proposition \ref{prop:iw_levy_equivchar} now yields that $Y$ is the index Whittaker diffusion.
\end{proof}

\begin{remark}
In this article we have focused on continuous-time stochastic processes which are additive with respect to the index Whittaker convolution. In a similar way, one can introduce the discrete-time counterparts of the processes studied above. 

A $[0,\infty)$-valued Markov chain $\{S_n\}_{n \in \mathbb{N}_0}$ with $S_0 = 0$ is said to be \emph{$*$-additive} if there exist measures $\mu_n \in \mathcal{P}[0,\infty)$ such that \begin{equation} \label{eq:iw_markovadditive_def}
P[S_n \in B | S_{n-1} = x] = (\mu_n * \delta_x)(B), \qquad\;\; n \in \mathbb{N}, \: x \geq 0, \: B \text{ a Borel subset of } [0,\infty).
\end{equation}
If $\mu_n = \mu$ for all $n$, then $\{S_n\}$ is said to be a \emph{$*$-random walk}. 

It is important to note that $*$-additive Markov chains can be constructed explicitly: letting $X_1$, $U_1$, $X_2$, $U_2$, $\ldots$ be a sequence of independent random variables (on a given probability space $(\Omega, \mathfrak{A}, P)$) where the $X_n$ have distribution $P_{X_n} = \mu_n$ and each of the (auxiliary) random variables $U_n$ has the uniform distribution on $[0,1)$, we obtain a $*$-additive Markov chain satisfying \eqref{eq:iw_markovadditive_def} by setting $S_0 = 0$ and $S_n = S_{n-1} \oplus_{\mathsmaller{U_n}} X_n$, where $X \oplus_\mathsmaller{U} Y := \max\bigl( 0, \sup\{ z \in [0,\infty): (\mathcal{T}^Y\mathds{1}_{[0,z]})(X) < U \} \bigr)$. The fact that  $X \oplus_\mathsmaller{U} Y$ is well-defined as a random variable and that its distribution is $P_{X \oplus_\mathsmaller{U} Y} = P_X * P_Y$ is justified as in \cite[Theorem 7.1.3 and Proposition 7.1.6]{bloomheyer1995}.

In the context of hypergroups, moment functions have been successfully applied to the study of the limiting behavior of additive Markov chains (cf.\ \cite[Chapter 7]{bloomheyer1995}). Parallel results hold for the index Whittaker convolution. For instance, letting $\{S_n\}$ be a $*$-additive Markov chain constructed as above, the following strong laws of large numbers are established as in \cite[Theorems 7.3.21 and 7.3.24]{bloomheyer1995}: \\[-9pt]

\emph{\textbf{(a)} If $\{r_n\}_{n \in \mathbb{N}}$ is a sequence of positive numbers such that $\lim_n r_n = \infty$ and $\sum_{n=1}^\infty {1 \over r_n} \bigl(\mathbb{E}[\widetilde{\varphi}_2(X_n)] - \mathbb{E}[\widetilde{\varphi}_1(X_n)]^2\bigr) < \infty$, then
\[
\lim_n {1 \over \sqrt{r_n}} \bigl( \widetilde{\varphi}_1(S_n) - \mathbb{E}[\widetilde{\varphi}_1(S_n)] \bigr) = 0 \qquad\;\; P\text{-a.s.}
\]}

\emph{\textbf{(b)} If $\{S_n\}$ is a $*$-random walk such that $\mathbb{E}[\widetilde{\varphi}_2(X_1)^{\theta/2}] < \infty$ for some $1 \leq \theta < 2$, then $\mathbb{E}[\widetilde{\varphi}_1(X_1)] < \infty$ and
\[
\lim_n {1 \over n^{1 / \theta}} \bigl(\widetilde{\varphi}_1(S_n) - n\mathbb{E}[\widetilde{\varphi}_1(X_1)]\bigr) = 0 \qquad\;\; P\text{-a.s.}
\]}
\end{remark}

\section*{Acknowledgments}

The first and third authors were partly supported by CMUP (UID/MAT/00144/2013), which is funded by Fundação para a Ciência e a Tecnologia (FCT) (Portugal) with national (MEC), European structural funds through the programmes FEDER under the partnership agreement PT2020, and Project STRIDE -- NORTE-01-0145-FEDER-000033, funded by ERDF -- NORTE 2020. The first author was also supported by the grant PD/BD/135281/2017, under the FCT PhD Programme UC|UP MATH PhD Program. The second author was partly supported by FCT/MEC through the project CEMAPRE -- UID/MULTI/00491/2013.


\end{document}